\theoremstyle{plain}
\newtheorem{theorem}{Theorem}[section]
\newtheorem{proposition}[theorem]{Proposition}
\newtheorem{lemma}[theorem]{Lemma}
\newtheorem{remark}[theorem]{Remark}
\newtheorem{corollary}[theorem]{Corollary}
\theoremstyle{definition}
\newtheorem{definition}[theorem]{Definition}
\newcommand{\nc}{\newcommand}
\nc{\dmo}{\DeclareMathOperator}
\nc{\Q}{\mathbb{Q}}
\nc{\R}{\mathbb{R}}
\nc{\RP}{\mathbb{RP}^1}
\nc{\Z}{\mathbb{Z}}
\nc{\ZZ}{\mathbb{Z}}
\nc{\C}{\mathbb{C}}
\nc{\cS}{\mathcal{S}}
\nc{\iso}{\cong}
\dmo{\Mod}{Mod}
\dmo{\Ig}{\mathcal{I}_g}
\dmo{\Span}{span}
\dmo{\Diff}{Diff}
\dmo{\Homeo}{Homeo}
\dmo{\dist}{dist}
\dmo\BDiff{BDiff}
\dmo\SO{SO}
\dmo\slide{sl}
\dmo\im{im}
\dmo\id{id}
\dmo\Fix{Fix}
\dmo\Stab{Stab}
\dmo\Mcg{Mcg}
\dmo{\Hg}{\mathcal{H}_g}
\dmo{\Tg}{\mathcal{T}_g}
\renewcommand{\epsilon}{\varepsilon}
\nc{\coloneq}{\mathrel{\mathop:}\mkern-1.2mu=}
\nc{\margin}[1]{\marginpar{\scriptsize #1}}
\nc{\para}[1]{\bigskip\noindent\textbf{#1}}
\begin{document}
\title{(Un)distorted stabilisers in the handlebody group}
\author{Sebastian Hensel}
\date{\today}
\begin{abstract}
  We study geometric properties of stabilisers in the handlebody
  group. We find that stabilisers of meridians are
  undistorted, while stabilisers of primitive curves or annuli are exponentially
  distorted for large enough genus. 
\end{abstract}
\maketitle

\section{Introduction}
\label{sec:intro}
The handlebody group $\mathrm{Mcg}(V)$ is the mapping class group of a $3$--di\-men\-sional handlebody $V$. In this
article, we study the subgroup geometry of stabilisers in $\mathrm{Mcg}(V)$ of 
meridians and primitive curves. 
A curve $\delta$ on the boundary of a handlebody is called a \emph{meridian}, if it is the boundary of an embedded disc. A curve $\beta$ is called \emph{primitive}, if there is a meridian $\delta$ which intersects $\beta$ in a single point. 

Recall that a finitely generated subgroup $H < G$ of a finitely generated group $G$ is
\emph{undistorted} if the inclusion homomorphism is a quasi-isometric embedding. In contrast, we say
that it is \emph{exponentially distorted}, if the word norm in $H$ can be bounded by an exponential function of word norm in $G$, and there is no such bound of sub-exponential growth type. We refer the reader to e.g. \cite{Farb} for details on distortion functions of subgroups.


Our main results are:
\begin{theorem}\label{thm:intro-undistorted}
Suppose that $V$ is a handlebody or compression body, and that $\delta$ is a (multi)meridian in $V$. Then the stabiliser of $\delta$ 
is undistorted in $\mathrm{Mcg}(V)$.
\end{theorem}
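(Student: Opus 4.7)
The plan is to establish undistortion by constructing a coarse Lipschitz retraction from $\Mcg(V)$ onto $\Stab_{\Mcg(V)}(\delta)$, which is a standard criterion for undistortion. The construction proceeds by induction on the complexity of $V$, reducing to simpler compression bodies obtained by cutting along a disc.

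I would first reduce to the case of a single meridian. For a multimeridian $\delta = \delta_1 \cup \cdots \cup \delta_k$, cut $V$ along the disc bounded by $\delta_1$ to obtain a compression body $V_1$ of smaller complexity in which $\delta_2, \ldots, \delta_k$ persist as meridians. Then $\Stab_{\Mcg(V)}(\delta)$ is, up to finite index and a central Dehn twist $T_{\delta_1}$, the stabiliser of $\delta_2 \cup \cdots \cup \delta_k$ inside $\Mcg(V_1)$. Inducting on $k$ and on complexity, it suffices to prove the theorem for a single meridian. So assume $\delta$ is a single meridian bounding a disc $D \subset V$, and let $V'$ denote the compression body obtained by cutting $V$ open along $D$. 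The natural homomorphism $\iota\colon \Mcg(V') \to \Mcg(V)$, obtained by extending diffeomorphisms across the two disc scars, has image of finite index in $\Stab_{\Mcg(V)}(\delta)$ modulo the Dehn twist $T_\delta$. Since $\langle T_\delta \rangle$ is an undistorted infinite cyclic subgroup of $\Mcg(V)$ (via the coarse Lipschitz projection from $\Mcg(V)$ into the annular curve graph around $\delta$, pulled back from the ambient mapping class group $\Mcg(\partial V)$), the theorem reduces to showing that $\iota(\Mcg(V'))$ is undistorted in $\Mcg(V)$.

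The core step, and the main obstacle, is to construct a coarse Lipschitz retraction $\rho\colon \Mcg(V) \to \iota(\Mcg(V'))$. My approach would be to fix a reference marking $\mu$ on $\partial V$ adapted to a disc system containing $D$, and given $\phi \in \Mcg(V)$, perform a surgery on $\phi(\mu)$ along $D$ to replace it with a marking compatible with $D$, from which one reads off an element of $\Mcg(V')$. The essential estimate is that each generator of $\Mcg(V)$ alters the projected marking by a bounded amount, so that $\rho$ is coarsely Lipschitz in the word metric of $\Mcg(V)$. This amounts to establishing a handlebody analogue of the Masur--Minsky subsurface projection bounds restricted to the disc-disjoint subsurfaces of $\partial V \setminus \delta$; alternatively, a direct inductive argument on the genus, tracking how a word in $\Mcg(V)$ acts on a chosen disc system and applying controlled disc surgery after each generator, should suffice. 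The difficulty lies precisely in controlling the surgery cost, since the ambient inclusion $\Mcg(V) \hookrightarrow \Mcg(\partial V)$ is itself exponentially distorted, and so undistortion of the surface-level stabiliser cannot be transferred directly to the handlebody setting.
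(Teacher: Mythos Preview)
Your overall architecture---build a coarse Lipschitz retraction from $\Mcg(V)$ onto the stabiliser by surgering markings towards $\delta$---is the same as the paper's, and your reduction to a single meridian together with the undistortion of $\langle T_\delta\rangle$ via annular projection is fine. But the proposal stops exactly at the point where the real work begins: you write that ``a direct inductive argument \ldots\ applying controlled disc surgery after each generator, should suffice'', and that one needs ``a handlebody analogue of the Masur--Minsky subsurface projection bounds''. Neither of these is supplied, and neither is routine. As you yourself note (and as the paper stresses), ordinary subsurface projections of meridians to $\partial V\setminus\delta$ are not meridians, so Masur--Minsky does not give you a projection landing in the disc graph of the cut-open compression body. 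Without that, there is no retraction to speak of.

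What the paper actually does to fill this gap is substantial and not indicated in your proposal. It introduces \emph{patterns}: combinatorial records of a surgery sequence from a cut system $C$ to one disjoint from $\delta$, encoded as nested intervals on $\delta^\pm$. The two key ideas are (i) for disjoint cut systems $C,C'$ one can choose \emph{compatible} patterns (nested/disjoint intervals), so the projected systems $C(\mathcal{I})$ and $C'(\mathcal{I}')$ stay disjoint; and (ii) a \emph{wave-exchange} move on patterns, which reduces length and terminates in an \emph{essential} pattern where no nested pair of intervals is $V$--trivial. Essentiality is what forces the crucial bound (Corollary~3.14): the nesting depth of intervals in an essential pattern that are not separated by intervals of a compatible pattern is uniformly bounded. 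This is precisely the ``bounded diameter of projection'' statement you need, and it does not follow from any surface-level projection estimate. Your outline gives no mechanism to obtain such a bound, and absent it the surgery cost after a single generator could a priori blow up---which is exactly the phenomenon behind the exponential distortion of $\Mcg(V)$ in $\Mcg(\partial V)$ that you flag at the end.
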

\begin{theorem}\label{thm:intro-distorted}
Let $V$ be a handlebody of genus $g$.
\begin{enumerate}[i)]
\item Suppose that $\alpha$ is a primitive curve and $g \geq 3$. Then the stabiliser of $\alpha$ is exponentially distorted in $\mathrm{Mcg}(V)$.

\item Suppose $A \subset V$ is a properly embedded annulus so that $\partial A$ consists of primitive curves. Assume that $g\geq 3$ (if $A$ is non-separating) or $g\geq 4$ (if $A$ is separating). Then the stabiliser of $A$ is
exponentially distorted in $\mathrm{Mcg}(V)$.
\end{enumerate}
\end{theorem}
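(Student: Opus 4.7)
The plan is to establish exponential distortion by separately proving a matching upper and lower bound. For the upper bound, one exploits general structural features of the handlebody group: $\Mcg(V)$ admits at worst an exponential Dehn function (inherited via its action on a suitable simplicial model such as the disc graph, or directly through an automatic-type structure), which forces all finitely generated subgroups to have at most exponential distortion. The substantive content of the theorem is therefore the lower bound.

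For case (i), I would produce a sequence $h_n \in \Stab(\alpha)$ with $|h_n|_{\Mcg(V)} = O(n)$ but $|h_n|_{\Stab(\alpha)} \geq \lambda^n$ for some $\lambda > 1$. The guiding intuition is that $\Mcg(V)$ has access to handle slides moving $\alpha$ off itself, while $\Stab(\alpha)$ must preserve $\alpha$: a short excursion into the sliding regime of $\Mcg(V)$ can only be matched inside $\Stab(\alpha)$ by an exponentially longer word. Concretely, I would fix a dual meridian $\delta$ and, using the hypothesis $g \geq 3$, choose auxiliary disjoint meridians $\mu, \mu'$ supporting a partial pseudo-Anosov-like element $\sigma \in \Mcg(V)$ of bounded word length that moves $\alpha$ in a controlled fashion. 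Using $\sigma$ together with $T_\alpha \in \Stab(\alpha)$, I would build commutator or nested-conjugation words whose net effect fixes $\alpha$---so they lie in $\Stab(\alpha)$---but whose image under a $1$-Lipschitz $\Stab(\alpha)$-invariant (a signed twist count about $\alpha$, or displacement in the annular curve graph of $\alpha$) grows exponentially with $n$. The Lipschitz property of the invariant then yields the desired lower bound on $|h_n|_{\Stab(\alpha)}$.

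For case (ii), I would adapt the construction, replacing $\alpha$ by the annulus $A$ and $T_\alpha$ by the annular Dehn twist $T_A$. The genus hypotheses reflect the space needed outside $A$ for the auxiliary slide: cutting along a non-separating $A$ leaves a piece of genus $g-1$, demanding $g \geq 3$, whereas a separating $A$ partitions $V$ into two smaller handlebodies, and one needs $g \geq 4$ to guarantee at least one piece is complicated enough to support the slide construction.

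The main obstacle I foresee is simultaneously arranging that the $h_n$ (a) lie in the stabiliser, (b) have linear word length in $\Mcg(V)$, and (c) realise exponentially large values of a computable $\Stab$-invariant. Meeting all three constraints requires a careful combinatorial choice of the auxiliary slide $\sigma$ together with precise control of how handle slides interact with the annular cover of $\alpha$ (resp.\ $A$). It is precisely the availability of $3$-dimensional slides disturbing the primitive curve or annulus that makes distortion possible, distinguishing the handlebody setting from the surface mapping class group, in which the analogous stabilisers are known to be undistorted.
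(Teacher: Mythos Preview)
Your proposal has several genuine gaps.

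\textbf{Upper bound.} The claim that an exponential Dehn function forces all finitely generated subgroups to be at most exponentially distorted is false in general; Dehn functions control the filling problem, not subgroup distortion. The paper obtains the upper bound by a concrete surgery argument: intersection numbers of cut systems grow at most exponentially in handlebody word length (Proposition~\ref{prop:intersection-growth}), and one can connect two cut systems that both meet $\partial A$ minimally by a surgery path in $\mathcal{R}(A)$ whose length is polynomial in those intersection numbers (Corollary~\ref{cor:surgery-length-bound}, Lemmas~\ref{lem:repair-ropes} and~\ref{lem:surgery-step}). This also establishes finite generation of the stabiliser, which you assume without comment.

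\textbf{Lower bound.} Two problems. First, for a primitive curve $\alpha$ the Dehn twist $T_\alpha$ is \emph{not} in $\Mcg(V)$, so it cannot serve as your basic building block. Second, and more fundamentally, the invariants you propose (signed twist count, displacement in the annular curve graph of $\alpha$) are Lipschitz already for the surface mapping class group word metric. Since the inclusion $\Mcg(V)\hookrightarrow\Mcg(\partial V)$ is Lipschitz, any $h_n$ with $|h_n|_{\Mcg(V)}=O(n)$ also has $|h_n|_{\Mcg(\partial V)}=O(n)$, so these invariants are $O(n)$ on your sequence and can never detect exponential growth. Indeed, curve stabilisers in $\Mcg(\partial V)$ are undistorted, so no surface-level invariant can work.

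The paper's mechanism is different: it uses the Lipschitz homomorphism $\Stab_{\Mcg(V)}(\alpha)\to\Stab_{\mathrm{Out}(F_g)}([e_1])$ and invokes the Handel--Mosher theorem that certain explicit automorphisms (Theorem~\ref{thm:hm-distortion}) have exponentially growing norm in $\Stab_{\mathrm{Out}(F_g)}([e_1])$. The elements $f_k=\psi^k P\psi^{-k}$, with $\psi$ a pseudo-Anosov on a genus--one subsurface and $P$ a handle slide, realise these automorphisms, have linear length in $\Mcg(V)$, and fix $\alpha$ (and suitable annuli). The genus hypothesis $g\ge 3$ enters because Handel--Mosher requires rank $\ge 3$; it is not about ``room for slides'' in the sense you describe.
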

To put these theorems into context, observe that the handlebody group is directly related to mapping class groups of surfaces (via restriction of homeomorphisms to the boundary) and to the outer automorphism group of free groups (via the action on the fundamental group). 
However, neither of these connections is immediately useful to study the geometry of $\mathrm{Mcg}(V)$: The inclusion $\mathrm{Mcg}(V)\to \mathrm{Mcg}(\partial V)$ may distort distances exponentially \cite{HH1}, and the kernel
of the map $\mathrm{Mcg}(V)\to\mathrm{Out}(\pi_1(V))$ has an infinitely generated kernel \cite{Luft, McCullough}.
In other words, there is no a-priori reason to expect that $\mathrm{Mcg}(V)$ shares geometric features with
surface mapping class groups or outer automorphism groups of free groups.

It seems that its geometry, nevertheless, resembles that of outer automorphism groups of free groups.
A first instance of this was the computation of its Dehn function in \cite{HH2}: these are exponential for handlebody groups of genus at least three, just like those of $\mathrm{Out}(F_n)$ for $n\geq 3$.

The two main results of this article provide further evidence for this philosophy. In the
surface mapping class group $\mathrm{Mcg}(\partial V)$, stabilisers of curves are undistorted for all curves (this follows e.g. immediately from the distance formula of Masur-Minsky \cite{MM2}). 
On the other hand, Handel and Mosher \cite{HM} found that in the outer automorphism groups of
free groups there is a dichotomy -- stabilisers of free splittings are undistorted,
whereas stabilisers of primitive conjugacy classes (and most other free factors) are exponentially distorted.
By the van-Kampen theorem, the stabiliser of a meridian maps exactly to the stabiliser of a free splitting, while the stabiliser of a primitive curve maps to the stabiliser of a primitive conjugacy class.

\smallskip A properly embedded annulus in the handlebody induces, by Seifert-van Kampen, a splitting
of $\pi_1(V)$, amalgamated about a cyclic subgroup generated by a primitive element of $\pi_1(V)$.
We call such a splitting a \emph{primitive cyclic splitting}. Hence,
the stabiliser of an annulus in the handlebody
group maps to the stabiliser of a primitive cyclic splitting 
in $\mathrm{Out}(F_n)$. Here, 
an exponential lower bound for distortion follows from the results
in \cite{HM} (although it is not explicitly discussed in that reference).
We extend this by a similar upper bound, which
shows that it has the same behaviour as in the handlebody group case.
\begin{proposition}\label{prop:intro-cyclic-splitting}
	For $n\geq 4$, the stabiliser of a primitive cyclic splitting in $\mathrm{Out}(F_n)$ is exponentially distorted in $\mathrm{Out}(F_n)$.
\end{proposition}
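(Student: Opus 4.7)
For the exponential lower bound, I would adapt the constructions in \cite{HM}: their examples of exponentially distorted automorphisms in primitive conjugacy class stabilisers translate, after replacing the primitive element by a Dehn twist about the cyclic edge group, into examples of exponentially distorted automorphisms in the cyclic splitting stabiliser.

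For the upper bound, my plan is to transfer Theorem~\ref{thm:intro-distorted}(ii) to $\mathrm{Out}(F_n)$ via the natural surjection $\pi\colon\Mcg(V)\to\mathrm{Out}(F_n)$, where $V$ is a handlebody of genus $n$. I would first realise the cyclic splitting $T$ of $\pi_1(V)=F_n$ by an essential properly embedded annulus $A\subset V$ with primitive boundary; this is a standard $3$--manifold construction. The stabiliser $\Stab(A)<\Mcg(V)$ maps into $\Stab(T)<\mathrm{Out}(F_n)$ under $\pi$, and this restricted map is surjective because any lift in $\Mcg(V)$ of a splitting-preserving automorphism takes $A$ to another annulus representing $T$, and hence can be adjusted by an element of $\ker\pi$ (generated by meridian Dehn twists, after \cite{Luft,McCullough}) to set-wise preserve $A$.

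The heart of the argument is then a coarsely Lipschitz lifting statement: for every $\phi\in\Stab(T)$ with $|\phi|_{\mathrm{Out}(F_n)}=N$, there should exist $\tilde\phi\in\Stab(A)$ with $\pi(\tilde\phi)=\phi$ and $|\tilde\phi|_{\Mcg(V)}=O(N)$. Given such a lift, Theorem~\ref{thm:intro-distorted}(ii) expresses $\tilde\phi$ as a product of $\exp(O(N))$ generators of $\Stab(A)$; each of those generators projects under $\pi$ to a bounded-complexity element of $\Stab(T)$, yielding the desired $\exp(O(N))$-factorisation of $\phi$ in $\Stab(T)$. To build the lift, I would begin by lifting a geodesic word $\phi=\gamma_N\cdots\gamma_1$ in $\mathrm{Out}(F_n)$ letter-by-letter to $\Mcg(V)$: standard Nielsen generators lift to bounded-complexity handle slides, producing some $\tilde\phi_0\in\Mcg(V)$ of word length $O(N)$ with $\pi(\tilde\phi_0)=\phi$. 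The element $\tilde\phi_0$ need not itself preserve $A$ and must be corrected by an element $K\in\ker\pi$.

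The main obstacle is controlling the word length of this correction $K$. A priori, $\tilde\phi_0$ could take $\partial A$ to a curve of exponential combinatorial complexity on $\partial V$, so a naive correction applied only at the end would blow up exponentially in $N$, yielding merely a doubly-exponential bound on $|\phi|_{\Stab(T)}$. I would address this by correcting inductively rather than only at the end: at each stage $i$, choose the lift of $\gamma_i$ from among its handle-slide representatives so that the partial image of $\partial A$ remains of bounded combinatorial complexity on $\partial V$, exploiting the rigidity of essential annuli with primitive boundary in a handlebody to ensure the required bounded-complexity handle slide always exists. This controlled, step-by-step construction is the key technical point and the main place where $3$--manifold topology enters; once it is in place, the exponential upper bound follows directly from Theorem~\ref{thm:intro-distorted}(ii).
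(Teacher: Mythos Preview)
Your lower bound sketch is in line with the paper: the exponential lower bound does come from the Handel--Mosher examples, and the paper's only addition is a concrete topological realisation via the doubled handlebody.

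The upper bound strategy, however, has a genuine gap at exactly the point you flag as ``the main obstacle.'' The coarsely Lipschitz lift $\tilde\phi\in\Stab(A)$ with $|\tilde\phi|_{\Mcg(V)}=O(N)$ cannot be produced by your inductive correction scheme. The partial products $\gamma_i\cdots\gamma_1\in\mathrm{Out}(F_n)$ do not preserve the cyclic splitting, so the conjugacy class represented by $\partial A$ after applying them can have word length growing exponentially in $i$. Any curve on $\partial V$ representing that class must have at least that many intersections with a fixed cut system, so no choice of lift---and no correction from $\ker\pi$, which only changes the curve within its free homotopy class---can keep the partial image of $\partial A$ of bounded combinatorial complexity. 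The appeal to ``rigidity of essential annuli with primitive boundary'' does not help here, since rigidity constrains annuli up to isotopy, not the complexity of the conjugacy class they bound. In short, the infinitely generated kernel of $\Mcg(V)\to\mathrm{Out}(F_n)$ is a real obstruction, not just a technicality.

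The paper sidesteps this entirely by working not in $V$ but in the doubled handlebody $W=\#_n S^1\times S^2$. The key point is Laudenbach's theorem that $\mathrm{Mcg}(W)\to\mathrm{Out}(F_n)$ has \emph{finite} kernel, so it is a quasi-isometry and no lifting problem arises. The annulus $A$ doubles to a torus $T\subset W$, the cyclic splitting corresponds to $T$, and the paper proves the exponential upper bound directly in $\mathrm{Mcg}(W)$ by redoing the surgery argument with sphere systems in place of disc systems (Corollary~\ref{cor:sphere-surgery-length-bound} and the subsequent intersection-growth lemma). This is a genuinely different route from transferring Theorem~\ref{thm:intro-distorted}(ii), and it is chosen precisely because the map from $\Mcg(W)$, unlike that from $\Mcg(V)$, is geometrically well-behaved.
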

%

\medskip To prove Theorem~\ref{thm:intro-undistorted}, we define a projection of meridian systems of $V$ to
meridian systems in sub-handlebodies. This is carried out in Section~\ref{sec:good-stabs}. We recall the well-known
fact that usual Masur-Minsky subsurface projections of meridians to sub-handlebodies are usually not meridians
again (compare e.g. \cite[Section~10]{H-Survey}), and so our projection procedure is more involved and depends on choices. The lower bounds in Theorem~\ref{thm:intro-distorted}
are proved by a reduction to the theorem by Handel-Mosher on stabilisers in the outer automorphism group of free groups. Here, the key difficulty is to realise certain free group automorphisms considered by Handel-Mosher as
homeomorphisms of handlebodies with comparable word norm. This uses an idea which was already employed in \cite{HH2}. 
The upper distortion bounds in Theorem~\ref{thm:intro-distorted} follow from a surgery procedure.

\medskip
\textbf{Acknowledgements} 
The author would like to thank Ursula Hamenst\"adt, Andy Putman, Saul Schleimer, and Ric Wade for interesting discussions
about stabilisers in different contexts. The author would also like the anonymous referees for numerous helpful commets. 
Finally, he would like to thank the mathematical institute of the University of Oxford, where part of this work was carried out. 

\section{Preliminaries}
\label{sec:prereqs}
In this section we collect some basic tools on surfaces, handlebodies, and compression bodies that
we will use throughout. 

\subsection{Surface Basics}
Suppose that $S$ is a surface. All \emph{curves} are assumed to be simple and essential.
We usually identify a curve with the isotopy class it defines. If $\alpha, \beta$ are two curves,
we denote by $i(\alpha, \beta)$ the \emph{geometric intersection number}, i.e. the smallest number of
intersections between $\alpha, \beta$ up to isotopy.

A \emph{subsurface} will always mean a (possibly disconnected) two-dimensional submanifold with boundary,
no component of which is an annulus. Unless specified explicitly, we will assume that all curves and 
subsurfaces are in minimal position with respect to each other; compare \cite{Primer}. 
If $\alpha$ is a curve and $Y$ is
a subsurface, then we call the components of $Y \cap \alpha$ the \emph{$\alpha$-arcs with
respect to $Y$}.

If $\delta$ is a multicurve, then we denote (by slight abuse of notation) by $S-\delta$ the complementary subsurface of $\delta$, i.e. the metric closure of $S\setminus\delta$ with respect to the path metric. Explicitly, this means
that $S-\delta$ has boundary components corresponding to the sides of the curves in $\delta$. Equivalently, one
can think of $S-\delta$ as the complement of a small open, regular neighbourhood of $\delta$.

We will often call the components of the intersection $\alpha\cap(S-\delta)$ which are not closed curves the \emph{$\alpha$-arcs with respect to $\delta$}. 

If $\alpha$ is a curve which intersects $\delta$ transversely, then we denote by
\[ \pi_{S-\delta}(\alpha) \] 
the \emph{subsurface projection}, which we define to be a maximal subset of non-homotopic arcs in the
set $(S-\delta)\cap \alpha$ of $\alpha$-arcs of $\delta$.
We define $\pi_{S-\delta}(\alpha) = \alpha$ if $\delta,\alpha$ are disjoint.

If $A$ is a multicurve, the projection $\pi_{S-\delta}(A)$ is defined to be the union of the projections
of components:
\[\pi_{S-\delta}(A) = \bigcup_{\alpha\in A}\pi_{S-\delta}(\alpha).\]
We remark that the subsurface projection of a multicurve in general is a union of (isotopy classes) of 
curves and proper arcs.

\subsection{Handlebodies and Surgeries}
A \emph{handlebody} is the $3$-manifold obtained by attaching three-dimensional one-handles to a $3$--ball. A \emph{compression body} is the
$3$--manifold obtained from a handlebody by taking boundary connect sums with a finite number of
trivial interval bundles over surfaces. A compression body has an \emph{outer boundary component}, which
is the unique boundary component of highest genus.

Suppose that $V$ is a handlebody or compression body. A \emph{meridian} is a simple closed
curve on $\partial V$ (resp. the outer boundary component of the compression body) which bounds a disc in $V$. A \emph{filling meridian system} is
a collection $\Delta = \{\delta_1, \ldots, \delta_k\}$ of disjoint, non-isotopic meridians, bounding disjoint
discs $D_i$ with the property that each component of $V-\cup D_i$ is either a $3$--ball or
a trivial interval bundle. Note that as handlebodies and compression bodies are aspherical, the disjointness
of the discs $D_i$ can always be arranged, if we suppose that the $\delta_i$ are disjoint.
 We remark that the number of components of a filling meridian system is non-unique, but
bounded above by $3g-3$. 

For handlebodies a filling meridian system contains at least $g$ curves.
For a handlebody $V$, we also use the notion of \emph{cut system}, by which we mean a filling meridian system
with the smallest number of elements. Equivalently, it is a filling meridian system with a single complementary
component (which is then necessarily a $2g$--holed sphere).

Next, we describe \emph{surgery}. Namely, we recall the following standard result 
(compare e.g. the proof of Theorem~5.3 in \cite{McC-geom-finite}).
\begin{lemma}\label{lem:surgery-basics}
	Suppose that $\Delta$ is a filling meridian system, and that $\alpha$ is any meridian
	which is not disjoint from $\Delta$.
	Then there is a subarc $a \subset \alpha$, called a \emph{wave (of $\alpha$ with respect to $\delta$)}, with the following
	properties:
	\begin{enumerate}[i)]
		\item $a \cap \Delta$ consists of two points on the same curve $\delta \in \Delta$. Call the components of $\delta-a = \delta_- \cup \delta_+$. Then the set
			\[ \{ a\cup\delta_-, a\cup\delta_+\}\cup \Delta\setminus\{\delta\} \]
		is a filling meridian system.
		\item If $V$ is a handlebody, then there is $\delta_*=\delta_\pm$ so that
			\[ \{ a\cup\delta_*\}\cup \Delta\setminus\{\delta\} \]
		is a filling meridian system. If $\Delta$ is a cut system, then exactly one choice of $\delta_\pm$ 
		yields a filling meridian system, and it is again a cut system.
	\end{enumerate}
	In the case of handlebodies, any arc $a$ which intersects $\Delta$ only in its endpoints
	and returns to the same side of a curve in $\Delta$ is a wave. Furthermore, in the case of 
	handelbodies there are always two distinct (but possibly homotopic) waves.
\end{lemma}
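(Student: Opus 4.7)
The plan is to find the wave via a standard innermost-disc analysis on a compressing disc for $\alpha$, and then to verify that the new system is filling by reinterpreting the surgery as cutting $V$ along $\Delta$ together with the innermost sub-disc that produces the wave. Part (ii) then requires a small topological dichotomy for the complementary component merged by removing $D_i$.

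To carry this out, I would take a disc $E \subset V$ bounded by $\alpha$ in general position with $D := \bigcup D_i$. Circle components of $E \cap D$ are removed by an innermost-disc move: an innermost circle on $E$ together with the sub-disc it bounds on some $D_j$ forms a sphere, which bounds a ball by irreducibility of $V$, across which $E$ can be isotoped. After cleanup, $E \cap D$ consists of arcs; choosing an innermost arc $c$ on $E$ gives a sub-disc $E' \subset E$ with $\partial E' = c \cup a$ for $a \subset \alpha$ and $c \subset D_i$, whose interior lies in a single component $P$ of $V \setminus D$. The endpoints of $a$ lie on $\delta := \delta_i$, yielding the candidate wave. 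For (i), $c$ splits $D_i$ into two half-discs bounded by $c \cup \delta_\pm$; gluing each to a parallel pushoff of $E'$ gives disjoint meridian discs with boundaries $a \cup \delta_\pm$. The resulting new system $\Delta'$ cuts $V$ in the same way as $\Delta \cup E'$, so only $P$ is affected: if $P$ is a 3-ball the cut produces two 3-balls, and in the compression body case where $P$ could be a trivial surface bundle, $E'$ is forced $\partial$-parallel (no essential discs in such a bundle), giving a ball plus a trivial surface bundle. Hence $\Delta'$ is filling.

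For (ii), with $V$ a handlebody, let $Q$ denote the component of $V \setminus (\Delta\setminus\{D_i\})$ containing $E'$; it arises from regluing the 3-ball(s) of $V\setminus\Delta$ on either side of $D_i$ along $D_i$, and so is either a 3-ball (if $D_i$ separates $Q$) or a solid torus. In the ball case, either new disc alone cuts $Q$ into two 3-balls. In the solid torus case, the combined cut along both new discs gives two 3-balls by (i), whereas two disjoint $\partial$-parallel discs in a solid torus would produce three pieces (two small balls plus a solid-torus core); thus at least one of the new discs is a meridian disc of $Q$, and that choice yields a filling system.

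For the final assertion, an arc $a$ on $\partial V$ meeting $\Delta$ only in its endpoints, both on $\delta$ on the same local side, lies in the boundary of a complementary 3-ball $P$ with both endpoints on the same copy of $D_i$ on $\partial P$. Any arc $c$ on that copy joining those endpoints closes $a$ to a simple curve on the sphere $\partial P$, bounding a disc $E' \subset P$; this $E'$ is exactly the surgery disc needed above, so $a$ is a wave. The main obstacle I anticipate is the solid torus case in (ii): ruling out that both new discs are simultaneously $\partial$-parallel really requires the comparison with the direct cut along $E'$ from (i), rather than being immediate from the surgery construction itself.
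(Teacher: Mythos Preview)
The paper does not actually prove this lemma: it is stated as a recalled standard result, with a pointer to the proof of Theorem~5.3 in McCullough's \emph{Virtually geometrically finite mapping class groups of 3-manifolds}. Your argument is a correct and complete version of exactly that standard innermost-disc argument: find the wave as the boundary arc of an innermost sub-disc $E'$ of a compressing disc for $\alpha$, observe that the surgered system corresponds to cutting additionally along $E'$, and then analyse the effect on the single complementary piece $P$ that $E'$ lies in.

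One small remark on your case analysis for (ii): the parenthetical description ``two small balls plus a solid-torus core'' in the both-$\partial$-parallel case is only one of the possible configurations (the two $\partial$-parallel discs could also be nested), but your actual logical point---that some complementary piece would still be a solid torus, contradicting the conclusion of (i)---holds in every configuration, so the argument is sound. You might also make explicit, when first producing the wave, that the endpoints of $a$ lie on the \emph{same side} of $\delta$; this is immediate from the fact that $\mathrm{int}(E')$ lies in a single component $P$ of $V\setminus D$, and you use it implicitly, but it is exactly the ``wave'' condition in the statement.
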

We call the result of i) \emph{full surgery}, and the result of ii) \emph{surgery of $\Delta$ in the direction of $\alpha$}. If $\Delta$ and the surgery of $\Delta$ are cut systems as in ii), we also call the process
\emph{cut system surgery}.

If $\alpha, \Delta$ are disjoint, we also say that $\Delta$ is obtained from $\Delta$ by surgery in the direction of  $\alpha$ (as this makes certain arguments later easier to state without case distinctions).

\smallskip
If $V$ is a handlebody, we let $\mathrm{Mcg}(V)$ be its mapping class group,
i.e. the group of orientation preserving self-homeomorphisms of $V$ up to isotopy. Since such homeomorphisms
and isotopies preserve the boundary, there is a restriction map
\[ \mathrm{Mcg}(V) \to \mathrm{Mcg}(\partial V), \]
where $\partial V$ is the boundary surface. We define the \emph{handlebody group} $\mathcal{H}(V)$ 
as the image of this map. In other words, the handlebody group consists of those mapping classes
of the surface $\partial V$, which can be extended to homeomorphisms of $V$. It is well-known that
the restriction map is injective, and so $\mathcal{H}(V)$ is in fact isomorphic to the mapping class
group $\mathrm{Mcg}(V)$. However, for us it will be more convenient to think of elements in the 
handlebody group as surface mapping classes. 

If $S \subset \partial V$ is a subsurface, then 
we denote by $\mathcal{H}(S)$ the subgroup of $\mathcal{H}(V)$ formed by all elements supported
in $S$.

\smallskip The following well-known lemma is central for us:

\begin{lemma}[{e.g. \cite[Corollary~5.11]{H-Survey}}]\label{lem:han-criterion}
	Suppose that $\varphi:\partial V\to \partial V$ is a mapping class. If both $C$ 
	and $f(C)$ are filling meridian systems, then $\varphi$ is contained in the handlebody group.
\end{lemma}

We define the \emph{compression body group} similarly, replacing $\partial V$ with the outer boundary
component of the compression body $V$.


\section{Meridian Stabilisers}
\label{sec:good-stabs}
In this section we analyse stabilisers of meridians in handlebody and compression
body groups. For simplicity of exposition we focus on the proof in the case
of handlebody groups, and only indicate the necessary modifications in the case
of compression body groups at the end of the section. 

From an algebraic perspective, the study of meridian stabilisers reduces to the 
study of point-pushing and handlebody groups of smaller genus, just as in the case
of surface mapping class groups; compare \cite[Section~3]{H-Survey} for details.
In particular, this implies that multimeridian stabilisers are finitely generated.

The main result of this section is the following theorem:
\begin{theorem}\label{thm:undistorted-stabilisers}
  Let $V$ be a handlebody, and $\delta$ be a multimeridian. Then the
  stabiliser of $\delta$ in the handlebody group is undistorted.
\end{theorem}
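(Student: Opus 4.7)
The strategy is to realise both $\Mcg(V)$ and $\Stab(\delta)$ as quasi-isometric to two natural graphs of filling disc systems of $V$, and to deduce undistortedness from an equivariant, coarsely Lipschitz retraction between these graphs constructed via surgery.

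First I would set up the coarse models. Let $\mathcal{D}(V)$ be the graph whose vertices are isotopy classes of filling disc systems of $V$ and whose edges join systems that differ by a single elementary move (such as replacing one disc by a disjoint disc or by a single surgery as in Lemma~\ref{lem:surgery-basics}). The $\Mcg(V)$-action on $\mathcal{D}(V)$ has finite vertex stabilisers and finitely many vertex orbits, so the Svarc-Milnor lemma yields a $\Mcg(V)$-equivariant quasi-isometry $\Mcg(V)\simeq \mathcal{D}(V)$. Let $\mathcal{D}_\delta(V)\subset\mathcal{D}(V)$ be the full subgraph spanned by filling disc systems containing $\delta$. Cutting $V$ along the discs bounded by $\delta$ produces handlebodies $V_1,\dots,V_\ell$; vertices of $\mathcal{D}_\delta(V)$ correspond to $\delta$ together with a choice of filling disc system in each $V_j$, so $\mathcal{D}_\delta(V)$ is naturally quasi-isometric to $\prod_j \mathcal{D}(V_j)$. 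Modulo the central $\Z^k$ of Dehn twists about the discs of $\delta$, the group $\Stab(\delta)$ is commensurable with $\prod_j \Mcg(V_j)$, and an analogous Svarc-Milnor argument identifies $\Stab(\delta)$ with $\mathcal{D}_\delta(V)$ up to quasi-isometry.

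The heart of the proof is to construct a $\Stab(\delta)$-equivariant coarsely Lipschitz retraction
\[ \Pi : \mathcal{D}(V) \to \mathcal{D}_\delta(V). \]
Given a filling disc system $\Delta$, I would iteratively apply the handlebody surgery of Lemma~\ref{lem:surgery-basics}(ii), using waves of $\delta$ based on $\Delta$, each step strictly decreasing $|\Delta\cap\delta|$, until the surgered system is disjoint from $\delta$; then I would adjoin $\delta$ and, if needed, complete to a filling disc system of $V$ by adding auxiliary discs in each $V_j$, making this completion in an equivariant way depending only on the restriction of the surgered system to each $V_j$. Once such a $\Pi$ is produced, the inclusion $\mathcal{D}_\delta(V)\hookrightarrow\mathcal{D}(V)$ is automatically a quasi-isometric embedding, which, combined with the two Svarc-Milnor identifications, gives that $\Stab(\delta)\hookrightarrow\Mcg(V)$ is a quasi-isometric embedding, i.e.\ the stabiliser is undistorted.

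The main obstacle is showing $\Pi$ is coarsely Lipschitz: if $\Delta$ and $\Delta'$ differ by one elementary move, one needs a uniform bound on $d_{\mathcal{D}_\delta(V)}(\Pi(\Delta),\Pi(\Delta'))$. The subtlety flagged in the introduction is that the naive Masur-Minsky subsurface projection of $\Delta$ to $V-\delta$ need not consist of meridians, so surgery is essential, and surgery itself depends on non-canonical wave choices. I would therefore split the analysis into (a) showing that the output of the surgery procedure starting from $\Delta$ is independent, up to bounded error in $\mathcal{D}_\delta(V)$, of the particular sequence of waves chosen, and (b) showing that a single elementary move on $\Delta$ propagates through the procedure with only bounded change in the output. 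Step (a) is the crux and requires a careful comparison of different wave sequences, exploiting that Lemma~\ref{lem:surgery-basics}(ii) allows one to retain a single surgered disc at each step in the handlebody case (the feature that fails in general); step (b) should then follow by comparing surgery sequences for $\Delta$ and $\Delta'$ wave by wave, using (a) to absorb any ambiguity.
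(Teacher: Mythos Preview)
Your model is broken at the outset: the action of $\Mcg(V)$ on the graph $\mathcal{D}(V)$ of filling disc systems does \emph{not} have finite vertex stabilisers. If $\Delta=\{\delta_1,\ldots,\delta_g\}$ is a minimal filling disc system, cutting $\partial V$ along $\Delta$ yields a planar surface $P$ with $2g$ boundary circles, and the entire mapping class group of $P$ (rel boundary) embeds in $\Stab_{\Mcg(V)}(\Delta)$; in particular the Dehn twists about the $\delta_i$ already give an infinite abelian subgroup. So Svarc--Milnor does not apply, and $\mathcal{D}(V)$ is not a coarse model for $\Mcg(V)$. This is precisely why the paper's models $\mathcal{G}$ and $\mathcal{R}$ carry the extra datum of a diskbusting loop $l$ (respectively, a graph with simply connected complementary regions): that additional marking is what forces stabilisers to be finite.

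Even after repairing the model, your proposed retraction strategy faces a real obstacle at your ``step~(a)''. You would need that two different surgery sequences from the same $\Delta$ towards $\delta$ land at uniformly bounded distance in $\mathcal{D}_\delta(V)$, and there is no evident reason this holds; different wave choices can lead to genuinely different cut systems. The paper sidesteps this entirely: it does \emph{not} construct a retraction on vertices. Instead it takes a whole edge-path $(C_n,l_n)$ in $\mathcal{G}$ and processes it simultaneously, introducing the notion of a \emph{pattern} (a combinatorial record of a surgery sequence on $C_n$ towards $\delta$) and of \emph{compatible} patterns for adjacent $C_n,C_{n+1}$. The crucial technical step is a secondary ``wave exchange'' move that upgrades compatible patterns to \emph{essential} ones (Corollary~\ref{cor:wave-exchange-terminates}, Lemma~\ref{lem:adjacent-exchange}), after which a uniform bound on nesting depth (Corollary~\ref{cor:bound-nesting-depth}) yields the Lipschitz estimate (Lemma~\ref{lem:projection-compatibility}). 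In short, the paper controls the projected \emph{path} rather than proving well-definedness of a projected \emph{point}, and the essential-pattern machinery is exactly what replaces your missing step~(a).
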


%

\subsection{Distortion and Models}
In this section, we describe the main mechanism by which we will exhibit (un-)distortion. This is
the following relative version of the \v{S}varc-Milnor lemma. While this is well-known to experts, we provide a full formulation and proof for the benefit of the reader. If $X$ is a graph and $Y\subset X$ is a connected subgraph,
we define the distortion function as
\[ D_Y(n) = \max \{d_Y(v,w) \vert v,w\in Y^0, d_X(v,w)\leq n. \} \]
Here $d_X, d_Y$ are the path-metrics on $X,Y$ obtained by declaring each edge to have length $1$.
It is well-known (and not hard to see) that the growth type of $D_Y$ is invariant under quasi-isometries of the
pair $(X,Y)$. For finitely subgroups $H<G$ of finitely generated groups, we define the distortion function 
of the subgroup $H$ as the distortion function for (any) inclusion of Cayley graphs.

\begin{lemma}\label{lem:group-to-model}
	Suppose that $G$ is a group, and $H$ is a subgroup of $G$. Assume that $\mathcal{G}$ is a connected, locally finite graph, on which $G$ acts by isometries and with finite quotient
	and finite stabilisers.
	
	If $\mathcal{G}_H \subset \mathcal{G}$ is a connected subgraph, which is preserved by $H$, and so that
	$\mathcal{G}_H/H$ is finite, then the distortion function of $H$ in $G$ has the same growth type as the
	distortion function of $\mathcal{G}_H$ in $\mathcal{G}$.
\end{lemma}
\begin{proof}
	Pick a basepoint $o \in \mathcal{G}_H$. By the usual \v{S}varc-Milnor lemma, the orbit maps
	\[ H \to \mathcal{G}_H, \quad h \mapsto h\cdot o \]
	and
	\[ G \to \mathcal{G}, \quad g \mapsto g\cdot o\]
	are quasi-isometric embeddings. This shows that the pair $(\mathrm{Cay}(G), \mathrm{Cay}(H))$
	is quasi-isometric to the pair $(\mathcal{G}, \mathcal{G}_H)$, which shows the lemma.
\end{proof}

We will use two geometric models for the handlebody group in the proof of Theorem~\ref{thm:undistorted-stabilisers}
(and subsequent arguments). For the definition of the first, we need the notion of a \emph{discbusting curve}:
we say that $l$ is discbusting, if $l$ is not disjoint (up to isotopy) to any meridian.
\begin{definition}
  For numbers $k_0, k_1>0$, define a graph $\mathcal{G}(k_0, k_1)$ with
  \begin{description}
  \item[Vertices] corresponding to pairs $(C,l)$ of a filling meridian
    system $C$ and a simple discbusting loop $l$ (up to isotopy), so that
    $i(C, l) \leq k_0$.
  \item[C-Edges] between vertices $(C, l)$ and $(C', l)$ if $C'$ is disjoint
    from $C$.
  \item[l-Edges] between vertices $(C, l)$ and $(C, l')$ if
    $i(l, l')\leq k_1$.
  \end{description}
\end{definition}
The following is a standard trick to obtain a geometric model for a group; compare e.g.~\cite[Lemma~7.3]{HH1}.
\begin{lemma}\label{lem:modelbuilding}
	There are choices of $k_0, k_1$ so that $\mathcal{G}(k_0, k_1)$ is nonempty, connected, locally finite, and the handlebody group acts on $\mathcal{G}(k_0, k_1)$ properly discontinuously and cocompactly.
\end{lemma}
\begin{proof}
	First, observe that there are finitely many filling meridian systems $C_1, \ldots C_k$ so that
	every filling meridian system is in the handlebody group orbit of one of the $C_i$. This 
	follows from Lemma~\ref{lem:han-criterion} and the fact that there are only finitely many types of multicurves up to the action of 
	the mapping class group. As being discbusting is invariant under the action of the handlebody group,
	this shows that there is a constant $k_0$ so that for any filling meridian system $C$ there is a discbusting
	loop $l$ so that $i(C, l) \leq k_0$.
	
	\smallskip Next, we claim that the stabiliser of $C$ in the handlebody group acts with finitly many orbits on the set of (isotopy classes)
	of pairs $(C,l)$ as in the definition of the vertices. By the intersection number bound, $l \cap (S-C)$ is a collection of at most
	$k_0$ embedded arcs. Up to the action of the mapping class group of $S-C$, there are only finitely many
	such arc systems. By Lemma~\ref{lem:han-criterion}, the same is therefore true for the action of the stabiliser of $C$ in the handlebody group.
	Up to Dehn twists about the curves in $C$, there are only finitely many possible curves $l$ which can be
	obtained by connecting the arcs $l \cap (S-C)$. Since the Dehn twists about $C$ are contained in the
	handlebody group as well, this implies the claim.
	
	\smallskip As a consequence, we first claim that we can choose the constant $k_1$ large enough so that any two vertices of the form $(C, l), (C, l')$ are connected by a path of l-edges. To see this, fix a set $l_1, \ldots, l_r$ of orbit representatives of such discbusting curves (for the action of the stabiliser of $C$), and a finite generating set $\varphi_1, \ldots, \varphi_k$ of the stabiliser of $C$ in the handlebody group. Now, 
	if $k_1 > \max_{a,b,c} i(l_a,\varphi_bl_c)$ the desired property holds.
	
	\smallskip Now, suppose that $(C,l)$ and $(C', l')$ are arbitrary. We claim that there is a path between them.
	To show this, first find a sequence $C = C_0, C_1, \ldots, C_N = C'$ of filling meridian systems so that 
	$C_i, C_{i+1}$ are disjoint. This is possible e.g. by surgery. Next, find discbusting curves $l_i$ so 
	that $(C_i, l_i), (C_{i+1}, l_i)$ represent vertices of the graph. This is possible by the choice of $k_0$ above.
	In particular, $(C_i, l_i), (C_{i+1}, l_i)$ are therefore joined by a C-edge.
	From this, we can assemble the desired path as
	\[ (C_0,l) \to (C_0,l_0) \to (C_1, l_0) \to (C_1, l_1) \to (C_2, l_1) \to \cdots \to (C_N,l_{N-1}) \to (C_N, l') \]
	where every $\to$ denotes a l-edge or C-edge. This shows that the graph is connected.
	
	\smallskip Next, observe that the graph is locally finite. This follows since for any vertex $(C,l)$ the curve
	system $C\cup \{l\}$ is filling, and there are therefore only finitely many isotopy classes of curves
	whose intersection number with $C\cup\{l\}$ is bounded by a given constant. By the same reason, the stabiliser of any vertex is finite (indeed, the stabiliser of a filling curve system in the mapping class group is finite).
	Thus, the action of the handlebody group is properly discontinous.
	
	Cocompactness of the action then follows immediately, since (by construction) there are only finitely many
	orbits of vertices.
\end{proof}
In the sequel, we make choice of $k_0, k_1$ as in the lemma, and simply denote the resulting
graph by $\mathcal{G}$. Observer that by the \v{S}varc-Milnor lemma, $\mathcal{G}$ 
will then be equivariantly quasi-isometric to the handlebody group.

\begin{definition}
  For a multimeridian $\delta$, we let $\mathcal{G}(\delta)$ be the full subgraph
  spanned by all those vertices $(C,l)$ whose meridian system contains $\delta$.
\end{definition}
\begin{corollary}\label{cor:modelbuilding}
	The subgraphs $\mathcal{G}(\delta)$ are connected for all $\delta$, and
	the stabiliser of $\delta$ in the handlebody group acts on $\mathcal{G}(\delta)$ cocompactly.
\end{corollary}
\begin{proof}
	This is a corollary of the proof of Lemma~\ref{lem:modelbuilding}. First, for connectivity, we just
	have to observe that the sequence $C_0, \ldots, C_N$ can be chosen to contain $\delta$. This is 
	automatic for surgery sequences (if both $C, C'$ contain $\delta$, then every step of the surgery
	sequence will have the same property).
	
	For cocompactness, one observes that the stabiliser of $\delta$ acts with finitely many orbits on
	the set of filling meridian systems which contain $\delta$. One way to see this is to note that
	the stabiliser of a multicurve $\delta$ in the mapping class group acts with finitely many orbits
	on the set of multicurves containing $\delta$, and using again Lemma~\ref{lem:han-criterion}.
\end{proof}
Now, Lemma~\ref{lem:group-to-model} implies that in order to prove
Theorem~\ref{thm:undistorted-stabilisers}, it suffices to show that
the subgraph $\mathcal{G}(\delta)$ is undistorted in $\mathcal{G}$.

In the proof it will be useful to use a second model\footnote{The reason for using both models is
  that curves and curve pairs have easier to phrase minimal position properties
  as opposed to embedded graphs.}, which is 
similar to the graph of rigid racks employed in~\cite{HH1}.
\begin{definition}
  For numbers $k_0, k_1$, the graph $\mathcal{R}(k_0, k_1)$ has
  \begin{description}
  \item[Vertices] corresponding to (isotopy classes of) connected graphs
    $\Gamma\subset\partial V$ with at most $k_0$ vertices, which
    contain a filling meridian system $C(\Gamma)$ as an embedded subgraph, and have simply connected complementary regions.
  \item[Edges] between graphs $\Gamma, \Gamma'$ which intersect in at most $k_1$ points (up to isotopy). 
  \end{description}
\end{definition}
Arguing as in the proof of Lemma~\ref{lem:modelbuilding}, we can choose the constants $k_0, k_1$ so that the resulting graph is connected, locally finite, and the action of the handlebody group is properly discontinuous and cocompact (here, the role of $l$ is played by the complement of the embeddded subgraph defined by the filling meridian system). Similar to the definition of $\mathcal{G}(\delta)$, we define
$\mathcal{R}(\delta)$ to be the full subgraph spanned by vertices corresponding to graphs $\Gamma$, to that the 
filling meridian system $C(\Gamma)$ also contains $\delta$. Arguing as above, we may assume that it is also connected, and the stabiliser of $\delta$ acts cocompactly.

By possibly increasing the constants defining $\mathcal{R}$, 
there is a natural map
\[ U:\mathcal{G} \to \mathcal{R} \] which sends a vertex $(C, l)$ to the
union $C\cup l$ (assuming that $l$ is in minimal position with respect
to $C$). Observe that the map $U$ is equivariant for the action of the handlebody group
on $\mathcal{G}, \mathcal{R}$. As a consequence, $U$ is a quasi-isometry, as the handlebody
group acts cocompactly on both of these graphs. The restriction
\[ U:\mathcal{G}(\delta) \to \mathcal{R}(\delta) \] is also a quasi-isometry,
as it is equivariant for the stabiliser of $\delta$, which acts cocompactly on both sides.

\smallskip The strategy to
prove Theorem~\ref{thm:undistorted-stabilisers} is to start with any
path $\gamma:[0, n]\to\mathcal{G}$, and to ``project'' it to a path in
$\mathcal{R}(\delta)$ joining $U(\gamma(0))$ to $U(\gamma(n))$, taking care  that the length of the 
projected path is coarsely bounded by $n$. Since $U$ is a quasi-isometry, this
will imply that $\mathcal{G}(\delta)$ is undistorted in $\mathcal{G}$,
showing Theorem~\ref{thm:undistorted-stabilisers}.

\subsection{Patterns and Surgery}
In this section we will describe a systematic way to simplify a cut
system until it is disjoint from a given (multi)meridian $\delta$,
which is assumed to be fixed throughout the section. This will be the core 
ingredient used to project paths in $\mathcal{G}$ to $\mathcal{R}(\delta)$.
Recall that a cut system is a collection of $g$ meridians $\alpha_1, \ldots, \alpha_g$
the complement of which is a $2g$--holed sphere.
 We could
also perform these arguments with general filling meridian systems, but the
discussion is slightly more convenient in the cut system case.

We use
following terminology and setup throughout this section. Cut the boundary
surface $\partial V$ of the handlebody at the meridian $\delta$. The
resulting (possibly disconnected) surface with boundary has boundary
components $\delta^+, \delta^-$ corresponding to the two sides of
$\delta$.

Suppose now that $C$ is a cut system intersecting $\delta$ transversely 
and minimally. 
Then we call, by a slight abuse of notation, the set
\[ (\delta^+ \cap C) \cup (\delta^-\cap C) \] the \emph{intersection points of $C$
with $\delta$}. Recall that we call the connected components of $C\cap (S-\delta)$
the \emph{$C$--arcs}. Note that endpoints of $C$-arcs are precisely the intersection
points of $C$ with $\delta$.

An \emph{interval} will mean a subarc $I$ of $\delta^+\cup\delta^-$
whose endpoints lie in $C$. 

Observe that for the two endpoints $x,y$
there are uniquely determined $C$--arcs $\gamma_x, \gamma_y$ which
intersect $I$ in $x,y$ (note that these arcs may coincide). We call them
\emph{$C$--arcs adjacent to $I$}.

\begin{definition}
  A \emph{partial pattern} for $C$ is a collection $\mathcal{I}$ of
  intervals satisfying the following properties:
  \begin{description}
  \item[N)] Any two $I, J \in \mathcal{I}$ are disjoint or nested (i.e. one is contained in the 
  interior of the other).
  \item[P)] If $\gamma$ is a $C$--arc adjacent to some $I \in \mathcal{I}$, then both
    endpoints of $\gamma$ are endpoints of intervals in $\mathcal{I}$.
  \end{description}
\end{definition}
\begin{definition}
  A \emph{chain} of a partial pattern $\mathcal{I}$ is a sequence
  \[c=(I_1,\gamma_1,I_2,\gamma_2,\ldots,\gamma_k, I_1)\] of intervals in $\mathcal{I}$ 
  and $C$--arcs so that 
  \begin{enumerate}[i)]
  	\item the intervals $I_j$ can be oriented so that for each $i$, the right endpoint of $I_i$ is
  	one endpoint of $\gamma_i$, and the other endpoint of $\gamma_i$ is
  	the left endpoint of $I_{i+1}$, and
  	\item the sequence does not contain a subsequence satisfying i).
  \end{enumerate}
\end{definition}
An easy induction, using property P), shows that every interval in a
partial pattern is part of a chain which is unique up to cyclic reordering.  By concatenating the $I_i$ and
$\gamma_i$ in a chain $c$, we obtain a closed loop on the surface,
which by abuse of notation we also call a chain of the partial pattern. 

This
concatenation is not embedded, but we can homotope it into \emph{push-off
  position}, by pushing the intervals $I_i$ slightly off of
$\delta^+\cup\delta^-$ into $S-\delta$.  Property N) guarantees that
we can choose push-off positions for all chains so that the chains
themselves are simple closed curves, and different chains do not
intersect (push off more deeply nested intervals further off of
$\delta^+\cup\delta^-$).  From now on we assume that, unless specified
explicitly, all chains are in push-off position.

A chain may be an inessential curve. To avoid this, we put 
\[ C(\mathcal{I}) = \{ \alpha \vert \alpha\mbox{ is an essential curve
  defined by a chain of }\mathcal{I}\}. \]
Each $c\in C(\mathcal{I})$ is a concatenation of subarcs of $C$
and (pushed off copies of) intervals in $\mathcal{I}$.  We call the
intervals $I\in \mathcal{I}$ which appear in this way \emph{active} (intervals may be inactive
if they lie on inessential chains defined by the pattern).
\begin{definition}
  A \emph{pattern} is a partial pattern $\mathcal{I}$ which
  additionally satisfies:
  \begin{description}
  \item[F)] The set $C(\mathcal{I})\cup\{\delta\}$ forms a filling
    meridian system for $V$.
  \end{description}
\end{definition}
\begin{definition}[Compatible Patterns]
  If $C'$ is disjoint from $C$, then a pattern $\mathcal{I}$ for $C$
  and a pattern $\mathcal{I}'$ for $C'$ are \emph{compatible}, if any
  intervals $I\in\mathcal{I}$ and $I'\in\mathcal{I}'$ are either disjoint
  or nested.
\end{definition}
Arguing as before with push-off representatives, we immediately obtain
the following:
\begin{lemma}
  If $C, C'$ are disjoint cut systems, and $\mathcal{I}, \mathcal{I}'$
  are compatible patterns, then $C(\mathcal{I})$ and
  $C'(\mathcal{I'})$ are disjoint.
\end{lemma}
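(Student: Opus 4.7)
The plan is to extend the single-pattern push-off construction so that it operates simultaneously on chains of $\mathcal{I}$ and chains of $\mathcal{I}'$, using compatibility precisely where property N) is used in the within-pattern case. The central observation is that compatibility of $\mathcal{I}$ and $\mathcal{I}'$ is tailored so that the combined collection $\mathcal{I}\cup\mathcal{I}'$ of intervals satisfies property N) jointly: any two intervals in the union are either disjoint or nested. Hence the partial order by inclusion extends to $\mathcal{I}\cup\mathcal{I}'$, and with it the depth-assignment strategy used in the single-pattern push-off of chains.

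Next, I would assign a push-off depth $\epsilon_I$ to each $I\in\mathcal{I}\cup\mathcal{I}'$, pushing more deeply nested intervals further off of $\delta^+\cup\delta^-$ into $S-\delta$, and then realise simultaneous push-off representatives of all chains of both patterns as simple closed curves in $S-\delta$. Disjointness of chains of $C(\mathcal{I})$ from chains of $C'(\mathcal{I}')$ then splits into cases: the $C$-arc pieces of one chain and the $C'$-arc pieces of the other are disjoint since $C\cap C'=\emptyset$; pushed-off intervals of $\mathcal{I}$ and of $\mathcal{I}'$ are disjoint by the nested-depth choice, exactly as different chains within a single pattern are; and the mixed interactions (a $C$-arc of an $\mathcal{I}$-chain against a pushed-off $\mathcal{I}'$-interval, or vice versa) are arranged by the same nesting-based depth argument, now applied across the two patterns rather than within one.

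The only substantive obstacle is bookkeeping: checking that a single coherent assignment of push-off depths exists that simultaneously satisfies the constraints coming from nesting within $\mathcal{I}$, within $\mathcal{I}'$, and between the two. But compatibility is exactly what makes the joint nesting partial order on $\mathcal{I}\cup\mathcal{I}'$ well defined, so the required coherent assignment is automatic. The remaining verification is then structurally identical to the one already implicit in the discussion of push-off position for chains of a single pattern, which is why this extension is \emph{immediate}.
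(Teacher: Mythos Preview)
Your proposal is correct and matches the paper's approach exactly: the paper does not even give a proof, but simply states ``Arguing as before with push-off representatives, we immediately obtain the following,'' and your proposal is precisely an unpacking of that sentence---compatibility extends property~N) to $\mathcal{I}\cup\mathcal{I}'$, whence the nesting-depth push-off construction goes through jointly.
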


Our first goal is to show that patterns always exist, and that for
disjoint cut systems there are compatible patterns. This will be done
by a standard surgery procedure, and the following lemma is analogous
to various results in the literature, compare e.g.~
\cite[Lemma~5.4]{HH1}, \cite[Lemma~1.3]{Hempel} or
\cite[Lemma~1.1]{Masur}.
\begin{lemma}[(Compatible) patterns exist]\label{lem:generating-patterns}
  \begin{enumerate}[i)]
  \item For any filling meridian system $C$ in minimal position with respect to a multimeridian $\delta$, there is a
    pattern $\mathcal{I}$ for $C$, which we call a \emph{surgery pattern}.
  \item If $\mathcal{I}$ is a 
    surgery pattern (i.e. a pattern obtained by applying part i) of this lemma) for $C$, and $C'$ is
    disjoint from $C$, then there is a 
    surgery pattern for $C'$ which is compatible with $\mathcal{I}$.
  \end{enumerate}
\end{lemma}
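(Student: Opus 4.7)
The plan is to prove both parts by a joint surgery induction on the intersection number $|C\cap\delta|$ (respectively $|C'\cap\delta|$), with Lemma~\ref{lem:surgery-basics} as the essential step and an ``outermost wave'' choice at each stage to force the nesting property.

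For part (i), the base case $|C\cap\delta|=0$ is handled by the trivial pattern $\mathcal{I}=\emptyset$, with the convention that each curve of $C$ forms a degenerate chain; then $C(\mathcal{I})\cup\{\delta\}=C\cup\{\delta\}$ is a filling disc system because $C$ is already filling. For the inductive step, apply Lemma~\ref{lem:surgery-basics} with $\Delta=C$ and $\alpha=\delta$ to produce a wave $a\subset\delta$ with endpoints on some $c\in C$, chosen so that the interior of $a$ contains the minimum possible number of intersections of $C$ with $\delta$ among all waves. Part~(ii) of Lemma~\ref{lem:surgery-basics} then produces a new filling disc system $C^{\ast}=\{a\cup c_{\ast}\}\cup (C\setminus\{c\})$ with $|C^{\ast}\cap\delta|<|C\cap\delta|$, so by induction there is a surgery pattern $\mathcal{I}^{\ast}$ for $C^{\ast}$. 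Pushing $a$ off $\delta$ to the appropriate side yields an interval for $C$, and I set $\mathcal{I}=\{a\}\cup\mathcal{I}^{\ast}$, using the inclusion $C^{\ast}\cap\delta\subset C\cap\delta$ to reinterpret the intervals of $\mathcal{I}^{\ast}$ as intervals with endpoints in $C$.

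I then verify the three axioms. Nesting (N) holds by the outermost choice of $a$, which forces every interval of $\mathcal{I}^{\ast}$ to have both endpoints either in the interior of $a$ (nested) or outside $a$ (disjoint). Property (P) is inherited for the inner intervals; at $a$ it holds because the $C$-arcs $\gamma_x,\gamma_y$ adjacent to the endpoints $x,y$ of $a$, together with the pushed-off $a$, merge into a single $C^{\ast}$-arc whose far endpoints are endpoints of $\mathcal{I}^{\ast}$-intervals by (P) applied to $\mathcal{I}^{\ast}$. The filling property (F) needs a chain-tracking argument: each chain of $\mathcal{I}^{\ast}$ that uses the pushed-off $a$ as a $C^{\ast}$-arc corresponds bijectively to a chain of $\mathcal{I}$ in which $a$ appears as an interval instead, so $C(\mathcal{I})$ and $C^{\ast}(\mathcal{I}^{\ast})$ agree up to isotopy, and the latter is filling together with $\delta$ by induction.

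For part (ii), I run the analogous induction on $|C'\cap\delta|$, but at each stage I choose the wave $a'\subset\delta$ for $C'$ to be outermost \emph{relative to} $\mathcal{I}$: namely, among all candidate waves for $C'$, select one whose pushed-off interval is either nested inside some $I\in\mathcal{I}$ or disjoint from every such $I$. Such a choice is always possible because disjointness of $C$ and $C'$ forces $C$-arcs and $C'$-arcs not to cross, so the points of $C\cap\delta$ and $C'\cap\delta$ interlace along $\delta$ in a way compatible with the nested structure of $\mathcal{I}$; within each maximal $\mathcal{I}$-region one then finds an outermost $C'$-wave. The main obstacle I expect is the verification of (F) in the inductive step of (i), i.e. actually proving the chain-by-chain correspondence that identifies $C(\mathcal{I})$ with $C^{\ast}(\mathcal{I}^{\ast})$ up to isotopy; once that bookkeeping is in place, the compatibility claim in (ii) reduces to the nested-outermost selection just described.
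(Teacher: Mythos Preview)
For part~(i), your inductive surgery is essentially the paper's argument recast as an induction. Two small points: a wave of $\delta$ with respect to $C$ is by definition an innermost $\delta$-arc, so its interior already contains no points of $C\cap\delta$ and your ``minimum number of intersections'' selection is vacuous; the paper's ``outermost among parallel waves'' choice serves a different purpose (ensuring minimal position of the surgered system with $\delta$), not the nesting axiom, which is automatic from innermostness. Your chain-tracking identification $C(\mathcal I)\cong C^*(\mathcal I^*)$ is the right idea and matches the paper's push-off representative assertion.

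The genuine gap is in part~(ii). Your induction on $|C'\cap\delta|$ carries the hypothesis that $C'$ is disjoint from $C$, and your argument for the existence of a compatible wave explicitly uses that $C$-arcs and $C'$-arcs do not cross. But after a single surgery $C'\to C'^{*}$ along a wave $a'\subset\delta$, the pushed-off $a'$ meets every $C$-arc that hits $\delta$ at a point interior to $a'$; such points typically exist, since $a'$ is innermost only for $C'$, not for $C$. Hence $C'^{*}$ is in general \emph{not} disjoint from $C$, the inductive hypothesis does not apply to $C'^{*}$, and your compatibility argument is unavailable from the second step onward.

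The paper avoids this not by a separate induction for $C'$ but by \emph{interleaving} the two surgery sequences. It replays the sequence $C=C_1,\,I_1,\,C_2,\ldots$ from part~(i) while maintaining the invariant that the \emph{current} systems $C_r$ and $C'_s$ are disjoint (rather than $C$ and $C'_s$). Whenever the next wave $I_r$ for $C$ still meets $C'_s$, a sub-interval of $I_r$ is a wave for $C'_s$; surgering there keeps $C'_{s+1}$ disjoint from $C_r$, and otherwise one simply advances $r$. Because every $I'_s$ arises as a sub-interval of some $I_r$, compatibility with $\mathcal I$ is automatic. Your ``outermost relative to $\mathcal I$'' selection does not supply this invariant, and this is where your argument needs to be replaced by the interleaving.
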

\begin{proof}
  \begin{enumerate}[i)]
  \item We find the pattern inductively. Note that if an interval $I$
    is innermost, i.e. it does not contain any other intersection
    points of $C$ with $\delta$, then it defines an $\delta$--arc with
    respect to $C$. Thus it makes sense to talk about intervals being a wave of $\delta$
    with respect to $C$.
    Choose an interval $I_1$ which is a wave $w$
    of $\delta$ with respect to $C$ and set $C=C_1$. 
    Let $C_2$ be the result of the cut system surgery of $C_1$ at $w$ (i.e. the surgery of $C_1$ at $w$
    so that the result is a cut system). In fact, 
    $C_2$ has a push-off representative as well, so that each curve in $C_2$ is a concatenation
    of parts of $C_1$ and the interval $I_1$ (compare Figure~\ref{fig:pushoff-surgery}).
    \begin{figure}
      \centering
      \includegraphics[width=0.8\textwidth]{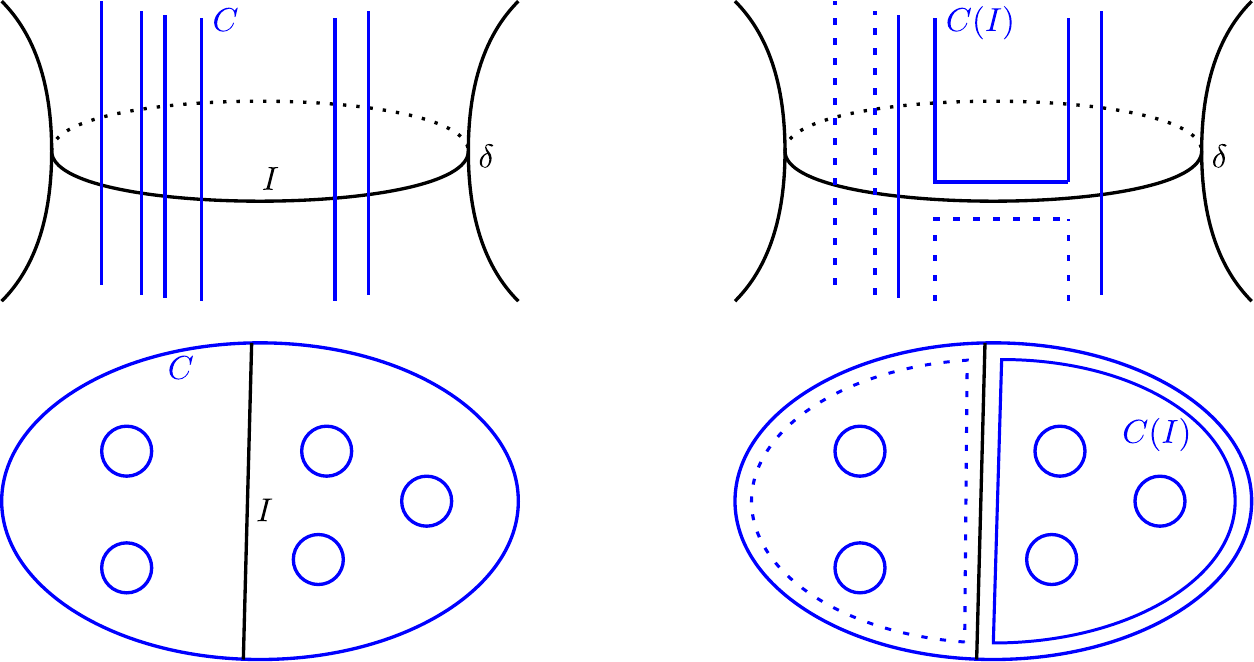}
      \caption{Push-off representatives of a surgery move.}
      \label{fig:pushoff-surgery}
    \end{figure}
	It might be the case that this push-off-representative is not in minimal position with respect 
	to $\delta$ (if there are other waves isotopic to $w$), but there is a choice of $I_1$ so that
	this problem does not occur (by choosing $I_1$ outermost in the sense that in the direction of the push-off there is
	no parallel copy of $w$).

    Now inductively repeat this procedure, defining a surgery
    sequence $C_i$ and a sequence of intervals $I_i$, so that each
    curve in each $C_i$ is obtained as a concatenation of arcs in $C$
    and intervals $I_k, k \leq i$. The final term $C_n$ of this
    surgery sequence is a cut system disjoint from $\delta$. 
	Let $\mathcal{I}$ be the set of those intervals $I_k$ which are still part of $C_n$.
    By construction, we then have that $C(\mathcal{I}) = C_n$.
    This implies that $\mathcal{I}$ is indeed a pattern. Namely, following along the procedure we know that 
    $C_n$ is the push-off representative of $C(\mathcal{I})$. The fact that each component
    of $C(\mathcal{I})$ closes up to be a multicurve implies that $\mathcal{I}$ has property P),
    and the fact that $C(\mathcal{I})$ is simple implies it has property N), 
  	as, if this were not the case, push-off-representatives would have self-intersections. Since $C_n$ is
  	a filling meridian system, $\mathcal{I}$ is indeed a pattern.
 
  \item 
  	Suppose $\mathcal{I}$ is a surgery pattern. Recall that this means that $\mathcal{I}$ is formed 
  	by the construction of part i), and let $C_i, I_i$ denote the sequences constructed there. 
  	To define the desired pattern $\mathcal{I}'$, we will follow a similar construction as in part i), 
	successively building intervals $I'_i$ and meridian systems $C'_i$, using the $C_i, I_i$.
	In each step of the procedure, we will either build a new interval $I_j'$ and a new meridian system  $C'_j$, or	we will show that the current interval is compatible with the next corresponding interval in $\mathcal{I}$.
	
	We begin by putting $C'_1 = C'$. By assumption, $C_1$ is then disjoint from $C'_1$.
	Consider the interval $I_1$: if $I_1$ is disjoint from $C'_1$, then so is $C_2=C_1(I_1)$.
	If $I_1$ intersects $C'_1$, then there is a subinterval $I'_1\subset I_1$ with endpoints
	on $C'_1$ which is also a wave of $\delta$ with respect to $C'_1$. In that case, note
	that $I'_1$ is disjoint from $C_1$, and therefore the same is true for $C'_2 := C'_1(I'_1)$.
	
	Now suppose that $I'_j, j<s, C'_s, j\leq s$ are defined for some $s$, and that there is a
	also a number $r$ with the property that so that $I_i,
    i\leq r, I'_j, j < s$ are nested or disjoint, and so that $C_r,
    C'_s$ are disjoint. In the first case of the previous case distinction, we have $s=0, r=1$ or
    $s=1, r=0$ respectively.
    
    Now consider $I_r$, and again distinguish two
    cases. If $I_r$ is disjoint from $C_s'$, then $C_{r+1}$ is also disjoint from 
    $C'_s$, and $I_r$ is nested or disjoint from the $I'_j, j \leq s$. Hence,
    $(r+1,s)$ again satisfies the assumption above.

    Alternatively, if $I_r$ is not disjoint from $C_s'$, then there is
    a sub-interval $I_r' \subset I_r$ which defines a wave for
    $C'_s$. In that case, let $C'_{s+1}$ be the surgery of $C'_s$ at
    that sub-interval, and note that it is disjoint from $C_r$. Hence,
    $(r, s+1)$ satisfies the assumption above. 
    
    After finitely many such steps, all surgery steps for the sequence $C_i$ have been
    taken, and we have constructed a partial pattern $\mathcal{I}'$ compatible with
    $\mathcal{I}$. Now, we can complete the construction as in i) to build the 
    pattern $\mathcal{I}'$ with the desired properties.
  \end{enumerate}
\end{proof}
The surgery patterns produced by Lemma~\ref{lem:generating-patterns}
are not yet sufficient for our purposes. We will need a second move to
improve patterns; to describe it we use the following terminology:

Suppose that $\mathcal{I}$ is a pattern for $C$, and suppose that $I
\subsetneq J$ are two active intervals in $\mathcal{I}$.  The two
segments of $J\setminus \mathrm{int}(I)$, call them $w_, w_+$ can be
interpreted (up to a small homotopy) as arcs with endpoints on
$C(\mathcal{I})$, and we call these arcs \emph{wings}; compare
Figure~\ref{fig:wings}. Observe that the two wings defined by a nested
pair of intervals are homotopic as arcs with endpoints sliding on $C(\mathcal{I})$.
\begin{figure}
  \centering
  \includegraphics[width=\textwidth]{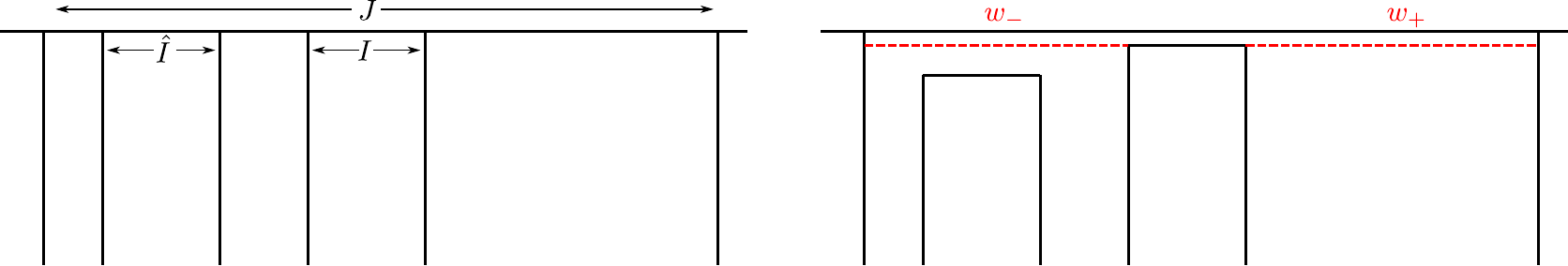}
  \caption{Nested intervals in a pattern define wings. If $I \subset
    J$ are directly nested, the wings can be made disjoint from the
    system $C(\mathcal{I})$, even if there are other intervals $\hat{I} \subset
    J$.}
  \label{fig:wings}
\end{figure}
We say that $I, J$ are \emph{directly nested} if there is no active
$I'$ with $I \subsetneq I' \subsetneq J$. Suppose now that $I
\subsetneq J$ are directly nested. In that case, wings define arcs
that intersect $C(\mathcal{I})$ only in their endpoints (compare again
Figure~\ref{fig:wings} for this situation), since intersections of a
wing with intervals $\hat{I}$ satisfying $\hat{I} \subset J-I$ can be
removed up to homotopy.

Suppose now that $D$ is any filling meridian system, and that $w$ is an embedded 
arc which has both endpoints on the same curve $\gamma\in D$. Then $w$
defines an element in $\pi_1(V)$ by connecting the endpoints of $w$ in
any way along $\gamma$ (observe that since $\gamma$ is a meridian, the resulting
element of $\pi_1(V)$ does not depend on the choice of interval in $\gamma$, but only on $w$). We say that $w$ is a
\emph{$V$-trivial arc} if it has both endpoints on the same curve
$\gamma$ and additionally it defines the trivial element in
$\pi_1(V)$.
\begin{lemma}\label{lem:trivial-arc-wave}
  If $D$ is a filling meridian system and $w$ is an embedded $V$--trivial arc with endpoints on
  a curve in $D$ (which may intersect $D$ in more points than just its endpoints), then
  $w$ contains a subarc $w_0 \subset w$ which defines a wave with
  respect to $D$.
\end{lemma}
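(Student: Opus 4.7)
The plan is to analyse the combinatorial path $w$ traces through the complementary regions of $D$ by passing to the universal cover $\pi\colon\tilde V\to V$. Since $D$ is filling, $\tilde D=\pi^{-1}(D)$ cuts $\tilde V$ into balls, and the dual graph $T$, whose vertices are these balls and whose edges are the lifts of discs in $D$, is a tree---the Bass--Serre tree of the free factor decomposition of $\pi_1(V)=F_g$ induced by $D$. Lift $w$ to an arc $\tilde w\subset\partial\tilde V$. Each endpoint of $\tilde w$ lies on a lift of $\gamma=\partial D_\gamma$; each such lift bounds a lift of $D_\gamma$ and therefore corresponds to an edge of $T$. Call these edges $f_0$ and $f_n$, incident to the balls $v_0$ and $v_n$ at the two ends of $\tilde w$.

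The crucial translation is that $w$ is $V$-trivial if and only if $f_0=f_n$. Indeed, since $\gamma$ bounds $D_\gamma$, any closing arc along $\gamma$ is homotopic rel endpoints through $D_\gamma$ in $V$, and therefore lifts to an arc contained inside a single lift of $D_\gamma$; hence the closed-up loop $w\cdot c$ lifts to a loop in $\tilde V$ precisely when the two endpoints of $\tilde w$ sit on the boundary of a common lift of $D_\gamma$. This is the step where the hypothesis that $\gamma$ is a meridian is essential, and I expect it to be the main subtlety of the write-up.

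With the equivalence in hand, consider the edge-path $p=v_0 e_1 v_1\cdots e_n v_n$ that $\tilde w$ traces in $T$ as it crosses $\tilde D$. If $p$ contains a backtrack $e_i=e_{i+1}$, then the subarc of $\tilde w$ between those two consecutive crossings lies in the single ball $v_i$ with both endpoints on the same lift of a disc in $D$, approached from the same side; projecting back to $\partial V$ yields the desired wave subarc of $w$. If instead $p$ is reduced, then because $T$ is a tree, the condition $f_0=f_n$ forces the reduced $v_0$-to-$v_n$ path to be the single edge $f_0$, so $p$ has length $0$ or $1$. In the length-$0$ case $\tilde w$ is contained in a single ball with both endpoints on $f_0$, and $w$ itself is already a wave. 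In the length-$1$ case the unique interior intersection of $w$ with $D$ lies on $D_\gamma$, so each of the two halves of $w$ sits in one complementary ball with both endpoints on $\gamma$, and each half is a wave; either one serves as the required $w_0$.
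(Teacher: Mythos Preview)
Your proof is correct and follows essentially the same route as the paper: lift to the universal cover of $V$ and use that each lift of a disc in $D$ separates. You phrase this via the Bass--Serre dual tree and give an explicit case analysis (backtrack in the edge-path versus a path of length $0$ or $1$), whereas the paper compresses the whole argument into the single observation that, since all lifts of $D$ are separating and $\tilde w$ has both endpoints on one such lift, some subarc of $\tilde w$ must return to the same side of a lift of $D$; your version is more detailed but not materially different.
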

\begin{proof}
  We use a particular covering space of $\partial V$. Namely, consider
  the universal covering $\widetilde{V} \to V$, and let $Y
  \to \partial V$ be the restriction to the boundary.  By the lifting
  theorem, a closed (possibly nonembedded) loop on $\partial V$ lifts to a closed (possibly nonembedded) loop on $Y$
  exactly if it defines the trivial element in $\pi_1(V)$. In other
  words, the cover $Y$ is the cover corresponding to the subgroup
  $\pi_1(Y) = \ker(\pi_1(\partial V)\to \pi_1(V))$.

  Every lift of a curve in $D$ to $Y$ is seperating (this can e.g. be
  seen by observing that $V$ is homotopy equivalent to a tree in a way so that
  each lift of a curve in $D$ maps to a point).

  Now, choose a lift $\widetilde{w}$ of $w$ to $Y$. Since $w$ is
  assumed to be $V$--trivial, this lift joins some lift
  $\widetilde{\delta}$ of a curve $\delta$ in $D$ to itself. Namely,
  choose a subarc $d\subset \delta \in D$ so that the
  concatenation $d*w$ is a (not necessarily embedded) loop in
  $V$ which is trivial in $\pi_1(V)$. Hence, $d*w$ lifts to a closed loop in
  $Y$ (which also need not be embedded), which has the form
  $\widetilde{d}*\widetilde{w}$, where $\widetilde{d}$ is contained in a lift
  $\widetilde{\delta}$ of the curve $\delta$. In particular, $\widetilde{w}$ has
  both endpoints on that lift.
  
  Since all lifts of curves in $D$ are separating in $Y$, and we assume that 
  $w$ (hence $\widetilde{w}$) is embedded, there is a subarc
  $\widetilde{w}_0 \subset \widetilde{w}$ whose interior is disjoint from
  all lifts of curves in $D$, and has both endpoints on the same lift. 


  The image $w_0$ of $\widetilde{w}_0$ under the covering map is a
  subarc of $w$ which has both endpoints on the same curve of
  $D$, and approaches it from the same side at both endpoints. By the
  last part of Lemma~\ref{lem:surgery-basics}, $w_0$ is then the
  desired wave.
\end{proof}

\begin{lemma}\label{lem:structure-wings}
  Let $C$ be a cut system and $\mathcal{I}$ a pattern.  Suppose that
  $I\subsetneq J$ are two active intervals of $\mathcal{I}$, and that
  $w$ is a wing of $I\subsetneq J$.
  \begin{enumerate}[i)]
  \item If $w$ is $V$-trivial, then there are directly nested 
    active intervals $I' \subsetneq J'$ which have a wing $w'\subset w$ that defines a wave.
  \item Suppose now that  $I\subsetneq J$ are directly nested, and that $w$ defines a wave.
   Let $C'$ be a filling meridian system which is disjoint from $C$ and let $\mathcal{I}'$
    be a pattern for $C'$ which is compatible with $\mathcal{I}$. Then either the wing $w$ can be
    made disjoint from $C'(\mathcal{I}')$, or there is a pair $I'\subsetneq J'$ of directly
    nested intervals in $\mathcal{I}'$ whose wing $w'$ is a wave and contained in $w$.
  \end{enumerate}
\end{lemma}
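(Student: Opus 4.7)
The overall strategy for both parts is to extract a wave $w_0 \subset w$ from the given arc, and then to recognize this wave as (containing) a wing of a directly nested pair of intervals in the appropriate pattern. The bridge between the analytic notion of wave and the combinatorial notion of wing is the core observation.

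\textbf{Part i).} Property F) guarantees that $D = C(\mathcal{I}) \cup \{\delta\}$ is a filling disc system. I apply Lemma~\ref{lem:trivial-arc-wave} to the $V$--trivial arc $w$ and the system $D$, producing a subarc $w_0 \subset w$ which is a wave with respect to $D$. Because $w_0$ is (a push-off of) a subarc of $\delta^\pm$, its interior is disjoint from $\delta$ automatically, and the wave condition reduces to saying that no endpoints of active intervals of $\mathcal{I}$ lie in the interior of $w_0$. The endpoints of $w_0$ then lie on a common chain curve $\alpha \in C(\mathcal{I})$, and so they are endpoints of active intervals $I_1, I_2 \in \mathcal{I}$ on the chain of $\alpha$. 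Letting $J'$ be the smallest interval of $\mathcal{I}$ containing $w_0$, and $I'$ the outermost active interval strictly nested inside $J'$ and abutted by an endpoint of $w_0$, axiom N) together with the absence of interior endpoints of intervals on $w_0$ forces $I' \subsetneq J'$ to be directly nested. Moreover both intervals lie on the chain of $\alpha$, so the relevant wing of this pair is a wave contained in $w$.

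\textbf{Part ii).} View $w$ as an arc in $\partial V$ and consider its intersections with $C'(\mathcal{I}')$. Such intersections only occur at endpoints of intervals of $\mathcal{I}'$, since $C'(\mathcal{I}')$ is disjoint from $\delta$ except at the push-offs of interval endpoints. If no essential intersections remain after isotopy (with endpoints sliding on $C(\mathcal{I})$), then $w$ can be made disjoint from $C'(\mathcal{I}')$ and we are in the first alternative. Otherwise I choose an innermost subarc $w_1 \subset w$ whose two endpoints lie on a common chain curve $\alpha' \in C'(\mathcal{I}')$ and whose interior is disjoint from $C'(\mathcal{I}')$. By construction $w_1$ is a wave for $D' = C'(\mathcal{I}') \cup \{\delta\}$, hence $V$--trivial. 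Now apply the argument of part i) with the roles of $\mathcal{I}$ and $D$ replaced by $\mathcal{I}'$ and $D'$: this yields directly nested $I' \subsetneq J' \in \mathcal{I}'$ whose wing is a wave contained in $w_1 \subset w$. Compatibility of the two patterns (mutually nested-or-disjoint intervals) ensures that such $I', J'$ interact correctly with the original pair $I \subsetneq J$, so that the extracted wing really does sit inside $w$ and not across $I$.

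\textbf{Main obstacle.} The delicate step in both parts is the combinatorial identification of a wave on $\delta^\pm$ with a wing of a directly nested pair. One must use the laminar structure of patterns together with property P) to verify that the outermost active child of the smallest containing interval is correctly positioned relative to the endpoints of the wave. In part ii), the additional subtlety is that the compatibility condition must be used so that the extracted directly nested pair in $\mathcal{I}'$ does not ``cross'' the pair $I \subsetneq J$ of $\mathcal{I}$ in any way that forces its wing out of $w$. Once this correspondence between waves and wings is set up, both statements follow by a standard innermost-arc argument.
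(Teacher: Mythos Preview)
Your overall strategy matches the paper's: in both parts one extracts a wave subarc via Lemma~\ref{lem:trivial-arc-wave} and then identifies its endpoints with a directly nested pair of active intervals. Your part~i) is essentially the paper's argument, with the combinatorial identification of $I',J'$ spelled out a bit more explicitly.

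In part~ii), however, there is a genuine gap. You never use the hypothesis that $w$ is a wave. You simply ``choose an innermost subarc $w_1\subset w$ whose two endpoints lie on a common chain curve $\alpha'\in C'(\mathcal{I}')$ and whose interior is disjoint from $C'(\mathcal{I}')$'' and declare that ``by construction $w_1$ is a wave''. Neither the existence of such a $w_1$ nor the same-side condition required of a wave is automatic: an arc can cross $C'(\mathcal{I}')$ so that every innermost subarc has endpoints on distinct curves or on opposite sides of the same curve. What forces a wave subarc to exist is precisely that $w$, being a wave for $C(\mathcal{I})$, is $V$--trivial; then Lemma~\ref{lem:trivial-arc-wave} applied to the filling system $C'(\mathcal{I}')\cup\{\delta\}$ produces the $w_1$ you want. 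The paper makes exactly this move (``Since $w$ defines a trivial loop in $\pi_1(V)$, a subarc $w_1\subset w$ therefore defines a trivial loop with endpoints on $C'(\mathcal{I}')$''), after first observing that the essential intersections of $w$ with $C'(\mathcal{I}')$ correspond to a nested chain $I\subsetneq I_1'\subsetneq\cdots\subsetneq I_k'\subsetneq J$ of $\mathcal{I}'$--intervals. Once you insert this step, your argument and the paper's coincide; note also that applying part~i) to $w_1$ is then not redundant, since $w_1$ need not yet have interior disjoint from $C'(\mathcal{I}')$.
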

\begin{proof}
  \begin{enumerate}[i)]
  \item Consider $I=I_1 \subsetneq I_2 \cdots \subsetneq I_k = J$ a
    maximal chain of nested final intervals in the pattern. Without loss of generality we may assume
    that $w$ connects the left endpoint of $J$ to the left endpoint of $I$.
    We can push 
    $w$ off of the curve of $C$ containing it a little bit to form an arc $w'$, so that the intersections
    of $w'$ with $C(\mathcal{I})$ exactly correspond to the points $p \in w$ which
    are endpoints of intervals $K\in\mathcal{I}$. 
    
    If $K$ is an interval both endpoints of which are contained in $w$, then an isotopy
    of $C(\mathcal{I})$ can remove both of these intersections (compare Figure~\ref{fig:wings}).
    After this modification, the intersection points of $w'$ with 
    $C(\mathcal{I})$ exactly correspond to the left endpoints of the $I_j$. 
    
    Since $w'$ is $V$-trivial loop, its intersection with the filling meridian system
    $C(\mathcal{I}) \cup \{\delta\}$ has a
    wave by Lemma~\ref{lem:trivial-arc-wave}. The endpoints of the wave $w_0 \subset w'$  then correspond to 
    the left endpoints of $I_i, I_{i+1}$ for some $i$. This implies that $I_i, I_{i+1}$ are directly
    nested, and they have a wing which is a wave as claimed.
  \item Arguing as in i), we see that an essential intersection of the
    wing $w$ with the system $C'(\mathcal{I}')$ can only occur if
    there are intervals $I \subsetneq I' \subsetneq J$ with $I'\in
    \mathcal{I}'$. In fact, as above, the essential intersections of $w$ then exactly correspond to
    intervals $I_j'\in\mathcal{I}'$ with $I\subsetneq I_1' \subsetneq \cdots \subsetneq I_k' \subsetneq J$.
    Apply Lemma~\ref{lem:trivial-arc-wave} to $w$ and the filling meridian system $C(\mathcal{I})\cup C'(\mathcal{I}')\cup \{\delta\}$ as above. There is thus a wave $w_1 \subset w$ of this arc. Its endpoints
    are necessarily contained in $C'(\mathcal{I}')$ since the interior of $w$ is disjoint from $C(\mathcal{I})$ and $\delta$, and so $w_1$ defines the desired nested intervals as in i).
  \end{enumerate}
\end{proof}
Finally, we choose once and for all a hyperbolic metric on the surface
$S$. This allows us to talk about the length of intervals. The \emph{length}
of a pattern is the sum of the lengths of all intervals chosen by the pattern.
We can now describe the \emph{wave exchange move}. Suppose that
$\mathcal{I}$ is a pattern for some cut system $C$, and suppose that
$I\subsetneq J$ are directly nested active intervals whose wings $w_1,
w_2$ are waves. The wave exchange $\mathcal{I}(w_1, w_2)$ is the set
obtained by replacing $\{I, J\}$ by $\{w_1, w_2\}$; see Figure~\ref{fig:wave-flip} for 
this construction.
\begin{lemma}\label{lem:exchange-is-pattern}
  The wave exchange $\mathcal{I}(w_1, w_2)$ is a pattern of strictly
  smaller length than $\mathcal{I}$.
\end{lemma}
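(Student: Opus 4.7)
The length bound is immediate: since $w_1 \sqcup w_2 = J \setminus \mathrm{int}\, I$, the total length changes by $(|w_1|+|w_2|)-(|I|+|J|) = -2|I| < 0$, where the strict inequality uses that $I$ is a non-degenerate interval. So the real content is to verify that $\mathcal{I}(w_1,w_2)$ remains a pattern, i.e., still satisfies properties N), P), and F).

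For property N), I would proceed by a case analysis on an arbitrary $K \in \mathcal{I} \setminus \{I,J\}$. If $K$ is disjoint from $J$ or strictly contains $J$, then $K$ is automatically disjoint from, or strictly contains, both $w_1$ and $w_2$. If $K \subseteq I$, then $K$ is disjoint from both wings. The only potentially troublesome case, $I\subsetneq K\subsetneq J$, cannot occur for active $K$ by the direct-nestedness hypothesis; any inactive $K$ in this range may be discarded before the exchange without altering $C(\mathcal{I})$, since by definition inactive intervals appear only on inessential chains. Property P) is then easy: the endpoints of $w_1,w_2$ are precisely the four endpoints of $I$ and $J$, so any $C$-arc adjacent to $w_1$ or $w_2$ was already adjacent to one of $I,J$ in the old pattern, and the endpoint condition is inherited.

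Property F) is where the wave hypothesis genuinely enters. My plan here is to identify $C(\mathcal{I}(w_1,w_2))$ with the result of a \emph{full surgery} of $C(\mathcal{I})$ along the wave $w_1$ (equivalently $w_2$), in the sense of Lemma~\ref{lem:surgery-basics}~i). Concretely, the chain(s) of $\mathcal{I}$ passing through $I$ and $J$ get restructured in $\mathcal{I}(w_1,w_2)$ into chains that use $w_1$ and $w_2$ in place of $I,J$; the resulting loops should agree, up to homotopy and up to inessential components, with the two surgered meridians together with the unchanged curves of $C(\mathcal{I})$. Since $w_1$ is by assumption a wave for $C(\mathcal{I})$, applying Lemma~\ref{lem:surgery-basics}~i) to the filling system $C(\mathcal{I})\cup\{\delta\}$ yields a filling disc system, which after dropping inessential components coincides with $C(\mathcal{I}(w_1,w_2))\cup\{\delta\}$.

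The main technical obstacle will be this chain bookkeeping: tracking carefully how the chain (or pair of chains) through $I$ and $J$ decomposes into the new chains using $w_1$ and $w_2$, checking that the parity of sides matches so that the waves implement a genuine full surgery, and confirming that inessential chains on both sides cancel consistently. Once this identification of $C(\mathcal{I}(w_1,w_2))$ with a full surgery of $C(\mathcal{I})$ is in place, F) (and hence the lemma) reduces immediately to Lemma~\ref{lem:surgery-basics}.
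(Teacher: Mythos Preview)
Your approach is essentially the same as the paper's: verify N), P), F) in turn, with F) reduced to identifying $C(\mathcal{I}(w_1,w_2))$ as the full surgery of $C(\mathcal{I})$ along $w_1$ and invoking Lemma~\ref{lem:surgery-basics}~i). Two small remarks: in your N) case analysis you omitted the (harmless) case $K\subset J\setminus I$, where $K$ is simply nested in one wing; on the other hand, your explicit handling of inactive intervals with $I\subsetneq K\subsetneq J$ is more careful than the paper, which tacitly assumes no such $K$ exists.
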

\begin{proof}
	We begin by noting that $\mathcal{I}(w_1, w_2)$ satisfies property N) since 
	the intervals $I\subsetneq J$ are supposed to be directly nested. Hence, any 
	interval in  $\mathcal{I}\setminus\{I,J\}$ which intersects $J-I$ is contained in $J-I$, and therefore
	nested inside $w_1$ or $w_2$. Property P) is obvious, since $\mathcal{I}$ and
	$\mathcal{I}(w_1, w_2)$ connect the same intersection points of $\delta$ with $C$.
	To show property F), note that $C(\mathcal{I}(w_1, w_2))$ is the full surgery
  of $C(\mathcal{I})$ at the wave $w_1$ (or $w_2$), and therefore is still filling
  (compare Figure~\ref{fig:wave-flip}). 
    \begin{figure}
      \centering
      \includegraphics[width=0.6\textwidth]{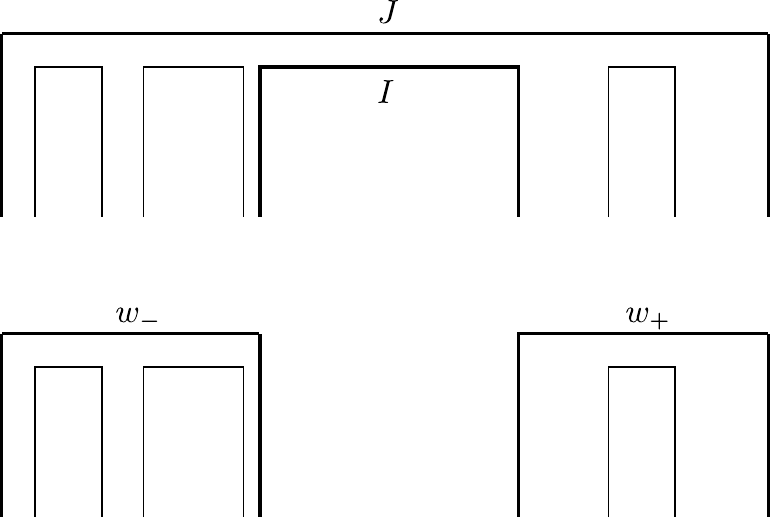}
      \caption{Suppose $I \subset J$ are directly nested, with wings
        defining waves, depicted above. Below is the wave exchange.}
      \label{fig:wave-flip}
    \end{figure}
  The claim about length is due to the fact that $w_1\cup w_2 \subsetneq J$.
\end{proof}
\begin{definition}
  Call a pattern \emph{essential} if no nested intervals have wings which are
  $V$-trivial arcs.
\end{definition}
\begin{corollary}\label{cor:wave-exchange-terminates}
  Any pattern $\mathcal{I}$ admits a finite sequence of waves exchanges
  terminating in an essential pattern.
\end{corollary}
\begin{proof}
	Since by the previous lemma each wave exchange strictly decreases
	the length of the pattern, any sequence of wave exchanges starting
	in $\mathcal{I}$ is finite, and terminates in a pattern $\mathcal{I}'$
	which does not admit any wave exchanges. Then by Lemma~\ref{lem:structure-wings}, 
	no nested pair of intervals of $\mathcal{I}'$ has a wing which defines a $V$-trivial 
	arc, as otherwise there would be a pair defining a wave, and hence the pattern would admit
  another wave exchange. Hence, $\mathcal{I}'$ is essential as claimed. 
\end{proof}
\begin{lemma}\label{lem:adjacent-exchange}
  Suppose that $\mathcal{I}$ is a pattern for some cut system $C$, and
  suppose that $I\subsetneq J$ are directly nested active intervals
  whose wings $w_1, w_2$ are waves. Suppose further that $C'$ is
  disjoint from $C$ and that $\mathcal{I}'$ is a compatible pattern.

  Then either $\mathcal{I}(w_1, w_2)$ is compatible with
  $\mathcal{I}'$, or there is a wave exchange $\mathcal{I}'(w'_1,
  w'_2)$ which is compatible with $\mathcal{I}$.
\end{lemma}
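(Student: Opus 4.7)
The plan is to apply Lemma~\ref{lem:structure-wings}(ii) to each of the two wings $w_1, w_2$ of the pair $I\subsetneq J$. For each wing, that lemma supplies one of two alternatives: either the wing can be made disjoint from $C'(\mathcal{I}')$, or the wing contains a sub-wing $w'$ which is a wave, realised as a wing of some directly nested active pair $I'\subsetneq J'$ in $\mathcal{I}'$.

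If both wings fall into the first alternative, I would argue directly that $\mathcal{I}(w_1,w_2)$ is compatible with $\mathcal{I}'$. The new compatibility checks pit each $w_k$ against intervals $K'\in\mathcal{I}'$. Since $K'$ is already nested or disjoint from both $I$ and $J$, a short case analysis shows that $K'$ fails to be nested or disjoint from $w_k$ precisely when $I\subsetneq K'\subsetneq J$; and the proof of Lemma~\ref{lem:structure-wings}(ii) identifies these $K'$ with exactly the intervals producing essential intersections of $w_k$ with $C'(\mathcal{I}')$. If no such intersections remain, no such $K'$ exists, and compatibility holds.

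Otherwise, for some wing $w_k$ we obtain a directly nested active pair $I'\subsetneq J'$ in $\mathcal{I}'$ whose wings are both waves (the second wing being a wave by the homotopy of wings of a nested pair), so the exchange $\mathcal{I}'(w'_1,w'_2)$ is defined. Extracting from the proof of Lemma~\ref{lem:structure-wings}(ii), the pair $I'\subsetneq J'$ is obtained from two consecutive entries of the chain of intervals of $\mathcal{I}'$ lying between $I$ and $J$, so we obtain the nesting $I\subsetneq I'\subsetneq J'\subsetneq J$ (strict containment is automatic since $C$ and $C'$ are disjoint). I would then check that $\mathcal{I}'(w'_1,w'_2)$ is compatible with $\mathcal{I}$: a failure would demand some $K\in\mathcal{I}$ with $I'\subsetneq K\subsetneq J'$, but then $I\subsetneq K\subsetneq J$, contradicting direct nestedness of $I\subsetneq J$ in $\mathcal{I}$.

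The main obstacle will be the positional claim $I\subsetneq I'\subsetneq J'\subsetneq J$ in the second case, which has to be read off from the internals of Lemma~\ref{lem:structure-wings}(ii) (the wave $w'$ is produced by a subarc of $w_k$ whose endpoints sit on consecutive members of the chain of problematic intervals), rather than from its bare statement. Once this placement is established, direct nestedness of $I\subsetneq J$ in $\mathcal{I}$ immediately forecloses any new compatibility failure, so the rest of the argument is routine bookkeeping.
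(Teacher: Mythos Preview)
Your proposal is correct and follows essentially the same route as the paper's proof: apply Lemma~\ref{lem:structure-wings}(ii), identify the obstruction to compatibility of $\mathcal{I}(w_1,w_2)$ with $\mathcal{I}'$ as the existence of an $\mathcal{I}'$--interval $K'$ with $I\subsetneq K'\subsetneq J$, and in that case locate a directly nested pair $I'\subsetneq J'$ in $\mathcal{I}'$ with $I\subsetneq I'\subsetneq J'\subsetneq J$, so that direct nestedness of $I\subsetneq J$ in $\mathcal{I}$ rules out any $\mathcal{I}$--interval between $I'$ and $J'$. You are slightly more explicit than the paper in two places (the case analysis for compatibility in the first branch, and the observation that the second wing of $I'\subsetneq J'$ is a wave by homotopy), and you correctly flag that the nesting $I\subsetneq I'\subsetneq J'\subsetneq J$ must be read off from the \emph{proof} of Lemma~\ref{lem:structure-wings}(ii) rather than its bare statement; the paper glosses over both points. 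One small simplification: since the two wings of $I\subsetneq J$ are homotopic, they fall into the same alternative of Lemma~\ref{lem:structure-wings}(ii), so you need only treat one of them.
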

\begin{proof}
  If $w_1$ (and thus $w_2$) can be made disjoint from
  $C'(\mathcal{I}')$, then the wave exchange $\mathcal{I}(w_1, w_2)$
  is compatible with $\mathcal{I}'$. As in the proof of 
  Lemma~\ref{lem:structure-wings}, this happens exactly if there is no
  $\mathcal{I}'$--interval nested between the $\mathcal{I}$--intervals
  defining $w_1,w_2$.  Otherwise, by Lemma~\ref{lem:structure-wings},
  there are wave-wings $w_1',w_2'$ contained in $w_1, w_2$, defined by 
  intervals $I', J' \in \mathcal{I}'$. Since we assumed
  that $I\subset J$ are directly nested, no interval in $\mathcal{I}$ nests
  between $I'$ and $J'$. Hence, by
  reversing the roles, $\mathcal{I}'(w'_1, w'_2)$  is compatible
  with $\mathcal{I}$.
\end{proof}

\begin{lemma}\label{lem:nontrivial-waves-intersected}
  Let $C$ be a cut system, and $\mathcal{I}$ an essential pattern.
  Suppose that $I\subsetneq J$ are two active intervals,
  belonging to the same curve in $C(\mathcal{I})$. 

  Let $C'$ be disjoint from $C$, and let $\mathcal{I}'$ be a pattern for $C'$ which is compatible with
  $\mathcal{I}$.  Then there is an interval $K \in \mathcal{I}'$ so that
  $I \subset K \subset J$.
\end{lemma}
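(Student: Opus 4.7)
My plan is to argue by contradiction: assume no $K \in \mathcal{I}'$ satisfies $I \subsetneq K \subsetneq J$, and derive a contradiction from essentiality of $\mathcal{I}$ together with the filling property of $C'(\mathcal{I}') \cup \{\delta\}$.

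First, I let $\alpha \in C(\mathcal{I})$ denote the common curve of the chain containing $I$ and $J$, and pick a wing $w$ of $I \subsetneq J$. Because $I$ and $J$ lie on the same chain, both endpoints of $w$ lie on $\alpha$, so I can close $w$ up with a subarc $\alpha' \subset \alpha$ to form a loop $\ell = w \cup \alpha'$. Since $\alpha$ is a meridian, $[\ell] \in \pi_1(V)$ is independent of the choice of $\alpha'$, and essentiality of $\mathcal{I}$ forces $[\ell] \neq 1$ because $w$ is then not $V$-trivial.

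The key geometric step is to produce a representative of $\ell$ disjoint from $C'(\mathcal{I}') \cup \{\delta\}$. The arc $\alpha'$ is disjoint from both summands by construction of $C(\mathcal{I})$ and by the compatibility lemma preceding Lemma~\ref{lem:generating-patterns}, and $w$ is disjoint from $\delta$ by push-off. For disjointness of $w$ from $C'(\mathcal{I}')$ I will adapt the push-off analysis from the proof of part ii) of Lemma~\ref{lem:structure-wings}: with a joint choice of push-off depths for the compatible patterns $\mathcal{I}, \mathcal{I}'$, an essential intersection of $w$ with the push-off of an $\mathcal{I}'$-interval $K'$ is forced exactly when the depth of $K'$ lies strictly between those of $J$ and $I$, and this happens in the joint nesting poset precisely when $I \subsetneq K' \subsetneq J$. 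Under the contradictory assumption no such $K'$ exists, so $w$ can be made disjoint from $C'(\mathcal{I}')$.

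Once $\ell$ lies in the complement of the filling disc system $C'(\mathcal{I}') \cup \{\delta\}$, which on $\partial V$ is a disjoint union of discs since $V$ is a handlebody, $\ell$ is nullhomotopic on $\partial V$, hence trivial in $\pi_1(V)$, contradicting $[\ell] \neq 1$. The main obstacle I anticipate is the push-off step: since $I \subsetneq J$ is not assumed directly nested, I expect to have to verify carefully that intermediate $\mathcal{I}$-intervals between $I$ and $J$, as well as $\mathcal{I}'$-intervals contained inside one of the two wings but disjoint from $I$, can both be pushed outside the depth range traced by $w$ so as not to obstruct it.
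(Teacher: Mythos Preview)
Your argument is correct and is the same as the paper's, phrased as the contrapositive: the paper observes that essentiality makes the wing $w$ non--$V$-trivial, hence it cannot be disjoint from $C'(\mathcal{I}')$ (since $C'(\mathcal{I}')\cup\{\delta\}$ is filling), and then notes that the only source of essential intersections is an interval $K\in\mathcal{I}'$ with $I\subset K\subset J$; you run the same three steps in reverse order. One small correction: the complement of a filling disc system on $\partial V$ is a union of planar (holed-sphere) surfaces, not a disjoint union of discs, so $\ell$ need not be nullhomotopic on $\partial V$---but your intended conclusion that $\ell$ is trivial in $\pi_1(V)$ still holds, since each such planar piece bounds a $3$--ball in $V$.
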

\begin{proof}
  By the definition of essential pattern,
  the wings of $I \subset J$
  define nontrivial elements of $\pi_1(V)$. Hence, they cannot be
  disjoint from $C'(\mathcal{I}')$ (as, if they were, they would be disjoint
  from the filling meridian system $C'(\mathcal{I}')\cup\{\delta\}$, showing that they
  define trivial elements in $\pi_1(V)$). This is only possible if there is
  an interval $K$ as desired (again, non-nested intervals do not
  contribute intersections; compare Figure~\ref{fig:wings}).
\end{proof}
\begin{corollary}\label{cor:bound-nesting-depth}
  There is a number $D>0$ with the following property.

  Suppose that $C$ is a cut system, and $\mathcal{I}$ is an essential
  pattern for $C$.  Let $\mathcal{I}'$ be a pattern for $C'$ which is
  compatible with $\mathcal{I}$.

  Suppose that $I_1 \subsetneq I_2 \subsetneq \cdots \subsetneq I_k$
  is a chain of intervals in $\mathcal{I}$ with $k \geq D$. Then there
  is an interval $K \in \mathcal{I}'$ so that $I_1 \subset K \subset
  I_k$.
\end{corollary}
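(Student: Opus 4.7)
The plan is to combine Lemma~\ref{lem:nontrivial-waves-intersected} with a pigeonhole argument applied to the components of $C(\mathcal{I})$. The key observation is that the number of curves in $C(\mathcal{I})$ is uniformly bounded: since $C(\mathcal{I})\cup\{\delta\}$ is a filling disc system in the handlebody $V$, a standard topological count gives an upper bound $N=N(V)$ on the number of components of $C(\mathcal{I})$ that depends only on the genus (for a genus-$g$ handlebody, any disjoint collection of pairwise non-isotopic essential meridians has at most $3g-3$ elements). I set $D := N + 1$.

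By property P) of patterns, each active interval of $\mathcal{I}$ lies on a unique chain, and each essential chain defines exactly one curve of $C(\mathcal{I})$. This produces a well-defined assignment from active intervals of $\mathcal{I}$ to components of $C(\mathcal{I})$. Applied to the given chain $I_1\subsetneq I_2\subsetneq\cdots\subsetneq I_k$ (which I take to consist of active intervals, since otherwise the hypothesis of Lemma~\ref{lem:nontrivial-waves-intersected} is inaccessible), and using $k\geq D = N+1$, the pigeonhole principle furnishes indices $a<b$ such that $I_a$ and $I_b$ belong to the same curve of $C(\mathcal{I})$.

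Now I apply Lemma~\ref{lem:nontrivial-waves-intersected} to the nested pair $I_a\subsetneq I_b$: since $\mathcal{I}$ is essential by hypothesis and $\mathcal{I}'$ is compatible with $\mathcal{I}$, the lemma produces an interval $K\in\mathcal{I}'$ with $I_a\subset K\subset I_b$. Chaining the strict inclusions $I_1\subseteq I_a$ and $I_b\subseteq I_k$ then yields $I_1\subset K\subset I_k$, which is the desired conclusion. I do not expect a serious obstacle here; the argument is essentially a pigeonhole application of an already established lemma, and the only point requiring care is the justification of the uniform bound on $|C(\mathcal{I})|$, which follows from standard facts about filling disc systems in $V$.
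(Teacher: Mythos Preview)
Your proof is correct and follows exactly the same strategy as the paper: apply pigeonhole to the bounded number of curves in $C(\mathcal{I})$ to find two nested intervals lying on the same curve, then invoke Lemma~\ref{lem:nontrivial-waves-intersected}. The paper uses the bound $g$ on the number of curves in a cut system rather than your $3g-3$, but this is immaterial, and your explicit remark that the $I_i$ should be active is a point the paper leaves implicit.
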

\begin{proof}
  Since a cut system has at most $g$ curves, if $D$ is large enough,
  there will be indices $i, j$ so that $I_i, I_j$ lie on the same
  curve of $C(\mathcal{I})$. Then Lemma~\ref{lem:nontrivial-waves-intersected}
  applies and yields the desired interval.
\end{proof}

\subsection{Proof of undistortion}
Before we can prove the main theorem, we need the following two results that
connect patterns to usual subsurface projections.

\begin{lemma}\label{lem:projection-compatibility}
  Suppose that $C$ is a cut system and that $C'$ is a cut system which is
  disjoint from $C$. Then for any $K>0$ there is a constant $L=L(K)>0$ so that
  the following holds.

  Let $\mathcal{I}$ be an essential pattern for $C$, and let
  $\mathcal{I}'$ be a compatible, essential pattern for $C'$.  Suppose
  that $a$ is a properly embedded arc in $\partial V - (\delta\cup C(\mathcal{I}))$.
  Assume that $i(a, C') \leq K$.  
  Then,
  \[ i(a, C'(\mathcal{I}')) < L \]
\end{lemma}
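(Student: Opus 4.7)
The plan is to decompose $i(a,C'(\mathcal{I}'))$ into contributions from the $C'$-arc portions and from push-offs of active $\mathcal{I}'$-intervals in the chains of $C'(\mathcal{I}'))$, and to bound each separately. After isotoping so that $a$ is in minimal position with $C'$ and placing the patterns in compatible nested push-off position, the $C'$-arc contribution is immediately bounded by $i(a,C')\le K$; the bulk of the work is bounding the $\mathcal{I}'$-contribution.

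Every intersection of $a$ with a pushed-off active $\mathcal{I}'$-interval $I'$ lies in the thin push-off band near $\delta$ and thus corresponds to a transverse crossing in $a\cap\delta$ at an interior point of $I'$, on the side to which $I'$ is pushed off. Fix such a crossing point $p\in a\cap\delta$: by compatibility, the intervals in $\mathcal{I}\cup\mathcal{I}'$ containing $p$ on a given side of $\delta$ form a linearly ordered nested chain, and since $a$ avoids $C(\mathcal{I})$, this chain contains no active $\mathcal{I}$-interval on the side that $a$ traverses. Applying Corollary~\ref{cor:bound-nesting-depth} with the roles of $\mathcal{I}$ and $\mathcal{I}'$ interchanged --- valid because both patterns are essential and compatibility is symmetric --- bounds the number of active $\mathcal{I}'$-intervals in any such chain by $D$. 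Consequently each $p$ contributes at most $2D$ intersections of $a$ with active $\mathcal{I}'$-interval push-offs (summing over both sides of $\delta$).

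It remains to bound $|a\cap\delta|$. Partition $a$ into at most $K+1$ sub-arcs $a_j$ by its $C'$-crossings; each $a_j$ is a simple arc in $\Sigma := \partial V \setminus (C(\mathcal{I}) \cup C')$ with endpoints on $C(\mathcal{I})\cup C'$. Since $C,C'$ have size bounded in terms of the constants $k_0$ defining $\mathcal{G}$ and the genus, $\Sigma$ has bounded topological complexity; and since $C(\mathcal{I})\cup\{\delta\}$ is a filling disc system, the multicurve $\delta$ (together with $C'$) cuts $\Sigma$ into discs. A careful analysis of this disc decomposition --- exploiting that $a_j$ is a simple arc forbidden from crossing $C'$ --- yields a bound on $|a_j\cap\delta|$ in terms of the complexity of $\Sigma$ alone. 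Summing over the $K+1$ sub-arcs and combining with the previous paragraph gives the advertised bound $L(K)$.

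The principal obstacle is this last step. Simple arcs in surfaces do not in general have bounded intersection with a filling multicurve (consider $(p,q)$-slope arcs in a torus), so the bound on $|a_j\cap\delta|$ cannot come from topology alone; it must exploit the specific disc decomposition of $\partial V\setminus(C(\mathcal{I})\cup C'\cup\delta)$ together with the restriction that $a_j$ avoids $C'$. Some care is also required in the nested-chain argument to handle inactive intervals correctly, since Corollary~\ref{cor:bound-nesting-depth} as stated yields only active intervals of $\mathcal{I}'$, whereas inactive intervals do not obstruct $a$.
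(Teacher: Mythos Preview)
Your decomposition into $C'$-arc and $\mathcal{I}'$-interval contributions matches the paper's, and both arguments invoke Corollary~\ref{cor:bound-nesting-depth}. The divergence is in how you organise the count of $\mathcal{I}'$-interval crossings, and this is where your argument has a genuine gap.

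You sum over points $p\in a\cap\delta$, bound the contribution of each such $p$ by $2D$ via the corollary (with roles swapped), and are then forced to bound $|a\cap\delta|$. This last step is exactly what you flag as the ``principal obstacle'', and you do not resolve it: the appeal to ``a careful analysis of this disc decomposition'' is a placeholder, and your own torus remark shows that simplicity of $a_j$ together with bounded complexity of $\Sigma$ cannot by themselves give such a bound. Nothing in your sketch explains why the additional constraint $i(a,C')\le K$ should control $i(a,\delta)$, and I do not see how to extract such a bound from the hypotheses.

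The paper sidesteps this detour entirely. It sums over intervals of $\mathcal{I}'$ rather than over points of $a\cap\delta$: each interval contributes a bounded number of intersections with $a$ (the paper says two), so the task becomes bounding how many $\mathcal{I}'$-intervals can contribute at all. Since the interior of $a$ misses every active $\mathcal{I}$-interval, the only $\mathcal{I}'$-intervals that are not shielded from $a$ are those in a nested $\mathcal{I}'$-chain unpunctured by any $\mathcal{I}$-interval; Corollary~\ref{cor:bound-nesting-depth} bounds the depth of any such chain. No estimate on $|a\cap\delta|$ is needed.

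So the two approaches are in a sense dual --- you sum over crossings of $\delta$, the paper sums over intervals --- but only the paper's direction closes, because the interval count is directly controlled by the corollary while your crossing count is not.
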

  \begin{figure}
	\centering
	\includegraphics[width=0.8\textwidth]{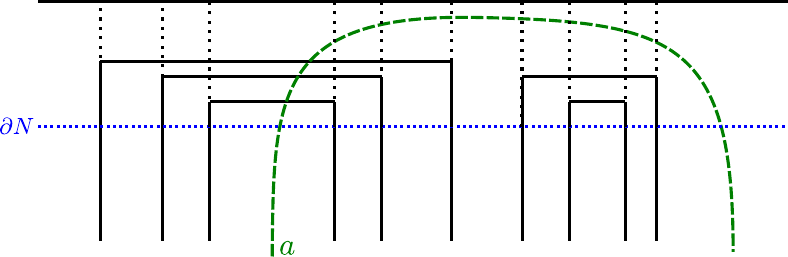}
	\caption{Our setup in the proof of Lemma~\ref{lem:projection-compatibility}. 
		Here, $C'(\mathcal{I}')$ is shown as solid lines, but $C'$ continues as vertical lines (drawn dashed) beyond the pieces used in $C'(\mathcal{I}')$. The rest of $C'(\mathcal{I}')$ consists of horizontal segments corresponding to the intervals in $\mathcal{I}'$. 
		For property (1): a subarc of $a$ which enters and leaves $N$ is forced to intersect $C'$, or is not in minimal position with respect to  $C'(\mathcal{I}')$.
	}
	\label{fig:nbigon}
\end{figure}

  \begin{figure}
    \centering
    \includegraphics[width=0.8\textwidth]{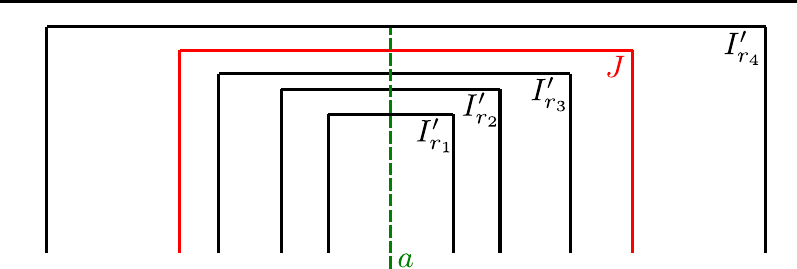}
    \caption{Property (2) in the proof of Lemma~\ref{lem:projection-compatibility}.
      As before, $C'(\mathcal{I}')$ is drawn in solid black. The arc $a$ intersects nested intervals in $\mathcal{I}'$. If the number of such intervals is too large, Corollary~\ref{cor:bound-nesting-depth} yields the existence of a nested interval $J$ of $\mathcal{I}$, which is then forced to intersect $a$.
}
    \label{fig:intersections}
  \end{figure}

\begin{proof}
  The first step of the proof is to choose suitable representatives of the isotopy classes
  of $C(\mathcal{I}), C'(\mathcal{I}')$ and $a$.

  To this end, consider an embedded collar neighbourhood 
  \[ N = (\delta^+ \cup \delta^-) \times [0,1) \] 
  of $\delta^+ \cup \delta^-$. We call the
  direction of the first coordinate \emph{horizontal}, and the other one 
  \emph{vertical}. In particular, the copies of $\delta$ forming the boundary of
  $\partial V - \delta$ are horizontal.

  Up to isotopy, we may assume that the intersections of $C, C'$ with
  the neighbourhood consist only of vertical segments.  Furthermore, we choose 
  pushoff positions of $C(\mathcal{I}), C'(\mathcal{I}')$, which are disjoint from each other and intersect
  $N$ in a union of vertical segments (which are subsegments of the
  intersections of $C, C'$ with $N$), and horizontal segments
  (corresponding to the pushed-off intervals in $\mathcal{I},
  \mathcal{I}'$). Compare Figure~\ref{fig:nbigon} for an example of such a local picture.

	\medskip
	Now consider the arc $a$, put in minimal position with respect to $C'$. We claim that (after possibly 
	changing $a$ by a further isotopy), the following two properties hold:
	\begin{enumerate}
		\item If $a'$ is a subarc of $a$ in $N$, with both endpoints on $\partial N$, then $a'$ intersects $C'$.
		\item If $a'$ is a subarc of $a$, contained in $N$ and with interior disjoint from $C'$, then $a'$ intersects
			$C'(\mathcal{I}')$ in at most $D+2$ points (where $D$ is the constant from Corollary~\ref{cor:bound-nesting-depth}.
	\end{enumerate}
	Before addressing why these claims hold, let us finish the proof of the lemma assuming the claims.
    Namely, as we assume that
	$i(a, C') < K$, we can write
	\[ a = a_1 \ast \cdots \ast a_k, \quad k \leq K \]
	into at most $K$ subsegments whose interiors are disjoint from $C'$. In particular, the endpoints
	of the $a_i$ contribute at most $K+1$ intersection points of $a$ with $C'(\mathcal{I}')$, and
	any further intersection 
	of $a$ with $C'(\mathcal{I}')$ which does not correspond to one of the endpoints of the $a_i$ lies
	on one of the horizontal segments of  $C'(\mathcal{I}')$ in $N$.
	
	By (1), each $a_i$ can be written as a concatenation of at most three subsegments, each of which is
	either completely contained in $N$, or has interior disjoint from $N$. The intersection points we have not
	yet accounted for are all contained in subsegments lying in $N$. By (2), any such subsegment intersects $C'(\mathcal{I}')$ in at most $D+2$ points. This shows that 
	$L = (K+1) + 3K(D+2) + 1$
	has the property claimed in the lemma.
	
	\medskip Finally, we explain why the properties (1) and (2) can be arranged. First, suppose that $a' \subset a$
	is a subarc in $N$ with endpoints in $\partial N$, and so that $a'$ does not intersect $C'$. Then, $a'$ bounds, 
	together with a segment in $\partial N$, a bigon $B$. Since the intersection of $C'$ with $N$ consists of
	vertical segments, and $a'$ is disjoint from $C'$, the whole bigon $B$ is disjoint from $C'$ and $C'(\mathcal{I}')$. Hence, we can perform an isotopy pushing $a$ over such a bigon, reducing the number of
	intersections with $\partial N$, and without introducing intersection with either $C'$ nor $C'(\mathcal{I}')$.
	After a finite number of such modifications, property (1) holds.
	
	Next, consider a subarc $a'$ as in (2). Any intersection of the interior of $a'$ with $C'(\mathcal{I}')$ then lies on one of the horizontal segments corresponding to an pushed-off interval in $\mathcal{I}'$. To show (2), we need
	to show that there are at most $D$ such segments. So, suppose that there are at least $D+1$ such intervals, call them $I'_1, \ldots, I'_{D+1}$. Note that since the interior of $a'$ is disjoint from $C'$, and therefore cannot cross the vertical lines bounded by the endpoints of the $I'_i$, all these intervals are nested. Hence, Corollary~\ref{cor:bound-nesting-depth} applies (with the roles of $\mathcal{I},\mathcal{I}'$ reversed), and guarantees that there is an interval $J$ of $\mathcal{I}$ nested between two of the $I'_i$. But then $a'$ intersects $C(\mathcal{I})$ in the pushed-off copy of $J$, which contradicts the assumption on $a$. Compare Figure~\ref{fig:intersections} for this scenario.
\end{proof}

%

We are now ready to prove the main theorem. The core is the following lemma.
\begin{lemma}\label{lem:generate-compatible-essential-patterns}
  Suppose that $C_n$ is a sequence of cut systems, so that consecutive $C_i$ are
  disjoint. Then there is a sequence $\mathcal{I}_n$, so that for each $n$,
  $\mathcal{I}_n$ is an essential pattern for $C_n$, and the patterns
  $\mathcal{I}_n, \mathcal{I}_{n+1}$ are compatible.
\end{lemma}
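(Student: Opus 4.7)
My plan is to proceed in two stages: first, construct an initial sequence of compatible surgery patterns using Lemma~\ref{lem:generating-patterns}; second, essentialise them via wave exchanges, cascading along the sequence whenever compatibility is broken. I will assume the sequence $(C_n)$ is finite (say $n = 1, \ldots, N$), which is the case in the intended application to paths in $\mathcal{G}$.

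For the first stage, I pick any surgery pattern $\mathcal{I}_1^{(0)}$ for $C_1$ via Lemma~\ref{lem:generating-patterns}(i). Inductively, given a surgery pattern $\mathcal{I}_n^{(0)}$ for $C_n$ compatible with $\mathcal{I}_{n-1}^{(0)}$, I apply part (ii) of the same lemma with the roles $(C, C') = (C_n, C_{n+1})$ to obtain a surgery pattern $\mathcal{I}_{n+1}^{(0)}$ for $C_{n+1}$ compatible with $\mathcal{I}_n^{(0)}$. This produces a sequence of surgery patterns enjoying consecutive compatibility, although the individual patterns need not be essential.

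For the second stage, I define the following operation on the current sequence: choose any $\mathcal{I}_n$ admitting a wave exchange, perform it, then restore consecutive compatibility by cascading. Concretely, if the new $\mathcal{I}_n$ is incompatible with $\mathcal{I}_{n-1}$, then Lemma~\ref{lem:adjacent-exchange} produces a wave exchange on $\mathcal{I}_{n-1}$ which restores compatibility with the new $\mathcal{I}_n$; perform it. If this in turn breaks compatibility with $\mathcal{I}_{n-2}$, cascade further, and handle the right-hand side symmetrically. Crucially, each cascade step is itself a genuine wave exchange on directly nested active intervals with wave wings, so by Lemma~\ref{lem:exchange-is-pattern} it strictly decreases the length of exactly one pattern in the tuple, and hence decreases the total length $\sum_n L(\mathcal{I}_n)$.

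To show termination, observe that for each fixed $C_n$ the set of intervals on $\delta^+ \cup \delta^-$ with endpoints in $C_n \cap \delta$ is finite, so there are only finitely many patterns for each $C_n$ and hence only finitely many tuples $(\mathcal{I}_1, \ldots, \mathcal{I}_N)$; in particular $\sum_n L(\mathcal{I}_n)$ takes only finitely many values. Both the inner cascade and the outer improvement operation strictly decrease this quantity, so both terminate. At termination no pattern admits any wave exchange, hence by Corollary~\ref{cor:wave-exchange-terminates} every $\mathcal{I}_n$ is essential, and consecutive compatibility holds by construction. The main technical point to be careful about is that the cascade is well-defined, i.e.\ the exchange produced by Lemma~\ref{lem:adjacent-exchange} is itself a bona fide wave exchange on directly nested active intervals and can therefore be continued further if it again breaks compatibility; this is precisely the content of that lemma, so the cascade terminates simply because it strictly decreases total length.
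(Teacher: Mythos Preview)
Your proof follows essentially the same two-stage strategy as the paper: first build a chain of compatible surgery patterns via Lemma~\ref{lem:generating-patterns}, then essentialise by wave exchanges while maintaining consecutive compatibility through Lemma~\ref{lem:adjacent-exchange}, with termination coming from strict length decrease. The only difference is organisational: the paper always picks the \emph{minimal} index $n$ whose pattern is not yet essential, so that $\mathcal{I}_{n-1}$ is already essential and the second alternative of Lemma~\ref{lem:adjacent-exchange} cannot occur on the left---leftward compatibility is then automatic and the cascade propagates only to the right. Your bidirectional cascade from an arbitrary index is a harmless variant, and your explicit total-length argument for termination is effectively the same as the paper's appeal to Corollary~\ref{cor:wave-exchange-terminates}.
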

\begin{proof}
  As a first step, we apply Lemma~\ref{lem:generating-patterns}~i) and~ii) to obtain a sequence of
  surgery patterns $\mathcal{I}^1_n$ for $C_n$, so that for any $n$, the patterns $\mathcal{I}^1_n, \mathcal{I}^1_{n+1}$ are compatible. 
  The strategy of the proof will be to inductively apply Lemma~\ref{lem:adjacent-exchange} to obtain
  sequences of patterns $\mathcal{I}^k_n$ where the next sequence $(\mathcal{I}^{k+1}_n)$ is obtained from the previous one $(\mathcal{I}^k_n)$ by applying a wave exchange to a single pattern $\mathcal{I}^k_n$. For
  brevity we will simply say that the new sequence is obtained from the old one by a single wave exchange.
  
  To begin, suppose that $\mathcal{I}^1_1$ admits a wave exchange, say with wings $w_1, w_2$.
  As $\mathcal{I}^1_2$ is compatible with $\mathcal{I}^1_1$, we can apply Lemma~\ref{lem:adjacent-exchange}.
  There are two possible cases:
  First, if $\mathcal{I}^1_1(w_1, w_2)$ is compatible with $\mathcal{I}^1_2$ we can set
  $\mathcal{I}^2_1 = \mathcal{I}^1_1(w_1, w_2)$ and $\mathcal{I}^2_i = \mathcal{I}^2_i$ for $i>1$. This new
  sequence is obtained from the old one by a single wave exchange as desired.
  
  The second case is that there is a wave exchange $\mathcal{I}^1_2(w'_1, w'_2)$ which is compatible with 
  $\mathcal{I}^1_1$. In this case, we now need to apply Lemma~\ref{lem:adjacent-exchange} again, this time
  for the exchange $\mathcal{I}^1_2(w'_1, w'_2)$ and the pattern $\mathcal{I}^1_3$. If the first case happens
  here, we can define $\mathcal{I}^2_2 = \mathcal{I}^1_2(w'_1, w'_2)$ and $\mathcal{I}^2_i = \mathcal{I}^2_i$ for $i\neq 2$, as the exchange $\mathcal{I}^1_2(w'_1, w'_2)$ is now compatible with both its neighbours $\mathcal{I}^1_1, \mathcal{I}^1_3$. 
  
  Otherwise, we need to restart the process with a wave exchange of $\mathcal{I}^1_3$. Observe that
  since the sequence $\mathcal{I}^1_n$ has finite length, this process terminates either with the first case of 
  Lemma~\ref{lem:adjacent-exchange} at some index $i$, or by performing a wave exchange on the last element of the 
  sequence.
  
  In both cases, we will have constructed $\mathcal{I}^2_n$, which is obtained from $\mathcal{I}_n^1$ by a single wave exchange. If
  $\mathcal{I}^2_1$ still admits a wave exchange, we restart the process at the beginning, defining
  $\mathcal{I}^3_n$. After finitely many iterations, we obtain a sequence of patterns $\mathcal{I}^{m_1}_n$
  so that $\mathcal{I}^{m_1}_1$ does not admit wave exchanges. Note that in this case, Lemma~\ref{lem:adjacent-exchange} implies that \emph{any} wave exchange of $\mathcal{I}^{m_1}_2$ is compatible
  with $\mathcal{I}^{m_1}_1$ (as otherwise, the latter would admit a wave exchange).
  
  Thus, we can restart the procedure, trying to perform wave exchanges at $\mathcal{I}^{m_1}_2$. Arguing as above,
  after finitely many steps we will arrive at a sequence $\mathcal{I}^{m_2}_n$ where both $\mathcal{I}^{m_2}_1, \mathcal{I}^{m_2}_2$ do not admit wave exchanges. After finitely many steps, this yields desired sequence.
\end{proof}

We are almost ready to start the proof. The last missing ingredient is the following
\begin{lemma}\label{lem:cutsystem-sequence}
	Suppose that $(D,l), (D', l') \in \mathcal{G}(\delta)$ are two vertices.
	Then there is a geodesic $(C_n, l_n), n=1, \ldots, N$ in
	$\mathcal{G}$ connecting $(D,l)$ to $(D', l')$, and so that for each $1<i<N$ the 
	meridian system $C_i$ is a cut system.
\end{lemma}
\begin{proof}
	Begin by taking any geodesic $(C_n, l_n)$ joining $(D,l)$ to $(D', l')$. Let $1<i<N$ be the first
	index so that $C_i$ is not a cut system. By definition, $C_i$ is a filling meridian system, and
	therefore contains a cut system $C'_i \subset C_i$. Observe that $(C'_i, l_i)$ is a vertex of
	$\mathcal{G}$, which is still connected to both $(C_{i-1},l_{i-1})$ and $(C_{i+1},l_{i+1})$. 
	Replacing $(C_i,l_i)$ by $(C'_i, l_i)$ yields a geodesic with fewer vertices whose filling meridian
	system is not a cut system. By induction, the lemma follows.
\end{proof}

Now let $(D,l), (D',l')$ be arbitrary vertices of $\mathcal{G}(\delta)$. Up to replacing the endpoints
by vertices of distance $1$ (as in the proof of Lemma~\ref{lem:cutsystem-sequence}), we may assume that
$D, D'$ are the unions of $\delta$ with cut systems $C, C'$. Take a geodesic $(C_i,l_i)$ in $\mathcal{G}$ joining $(D,l)$ to $(D',l')$ as guaranteed by Lemma~\ref{lem:cutsystem-sequence}.

Apply Lemma~\ref{lem:generate-compatible-essential-patterns} to $C = C_1, C_2, \ldots, C_{N-1}, C_N = C'$ obtain a sequence $\mathcal{I}_n$ of patterns as in that lemma. 

Observe that for any $1\leq n\leq N$, the system $C_n(\mathcal{I}_n)$
is a meridian system which is disjoint from $\delta$. We let $\hat{C}_n$ be the filling
meridian system obtained from $C_n(\mathcal{I}_n)$ by adding every component of $\delta$
which is not already homotopic to a component of $C_n(\mathcal{I}_n)$. Observe that since the 
empty pattern is the only pattern for a cut system disjoint from $\delta$, we have $C_1(\mathcal{I}_1) = C_1 = C, 
C_N(\mathcal{I}_N) = C_N = C'$ and therefore 
$\hat{C}_1 = D, \hat{C}_N = D'$.

For each $n$, consider now
\[ A'_n = \left(\partial V - \hat{C}_n\right) \cap l_n, \]
and choose for each $n$, a subset $A_n\subset A_n'$ containing a representative of each
homotopy class of arc in $A_n'$. Define
\[ \Gamma_n = \hat{C}_n \cup A_n, \] and note that each
$\Gamma_n$ defines a vertex of $\mathcal{R}(\delta)$ as $\hat{C}_n$ contains $\delta$. Observe that
each arc $a\in A_n$ satisfies the prerequisites of
Lemma~\ref{lem:projection-compatibility} (for $C=C_n, C'=C_{n+1}, \mathcal{I}=\mathcal{I}_n, \mathcal{I}'=\mathcal{I}_{n+1}$ and $K$ depending on the constant used in the definition
of $\mathcal{G}$), and therefore the intersection number of $a$ with $\hat{C}_{n+1}$ can be bounded
in terms of the $L$ given by that lemma. Since the number of arcs in $A_n$ is bounded by
the topology of the surface, and $C_n(\mathcal{I}_n), C_{n+1}(\mathcal{I}_{n+1})$ are disjoint, 
this implies that there is a number $M>0$ so that $i(\Gamma_n, \Gamma_{n+1})<M$ for all $n$.
Hence, $\Gamma_n$ defines a path in $\mathcal{R}(\delta)$ of length
bounded above linearly by $N$.

Furthermore, this path connects the image of $(C_1,
l_1)$ and $(C_N, l_N)$ under the map
$U:\mathcal{G}(\delta)\to\mathcal{R}(\delta)$.  Hence, the distance between
$(C_1, l_1)$ and $(C_N, l_N)$ is bounded above by a linear function in $N=d_{\mathcal{G}}((D,l), (D',l'))$, showing
Theorem~\ref{thm:undistorted-stabilisers}.\qed

\begin{remark}\label{rem:compression-adapt}
  In order to prove the analogue of
  Theorem~\ref{thm:undistorted-stabilisers} for compression body
  groups, only two minor modifications are necessary. First, in
  Lemma~\ref{lem:generating-patterns}, we have to do full surgeries,
  in order to keep the systems filling meridian systems for the
  compression body. This is possible by Lemma~\ref{lem:surgery-basics}. Second, the model
  $\mathcal{G}$ needs to be adapted to compression bodies so that the loop
  $l$ is filling together with the filling meridian system (the model $\mathcal{R}$
  need not be modified).
\end{remark}

\section{Primitive and Annulus Stabilisers}
In this section we encounter distorted stabilisers in the handlebody group. There
will be two classes of such stabilisers that we consider -- those of primitive 
curves, and those of primitive annuli.

Recall that a curve $\alpha$ on the boundary of a handlebody $V$ is called
\emph{primitive} if it defines a primitive element in the (free) fundamental
group $\pi_1(V)$. Equivalently, $\alpha$ is primitive if there is a meridian
$\delta$ which intersects $\alpha$ in a single point.

A \emph{primitive annulus} will mean a pair $\alpha_1, \alpha_2$ both of which
are primitive, and which bound a properly embedded annulus $A$ in $V$.

Before beginning the discussion in earnest, we will first summarise the results
that are proven in this section. In the statements of these results we assume
that the stabilisers in question are finitely generated; this is well-known
to experts, but we will give a quick proof below.

\begin{theorem}\label{thm:primstab-distorted}
  Let $V_g, g\geq 3$ be a handlebody of genus at least three, and let
  $\alpha\subset V_g$ be primitive. Then the stabiliser of $\alpha$
  is exponentially distorted.
\end{theorem}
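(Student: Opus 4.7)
The plan is to prove matching exponential upper and lower bounds on the distortion of $\Stab(\alpha)$ in $\Mcg(V_g)$. For the lower bound I reduce to Handel--Mosher~\cite{HM}: in $\mathrm{Out}(F_g)$ with $g\ge 3$ the stabiliser of a primitive conjugacy class is exponentially distorted. The homomorphism $\Phi\colon\Mcg(V_g)\to\mathrm{Out}(F_g)$ induced by the action on $\pi_1(V_g)$ is coarsely Lipschitz and sends $\Stab(\alpha)$ into the stabiliser of $[\alpha]$, so it suffices to exhibit a sequence $f_n\in\Stab(\alpha)$ with $\|f_n\|_{\Stab(\alpha)}$ linear in $n$ but $\|f_n\|_{\Mcg(V_g)}$ only logarithmic in $n$. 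Handel--Mosher supply witnesses $\bar f_n\in\mathrm{Out}(F_g)$ with these properties; the task is to lift them to $\Mcg(V_g)$ at comparable word length. Since $\ker\Phi$ is infinitely generated, arbitrary lifts have no controlled cost, so I will work only with Handel--Mosher witnesses that are iterated composites of Nielsen-type automorphisms, each of which admits a canonical realisation as a handle slide in $\Mcg(V_g)$ whose composition behaviour matches that in $\mathrm{Out}(F_g)$ up to a multiplicative constant. This is the same lifting principle used in~\cite{HH2} to analyse the Dehn function of the handlebody group.

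For the upper bound I use a surgery argument analogous to Section~\ref{sec:good-stabs}. Fix a reference meridian $\delta_0$ with $i(\alpha,\delta_0)=1$, which exists by primitivity of $\alpha$. Given $f\in\Stab(\alpha)$ with $\|f\|_{\Mcg(V_g)}\le n$, the meridian $f(\delta_0)$ still meets $\alpha$ in a single point and satisfies $i(\delta_0,f(\delta_0))\le K^n$ for a constant $K$ depending only on a fixed generating set. I transform $f(\delta_0)$ back to $\delta_0$ by iterated wave surgery; since $\alpha\cap\delta_0$ is a single point, each wave surgery can be arranged to be disjoint from $\alpha$ and realised by a single $\alpha$-preserving generator -- a twist along a meridian disjoint from $\alpha$, or a handle slide preserving $\alpha$. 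After at most $K^n$ such steps, $f$ has been modified into an element of $\Stab(\alpha)\cap\Stab(\delta_0)$. Cutting $V_g$ along the disc bounded by $\delta_0$ reduces this residual element to a mapping class of a genus-$(g-1)$ handlebody fixing a properly embedded arc coming from $\alpha$; combining Theorem~\ref{thm:undistorted-stabilisers} applied to $\Stab(\delta_0)$ with an inductive analysis of this residual stabiliser gives that its word length in $\Stab(\alpha)$ is linear in $n$. Summing yields the desired exponential bound $\|f\|_{\Stab(\alpha)}\le C e^{Cn}$.

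The main obstacle is the lower-bound construction. Handel--Mosher's exponential witnesses are built from relative train-track machinery in $\mathrm{Out}(F_g)$, which has no direct counterpart in the handlebody group, and a naive lift picks up elements from the infinitely generated kernel of $\Phi$ that destroy the logarithmic growth in $\Mcg(V_g)$. The workaround is to restrict attention to Handel--Mosher examples that are already composites of Nielsen moves and their iterates, for which explicit handle-slide realisations exist and for which word lengths in $\mathrm{Out}(F_g)$ and $\Mcg(V_g)$ agree up to a multiplicative constant. Verifying that this restricted family still witnesses the Handel--Mosher linear lower bound inside the stabiliser of $[\alpha]$ is where the argument concentrates its technical work.
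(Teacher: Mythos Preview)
Your upper-bound sketch is broadly in the spirit of the paper's argument (the surgery machinery culminating in Corollary~\ref{cor:surgery-length-bound} and Lemmas~\ref{lem:repair-ropes}--\ref{lem:surgery-step}), though the paper works with full cut systems rather than a single meridian~$\delta_0$. This matters: the avoidance step---choosing a wave disjoint from~$\alpha$---relies on there being at least two distinct waves, which holds for cut systems but is not guaranteed when surgering a single meridian towards another. Your endgame, reducing to $\Stab(\delta_0)\cap\Stab(\alpha)$ and invoking an ``inductive analysis of this residual stabiliser'', is also underspecified. These issues are repairable, and your outline would converge to essentially the paper's proof once you pass to cut systems.

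The genuine gap is in the lower bound. You correctly note that the Handel--Mosher witnesses $\bar f_k$ must be lifted to $\Mcg(V_g)$ at comparable cost, but the lift has to land in $\Stab(\alpha)$, not merely in $\Mcg(V_g)$. Realising $\bar f_k$ as a product of handle slides only guarantees that the conjugacy class $[\alpha]\in\pi_1(V_g)$ is preserved; there is no reason such a product should fix the simple closed curve~$\alpha$ on~$\partial V_g$. The paper resolves this with a specific geometric construction: embed $W=X\times[0,1]$ (with $X$ a once-holed torus) as a sub-handlebody and attach the remaining handles along $\partial X\times[0,1]$. The pseudo-Anosov $\psi$ (inducing the irreducible automorphism $\Theta$ of $F_2$) and the handle slide $P$ are both supported away from $X^1=X\times\{1\}$, so $f_k=\psi^k P\psi^{-k}$ restricts to the identity on $X^1$ and in particular fixes the curve $\alpha^1=\alpha\times\{1\}$. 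This ``second copy'' trick is the missing idea; your proposal to ``restrict to composites of Nielsen moves and their iterates'' does not supply it, and without it the sequence you produce need not lie in $\Stab(\alpha)$ at all.
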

The genus requirement in Theorem~\ref{thm:primstab-distorted} is necessary, as the
following proposition shows.
\begin{proposition}\label{prop:primstab-genus-2}
	Let $V_2$ be a handlebody of genus $2$, and let $\alpha$ be primitive.
	Then the stabiliser of $\alpha$ is undistorted.
\end{proposition}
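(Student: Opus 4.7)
The strategy is to adapt the model graph and projection argument of Section~\ref{sec:good-stabs} to primitive curves, exploiting the low complexity of genus $2$. Since $\alpha$ is primitive, choose a meridian $\delta_0$ with $i(\alpha, \delta_0) = 1$; cutting $V_2$ along the disc bounded by $\delta_0$ yields a solid torus in which $\alpha$ becomes an arc. I would define a subgraph $\mathcal{G}(\alpha) \subset \mathcal{G}$ spanned by vertices $(C, l)$ whose cut system contains some meridian $\delta$ with $i(\alpha, \delta) = 1$ and with the remaining components of $C$ disjoint from $\alpha$. After checking that $\mathcal{G}(\alpha)$ is connected and that $\Stab(\alpha)$ acts on it properly and cocompactly, the Svarc--Milnor lemma identifies $\mathcal{G}(\alpha)$ with $\Stab(\alpha)$ up to quasi-isometry, so the task reduces to showing that $\mathcal{G}(\alpha)$ is coarsely geodesically embedded in $\mathcal{G}$.

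Given a path $(C_n, l_n)$ in $\mathcal{G}$ between two vertices of $\mathcal{G}(\alpha)$, I would project it to a comparable-length path in $\mathcal{G}(\alpha)$ via a surgery procedure analogous to Lemma~\ref{lem:generating-patterns}. The surgery is carried out relative to $\alpha$: at each step, wave surgeries guided by $\alpha$ modify $C_n$ into a cut system containing an $\alpha$-dual meridian, and consecutive surgery outcomes remain compatible in the sense of Section~\ref{sec:good-stabs} through an analogue of Lemma~\ref{lem:adjacent-exchange}. Theorem~\ref{thm:undistorted-stabilisers} already handles the ``global'' moves fixing a given $\alpha$-dual meridian, so the only new work is to control the surgery needed to switch between different $\alpha$-dual meridians.

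The main obstacle, and the reason the argument works specifically in genus $2$, is bounding the size of this surgery step. In genus $g \geq 3$ the analogous reduction must fail: Handel--Mosher automorphisms fixing a primitive conjugacy class have exponentially growing word length in the stabiliser while remaining linear in the ambient group, realising the distortion of Theorem~\ref{thm:primstab-distorted}. In genus $2$, however, the complementary subsurface $\partial V_2 \setminus \alpha$ is a twice-punctured torus, whose arc and curve complex admits an explicit low-dimensional description, and the set of meridians dual to $\alpha$ is correspondingly rigid under the action of $\Stab(\alpha)$. I would exploit this rigidity to show that any cut system becomes $\alpha$-compatible after a uniformly bounded number of surgery moves, producing the required linear bound on the projected path length and completing the undistortion argument.
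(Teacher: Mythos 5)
Your overall strategy --- project paths in the ambient model to paths in a $\Stab(\alpha)$-model --- is the right one, and you correctly identify that the mechanism of Theorem~\ref{thm:primstab-distorted} is what obstructs such an argument in higher genus. But the heart of the proof is precisely the step you leave as a promise: ``any cut system becomes $\alpha$-compatible after a uniformly bounded number of surgery moves.'' That is the entire content of the proposition, and ``the arc and curve complex of the twice-punctured torus admits an explicit low-dimensional description'' does not by itself produce it. You would need to say \emph{what} the projection of an arbitrary cut system is, and \emph{why} its ambiguity and its behaviour under disjointness are uniformly bounded.

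The paper's proof does this directly and far more cheaply than the pattern machinery of Section~\ref{sec:good-stabs} you propose to re-run. Given a cut system $\Delta=\{\delta_1,\delta_2\}$, primitivity forces $i(\alpha,\delta_j)$ to be odd for some $j$, so there is a subarc $d\subset\delta_1\cup\delta_2$ joining the two sides of $\alpha$; a boundary component of a regular neighbourhood of $d\cup\alpha$ is a separating meridian $\delta(d)$. The genus-$2$-specific rigidity is then very concrete: a separating meridian in $V_2$ has a \emph{unique} disjoint cut system $\Delta(d)$, and because $\delta(d)$ is disjoint from $\alpha$, the curve $\alpha$ meets $\Delta(d)$ in exactly one point. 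One then checks that different admissible choices $d,d'$ give $\delta(d),\delta(d')$ intersecting in at most four points (hence $\Delta(d),\Delta(d')$ uniformly close), and that the same holds when passing to a disjoint cut system $\Delta'$; this yields a Lipschitz retraction onto $\Stab(\alpha)$. None of this requires the compatible-pattern apparatus, and your sketch neither produces $\Delta\mapsto\Delta(d)$ nor the separating-meridian uniqueness fact that makes it well-defined, so as written there is a genuine gap at the decisive step.
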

In fact, there are more distorted stabilisers in the handlebody group:
\begin{theorem}\label{thm:annulus-distorted}
  Let $V_g$ be a handlebody, and suppose
  that $A$ is a primitive annulus. Suppose that $g\geq 3$ if $A$ is non-separating, or that $g\geq 4$ if $A$ is
  separating. Then the stabiliser of $A$ is exponentially distorted.
\end{theorem}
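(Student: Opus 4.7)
The plan is to follow a two-pronged strategy analogous to the one used for Theorem~\ref{thm:primstab-distorted}: the lower bound via reduction to the Handel--Mosher distortion statement in $\mathrm{Out}(F_n)$ (through Proposition~\ref{prop:intro-cyclic-splitting}), and the upper bound via a surgery argument in an appropriate graph model. A primitive annulus $A$ determines a cyclic splitting of $\pi_1(V)\cong F_g$: in the non-separating case an HNN extension with vertex group of rank $g-1$, and in the separating case an amalgamated product $F_k\ast_{\langle c\rangle} F_{g-k}$ with both factors of rank $\geq 2$. The hypotheses $g\geq 3$ (respectively $g\geq 4$) are exactly those under which all vertex groups have rank at least $2$, which is what Proposition~\ref{prop:intro-cyclic-splitting} requires in order to yield exponential distortion of the cyclic splitting stabiliser in $\mathrm{Out}(\pi_1(V))$.

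For the lower bound, the restriction map $\mathrm{Mcg}(V)\to\mathrm{Out}(\pi_1(V))$ sends $\Stab(A)$ into the stabiliser of this cyclic splitting. It therefore suffices to produce a sequence $\phi_n\in \Stab(A)$ whose handlebody word norm grows linearly in $n$, but whose induced sequence in $\mathrm{Out}(F_g)$ consists of the exponentially distorted witnesses supplied by Handel--Mosher. Following the idea of \cite{HH2}, I would construct the $\phi_n$ as compositions of elementary handlebody homeomorphisms supported on the complementary sub-handlebodies of $V\setminus A$: each elementary piece is built to act as a prescribed Handel--Mosher generator on the vertex group of the splitting while preserving $A$ setwise, and can be taken to have bounded word norm in $\mathrm{Mcg}(V)$.

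For the upper bound, I would work with a graph model $\mathcal{G}(A)$ analogous to $\mathcal{G}$, whose vertices consist of $A$--compatible filling data (filling meridian systems together with a disc-busting loop, in controlled position with respect to $A$), on which $\Stab(A)$ acts cocompactly. Given a length-$n$ path in $\mathcal{G}$ connecting two vertices in $\mathcal{G}(A)$, each intermediate filling system is surgered at $A$ using waves (Lemma~\ref{lem:surgery-basics}) to produce a system that is $A$--disjoint. Unlike the meridian case of Section~\ref{sec:good-stabs}, where the essential-pattern machinery keeps the surgery output under linear control, surgery against a primitive annulus can roughly double the number of intersection arcs at each step, so the length of the projected path in $\mathcal{G}(A)$ is bounded by $O(\exp(n))$, giving the exponential upper bound.

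The main obstacle I expect is the geometric realisation step in the lower bound: because the kernel of $\mathrm{Mcg}(V)\to\mathrm{Out}(\pi_1(V))$ is infinitely generated, arbitrary automorphisms cannot be lifted with controlled word norm. One must carefully design each elementary move so that its action on $\pi_1(V)$ matches a chosen Handel--Mosher generator exactly, while preserving $A$; the genus hypothesis guarantees that each complementary piece of $V\setminus A$ has enough complexity to support the required free group automorphisms as mapping classes.
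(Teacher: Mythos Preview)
Your overall two-pronged strategy matches the paper, but both halves differ in execution, and the upper bound contains a genuine error.

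\textbf{Lower bound.} The paper does not route through Proposition~\ref{prop:cyclic-splitting-distorted}. It simply observes that the same witnesses $f_k=\psi^kP\psi^{-k}$ used for Theorem~\ref{thm:primstab-distorted} already fix a suitable annulus $A$ with $\alpha^1\subset\partial A$ (Lemma~\ref{lem:action-fk}); since $\Stab(A)\subset\Stab(\alpha^1)$, the exponential lower bound on $\|f_k\|_{\Stab(\alpha^1)}$ transfers immediately. Your route via cyclic splittings is logically valid, but your description of the witnesses is off: you write that each elementary piece should ``act as a prescribed Handel--Mosher generator on the vertex group'', yet the Handel--Mosher elements $e_n\mapsto e_n\Theta^k(e_1)$ have exponentially long right-hand side, so spelling them as products of fixed stabiliser generators costs exponential, not linear, length. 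The linear length in $\Mcg(V)$ comes exactly from the conjugation form $\psi^kP\psi^{-k}$, and this is the content of the \cite{HH2} idea you cite; it should be stated explicitly rather than hidden behind ``elementary pieces''.

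\textbf{Upper bound.} You propose surgering each cut system along the path to make it ``$A$--disjoint''. This is impossible: the boundary curves of $A$ are primitive, hence nontrivial in $\pi_1(V)$, and must intersect every cut system. The correct target, and the paper's definition of $\mathcal{G}(A)$, is cut systems meeting each curve of $\partial A$ in exactly one point. The paper also does not project a path vertex-by-vertex with a doubling estimate. Instead it uses the a priori bound $i(C,f(C))\leq a\cdot b^{\|f\|}$ (Proposition~\ref{prop:intersection-growth}) together with the observation that among the waves of $f(C)$ with respect to any intermediate system one can always choose one disjoint from $\partial A$, so that $A$--minimality is preserved along a surgery sequence of length at most $i(C,f(C))$ (Corollary~\ref{cor:surgery-length-bound}). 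Combined with the rope-surgery Lemmas~\ref{lem:repair-ropes} and~\ref{lem:surgery-step}, this yields a path in $\mathcal{R}(A)$ of length polynomial in $i(C,f(C))$, hence exponential in $\|f\|$. Your doubling heuristic could perhaps be made to work once the target is corrected, but the mechanism and the bookkeeping are genuinely different from what the paper does.
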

\begin{remark}
  It is not clear if the genus bound in
  Theorem~\ref{thm:annulus-distorted} is optimal.
\end{remark}
To the knowledge of the author, the analogous statement of Theorem~\ref{thm:annulus-distorted} for
$\mathrm{Out}(F_n)$ is new as well:
\begin{proposition}\label{prop:cyclic-splitting-distorted}
  The stabiliser of a primitive cyclic splitting in $\mathrm{Out}(F_n)$ is exponentially distorted.
\end{proposition}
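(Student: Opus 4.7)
The plan is to mirror the proof of Theorem~\ref{thm:annulus-distorted}, adapted to $\mathrm{Out}(F_n)$. Fix a cyclic splitting $T$ of $F_n$ -- say $F_n = A *_{\langle c\rangle} B$ or $F_n = A *_{\langle c\rangle}$ with $c$ primitive -- and let $\Stab(T)\subset\mathrm{Out}(F_n)$ be its stabiliser. We must produce matching exponential upper and lower bounds on the distortion function.

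For the exponential lower bound, I would adapt the Handel--Mosher construction \cite{HM} of exponentially distorted sequences in the stabiliser $\Stab([c])$ of a primitive conjugacy class, arranging the sequences to preserve all of $T$ rather than merely $[c]$. Concretely, I would take the Handel--Mosher automorphisms to be supported in one of the vertex groups and to fix $c$ pointwise and the other vertex group identically; such elements automatically lie in $\Stab(T)$. Since $\Stab(T)\subset\Stab([c])$, any word expressing $\phi$ in $\Stab(T)$ yields a word of no smaller length in $\Stab([c])$, so the Handel--Mosher exponential lower bound on word length transfers, producing elements $\phi_k$ with $|\phi_k|_{\mathrm{Out}(F_n)} = O(k)$ and $|\phi_k|_{\Stab(T)} \geq e^{\Omega(k)}$.

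For the exponential upper bound, I would use a handlebody-to-$\mathrm{Out}(F_n)$ reduction in the spirit of \cite{HH2}. Realise $T$ as the splitting associated to a primitive annulus $\mathcal{A}\subset V_n$, so that $\Stab(\mathcal{A})\subset\Mcg(V_n)$ surjects onto a finite-index subgroup of $\Stab(T)$. Given $g\in\Stab(T)$ of word length $N$, the plan is to lift $g$ to an element $\tilde g\in\Stab(\mathcal{A})$ of comparable word length, apply Theorem~\ref{thm:annulus-distorted} to express $\tilde g$ as a word in $\Stab(\mathcal{A})$ of length $e^{O(N)}$, and then project back to $\mathrm{Out}(F_n)$, which cannot increase word length.

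The main obstacle is the lifting step. The map $\Mcg(V_n)\to\mathrm{Out}(F_n)$ is surjective but has infinitely generated kernel and can distort distances, so the bound on $\Mcg(V_n)$ does not formally descend to $\mathrm{Out}(F_n)$. The argument therefore requires that for every $g\in\Stab(T)$ one can choose a lift into $\Stab(\mathcal{A})$ whose word length in $\Mcg(V_n)$ is controlled linearly in $N$. This parallels the ``realise automorphisms by handlebody homeomorphisms'' idea used in \cite{HH2} and in the lower bound for Theorem~\ref{thm:intro-distorted}: a geodesic representative of $g$ in $\mathrm{Out}(F_n)$ can be turned into a sequence of generators of $\Mcg(V_n)$ preserving $\mathcal{A}$, at the cost of at most a linear blow-up. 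Controlling this blow-up, and in particular verifying that one can keep the lift inside $\Stab(\mathcal{A})$ throughout, is the technical heart of the argument.
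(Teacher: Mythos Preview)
Your lower bound is fine and matches the paper: the Handel--Mosher elements can be arranged to fix the splitting, not just the edge group conjugacy class, and the inclusion $\Stab(T)\subset\Stab([c])$ transfers the exponential lower bound.

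The upper bound, however, has a genuine gap, and you have correctly located it yourself. Lifting an arbitrary $g\in\Stab(T)$ to $\Mcg(V_n)$ with linear word length is easy (lift generators once and for all), but there is no reason the resulting $\tilde g$ should preserve the annulus $\mathcal{A}$; it is only guaranteed to act correctly on $\pi_1$. Correcting $\tilde g$ into $\Stab(\mathcal{A})$ requires multiplying by an element of the kernel of $\Mcg(V_n)\to\mathrm{Out}(F_n)$, and since that kernel (the twist group) is infinitely generated, you have no control over the length of the correction. The references you cite (\cite{HH2} and the lower-bound constructions here) only realise \emph{specific, hand-built} automorphisms by short handlebody maps; they do not give a Lipschitz section of $\Mcg(V_n)\to\mathrm{Out}(F_n)$, let alone one landing in $\Stab(\mathcal{A})$.

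The paper sidesteps this problem entirely. Rather than lifting to $\Mcg(V_n)$, it works in the mapping class group of the \emph{doubled} handlebody $W=\#_n S^1\times S^2$. The key point is that $\Mcg(W)\to\mathrm{Out}(F_n)$ has \emph{finite} kernel, so $\Mcg(W)$ is quasi-isometric to $\mathrm{Out}(F_n)$ and there is no distortion issue in passing between them. The cyclic splitting corresponds to an embedded torus $T\subset W$, and the paper reruns the surgery argument of Theorem~\ref{thm:annulus-distorted} with sphere systems in $W$ in place of disc systems in $V$: one shows that sphere systems meeting $T$ in a single circle are connected by surgery sequences of length bounded by intersection number (Corollary~\ref{cor:sphere-surgery-length-bound}), and that intersection number grows at most exponentially in $\|f\|_{\Mcg(W)}$. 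This yields the exponential upper bound directly in $\mathrm{Out}(F_n)$, with no lifting required.
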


\subsection{Algebraic description of primitive stabilisers}
Stabilisers of primitive curves, in contrast to the situation of
meridians, cannot easily be reduced to lower-genus handlebody groups
and point-pushing. In this subsection we discuss some of the
difficulties encountered when trying to extend the usual description
of stabilisers using boundary pushing and a reduction to smaller genus
as for the mapping class group of a surface. A reader interested only
in the geometry of stabilisers may safely skip to the next subsection.

\bigskip Throughout, $\alpha$ will be a primitive loop on $\partial
V$. In particular, $\alpha$ is non-separating.
Let
\[ \partial V - \alpha = Y, \] where $Y$ has two boundary components
$\alpha_+, \alpha_-$ corresponding to the two sides of $\alpha$. 
Recall that in the mapping class group of $\partial V$, we have a short exact sequence
\begin{equation}
1 \to \ZZ \to \Mcg(Y) \to \mathrm{Stab}_{\Mcg}(\alpha) \to 1\label{eq:nonsep-split}
\end{equation}
where the first map sends $1$ to $T_{\alpha_+}T^{-1}_{\alpha_-}$. 
Let $\hat{Y}$ be the surface gluing a disc to 
the boundary component $\alpha_-$ of $Y$. We also have a Birman exact sequence
\begin{equation}
1 \to \pi_1(U\hat{Y}) \to \Mcg(Y) \to \Mcg(\hat{Y}) \to 1,\label{eq:birman-nonsep}
\end{equation}
where $U\hat{Y}$ denotes the unit tangent bundle of $\hat{Y}$. We call the elements
of the kernel \emph{(boundary) pushes}.
This sequence splits, for example in the following way: we define a \emph{$\alpha$--splitting
    surface} to be a subsurface $S$ of $Y$ so that $Y-S$ is a
  $3$--holed sphere containing $\alpha_-,\alpha_+$ in its
  boundary. Then the inclusion $\Mcg(S) \to \Mcg(Y)$ yields the desired splitting.
Hence, we can identify $\Mcg(Y) \cong \pi_1(U\hat{Y}) \rtimes \Mcg(\hat{Y})$.

Since $\alpha$ is primitive, neither $Y$ nor $\hat{Y}$ can be
naturally identified with a sub-handlebody. However, we can choose a
$\alpha$--splitting surface $S$ which is the boundary of a sub-handlebody,
and use the splitting of the sequence~(\ref{eq:birman-nonsep}) to
identify with quotient with $\Mcg(S)$. We then obtain an exact sequence
\[ 1 \to \pi_1(U\hat{Y})\cap\mathcal{H}(V) \to \Mcg(Y) \cap
\mathcal{H}(V) \to \mathcal{H}(S) \to 1. \]
Recall that $\mathcal{H}(S)$ is defined as the intersection of $\mathcal{H}(V)$ with $\Mcg(S)$.

Therefore, describing the stabiliser of $\alpha$ relies on describing the subgroup $\Gamma =
\pi_1(U\hat{Y})\cap\mathcal{H}(V)$ of the boundary pushing subgroup. 

Recall that we have a short exact sequence
\[ 1\to\ZZ\to \pi_1(U\hat{Y}) \to \pi_1(S) \to 1, \]
and under the identification of $\pi_1(U\hat{Y})$ with a subgroup of $\mathrm{Mcg}(Y)$,
the kernel corresponds to the Dehn twist about $\alpha$.  In
particular, since the twist about $\alpha$ is not contained in the
handlebody group of $V$, the group $\Gamma$ intersects each fibre of $\pi_1(U\hat{Y})
\to \pi_1(S)$ in at most one point.

Intuitively, it is clear that the group $\Gamma$ is much smaller than
$\pi_1(S)$. Namely, consider a meridian $\delta$ which intersects
$\alpha$ in a single point. If $a\subset Y$ is an arc based at
$\alpha_-$ disjoint from $\delta$ except in its endpoints, then the
push about $a$ maps $\delta$ to the curve obtained by concatenating
$\delta\cap Y$ with $a$. Hence, in order for the push to be in $\mathcal{H}(V)$, 
the arc $a$ would have to define a meridian as well. In fact, as the
following lemma shows, $\Gamma$ can be generated by such elements.
\begin{lemma}\label{lem:generating-pushes}
	The intersection of $\pi_1(U\hat{Y})$ with the handlebody group is generated by
	the image of all loops in $S-\alpha$ which are embedded meridians. These
    elements correspond to \emph{annular twists} $T_{\alpha}T_{\beta}^{-1}$ where 
    $\alpha, \beta$ are the boundary of a properly embedded annulus in $V$, 
    composed with Dehn twists about meridians.
\end{lemma}
Before proving Lemma~\ref{lem:generating-pushes}, we want to mention
that although pushes about embedded meridians generate $\Gamma$, the
group does not simply consist of pushes along $V$--trivial arcs. In fact, we
have the following.
\begin{lemma}\label{lem:stablisers-non-normal}
	For $V$ of genus $g\geq 3$, the group $\Gamma$ is not normal in $\pi_1(U\hat{Y})$.
\end{lemma}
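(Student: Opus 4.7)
The plan is to exhibit explicit witnesses to non-normality: a meridian push $P_\mu \in \Gamma$ and an element $h \in \pi_1(U\hat{Y})$ whose conjugate $h P_\mu h^{-1}$ escapes $\Gamma$.

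First I would reduce the statement from $\pi_1(U\hat Y)$ to the free group $\pi_1(\hat Y)$. The central $\ZZ$ in the extension $1 \to \ZZ \to \pi_1(U\hat Y) \to \pi_1(\hat Y) \to 1$ is generated by the Dehn twist $T_\alpha$, which is not in $\mathcal{H}(V)$ since $\alpha$ is primitive and hence not a meridian. Thus $\Gamma$ maps isomorphically onto a subgroup $\bar\Gamma \subset \pi_1(\hat Y)$; since the kernel of the projection is a central subgroup and is therefore preserved under any conjugation, $\Gamma$ is normal in $\pi_1(U\hat Y)$ if and only if $\bar\Gamma$ is normal in $\pi_1(\hat Y)$. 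Note that $\pi_1(\hat Y)$ is free of rank $2(g-1) \geq 4$ under the hypothesis $g \geq 3$.

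The next step is to select the witnesses. By Lemma~\ref{lem:generating-pushes}, the subgroup $\bar\Gamma$ is generated by based loops each of which traces out an embedded simple closed curve in $Y$ that is a meridian of $V$. The critical observation is that this generating set is \emph{not} conjugation-invariant: conjugating a based embedded simple loop by a general element of $\pi_1(\hat Y)$ produces a based loop that runs along an arc, around the simple loop, and back, and which generically is not freely homotopic to any embedded meridian representative as a based loop. Concretely, I would choose a non-separating meridian $\mu \subset Y$ disjoint from $\alpha$ (which exists since $g \geq 3$ leaves room for this in the genus-$(g-1)$ surface $\hat Y$), and a simple loop $\beta \subset \hat Y$ representing a primitive non-meridian element of $\pi_1(V)$ with $i(\beta,\mu) = 1$. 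The candidate witness is then $\beta\mu\beta^{-1} \in \pi_1(\hat Y)$, together with any lift of $\beta$ to $\pi_1(U\hat Y)$ as conjugating element.

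The heart of the argument is to verify that $\beta\mu\beta^{-1} \notin \bar\Gamma$. I would argue by means of an invariant that distinguishes elements of $\bar\Gamma$ from arbitrary conjugates of meridians. A standard $\pi_1(V)$-based obstruction fails here: both $\mu$ and $\beta\mu\beta^{-1}$, viewed via $\pi_1(Y)\hookrightarrow \pi_1(\partial V)\to\pi_1(V)$, map to the identity. Instead, the argument must track the geometric structure of $\bar\Gamma$: elements of $\bar\Gamma$ correspond, by Lemma~\ref{lem:generating-pushes}, to compositions of annulus twists about annuli one of whose boundary components is $\alpha$, and meridian twists, and one can analyse the action of the putative handlebody homeomorphism on a disc system for $V$ to derive a contradiction. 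In particular, if $P_{\beta\mu\beta^{-1}}$ were in $\mathcal{H}(V)$, then (together with $P_\mu^{-1}\in\Gamma$) the commutator $[P_\beta, P_\mu]$ would lie in $\mathcal H(V)$; using the intersection pattern $i(\beta,\mu)=1$, one can identify this commutator with a topologically explicit element (essentially a push along $\beta$ composed with an annular move around $\alpha$) whose action on a fixed meridian $\delta$ transverse to $\alpha$ produces a simple closed curve representing $[\beta]\in\pi_1(V)$, which is primitive and not trivial — contradicting that the push preserves meridians.

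I expect the main obstacle to be making this last identification rigorous: showing that the commutator $[P_\beta, P_\mu]$, which a priori lies in the push subgroup $\pi_1(U\hat Y)$, acts on a chosen test meridian by producing a curve whose free homotopy class in $\partial V$ maps to the primitive class $[\beta]$ in $\pi_1(V)$. This requires a careful bookkeeping of based loops and of the way pushes insert their defining loops at intersection points with $\alpha$, and is the point at which the hypothesis $g\geq 3$ — needed to guarantee the existence of the primitive non-meridian $\beta$ linked once with a meridian $\mu$ disjoint from $\alpha$ — enters essentially.
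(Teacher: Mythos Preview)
Your overall strategy matches the paper's: take a meridian loop (your $\mu$, the paper's $\delta$) and a primitive non-meridian loop (your $\beta$, the paper's $\gamma$), form the commutator of their pushes, and show this element lies outside $\mathcal{H}(V)$ by applying it to a test meridian and reading off the image in $\pi_1(V)$. The reduction to $\bar\Gamma\subset\pi_1(\hat Y)$ is fine but not really needed: it already suffices to show that the specific conjugate $P_\beta P_\mu P_\beta^{-1}\in\pi_1(U\hat Y)$ is not in $\Gamma$, which is exactly what the paper does.

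The genuine gap is in your proposed test. The commutator $[P_\beta,P_\mu]$ is the push along the loop $[\beta,\mu]\in\pi_1(\hat Y)$, and since $\mu$ dies in $\pi_1(V)$, so does $[\beta,\mu]$. If your test meridian $\delta$ meets $\alpha$ in one point, the image of $\delta$ under this push is (up to isotopy and the central $T_\alpha$ ambiguity) the concatenation of $\delta$ with the $V$--trivial loop $[\beta,\mu]$, hence still trivial in $\pi_1(V)$. In particular the image does \emph{not} represent the primitive class $[\beta]$, and the $\pi_1(V)$--obstruction you are aiming for vanishes for this choice of test curve.

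The paper avoids this by choosing the test meridian \emph{disjoint from $\alpha$} (so that the $T_\alpha$ ambiguity is irrelevant) but intersecting the twist curves $\gamma_1,\delta_1$ that implement the pushes. Although the commutator push is along a $V$--trivial loop, its factorisation as a product of twists about the non-meridian $\gamma_1$ carries this test meridian to a curve representing a specific length-eight word $y^{-1}xyx^{-1}yxy^{-1}x^{-1}$ in $\pi_1(V)$, not any primitive class. This is obtained by an explicit Dehn-twist computation, and there is no shortcut: the heuristic ``the push inserts the loop, so the answer is $[\beta]$'' is exactly what fails here.
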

We prove Lemma~\ref{lem:stablisers-non-normal} in the appendix, since the proof
only consists of a careful, somewhat lengthy check of intersection patterns.
However, we want to emphasise the following consequence of
Lemma~\ref{lem:stablisers-non-normal} in combination with
Lemma~\ref{lem:generating-pushes}, which may be of independent
interest, and highlights another difference between the complements of meridians
and primitives in a handlebody.
\begin{corollary}
  The kernel $\ker(\pi_1(S-\alpha) \to \pi_1(V))$ of the map induced
  by inclusion is not generated by embedded curves.
\end{corollary}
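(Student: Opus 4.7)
The plan is to derive the corollary by contradiction, extracting it from Lemma~\ref{lem:generating-pushes} and Lemma~\ref{lem:stablisers-non-normal} combined. Suppose $K := \ker(\pi_1(S-\alpha) \to \pi_1(V))$ is generated, as a subgroup, by embedded curves. Any embedded loop in $K$ is null-homotopic in $V$, hence an embedded meridian contained in $S-\alpha$; so $K$ is generated by embedded meridians. The target is to show that this hypothesis forces $\Gamma := \pi_1(U\hat Y) \cap \mathcal{H}(V)$ to be normal in $\pi_1(U\hat Y)$, contradicting Lemma~\ref{lem:stablisers-non-normal}.

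First I would exploit the structure of $\pi_1(U\hat Y)$. Since $\hat Y$ is a surface with non-empty boundary, its unit tangent bundle is trivial and one obtains a product decomposition $\pi_1(U\hat Y) \iso \pi_1(\hat Y) \times \ZZ$, with the $\ZZ$-factor central and corresponding under pushing to a Dehn twist near $\alpha$. Because $\alpha$ is primitive but not a meridian, this twist is not in $\mathcal{H}(V)$; thus $\Gamma$ meets the central $\ZZ$ only in the identity and injects via the first projection into $\pi_1(\hat Y)$. Denote the image by $\bar\Gamma$.

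Next I would identify $\bar\Gamma$ with the image $\bar K$ of $K$ under the capping surjection $\pi_1(Y) \twoheadrightarrow \pi_1(\hat Y)$. By Lemma~\ref{lem:generating-pushes}, $\Gamma$ is generated by pushes of embedded meridians, taken with their natural outward-normal framing (which lies in the zero slice of the trivialization). Therefore $\bar\Gamma$ is generated by the images in $\pi_1(\hat Y)$ of these embedded meridians; by the standing hypothesis the same set generates $\bar K$, so $\bar\Gamma = \bar K$, and moreover $\Gamma$ sits inside the zero-framing slice $\pi_1(\hat Y) \times \{0\}$ because all its generators do.

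Finally, $K$ is normal in $\pi_1(S-\alpha)$ as the kernel of a homomorphism, so its image $\bar K = \bar\Gamma$ under a surjection is normal in $\pi_1(\hat Y)$. Since the $\ZZ$-factor in $\pi_1(U\hat Y) = \pi_1(\hat Y) \times \ZZ$ is central, the subgroup $\Gamma = \bar\Gamma \times \{0\}$ is then normal in $\pi_1(U\hat Y)$, contradicting Lemma~\ref{lem:stablisers-non-normal}. The main delicate point I expect is the framing claim: that the generators of $\Gamma$ supplied by Lemma~\ref{lem:generating-pushes} sit in the zero-framing slice of the trivialization $U\hat Y \iso \hat Y \times S^1$. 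This should be verified by comparing the outward-normal framing with the explicit ``annulus twist composed with meridian twist'' description in Lemma~\ref{lem:generating-pushes}, checking that the central $\ZZ$-coordinate vanishes on each generator; once that is granted, the homomorphism $\bar\Gamma \to \ZZ$ measuring the framing is forced to be identically zero, and the argument closes.
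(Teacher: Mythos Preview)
Your overall strategy---assume $K$ is generated by embedded meridians, identify $\bar K$ with $\bar\Gamma$ via Lemma~\ref{lem:generating-pushes}, use normality of $\bar K$, and contradict Lemma~\ref{lem:stablisers-non-normal}---is exactly the paper's intended derivation. Steps~1 through~4 are correct: under the hypothesis, $\bar\Gamma = \bar K$ is normal in $\pi_1(\hat Y)$.

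The gap is in step~5, your framing claim. You need the generators of $\Gamma$ to sit in the zero slice $\pi_1(\hat Y)\times\{0\}$ for some trivialisation. But compute directly from the description in Lemma~\ref{lem:generating-pushes}: for an embedded meridian $\ell$ through the basepoint, the neighbourhood of $\ell\cup\alpha_-$ in $Y$ is a pair of pants with boundaries $\alpha_-, c_1, c_2$, where (say) $c_1$ is a meridian and $c_2$ cobounds an annulus in $V$ with $\alpha$. The blackboard-framed push $T_{c_1}T_{c_2}^{-1}$ is \emph{not} in $\mathcal{H}(V)$, since $T_{c_2}$ is not; one needs an extra factor $T_{\alpha_-}^{\pm 1}$ to land in $\mathcal{H}(V)$. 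Thus $\phi([\ell]) = \pm 1$, not $0$. Changing trivialisation shifts $\phi$ by the restriction of some $\psi\in\mathrm{Hom}(\pi_1(\hat Y),\Z)$, which vanishes on any null-homologous $[\ell]$; so as soon as there is an embedded meridian $\ell$ with $[\ell]=0$ in $H_1(\hat Y)$ (and such exist for $g\ge 3$), no trivialisation can make $\phi$ vanish. The framing claim fails.

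The clean fix is to bypass this step entirely. Normality of $\bar\Gamma$ in $\pi_1(\hat Y)$ does \emph{not} formally imply normality of $\Gamma$ in $\pi_1(U\hat Y)$ (you would also need $\phi$ to be conjugation-invariant), so the bare statement of Lemma~\ref{lem:stablisers-non-normal} is not quite enough. However, the \emph{proof} of that lemma in the appendix actually establishes the stronger fact that $\bar\Gamma$ is not normal in $\pi_1(\hat Y)$: it shows that no lift of $[\delta,\gamma]$ lies in $\Gamma$, i.e.\ $[\delta,\gamma]\notin\bar\Gamma$, while $\delta\in\bar\Gamma$. Your steps~1--4 then give the contradiction directly, and the framing discussion is unnecessary.
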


\begin{proof}[Proof of Lemma~\ref{lem:generating-pushes}]
  Recall that $\Gamma = \pi_1(U\hat{Y}) \cap \mathcal{H}$, and denote by $\Gamma_0$ 
  the subgroup generated by the pushes as in the statement of the lemma. Pick a point $p \in \alpha$,
  and note that it defines points $p_-, p_+ \in Y$. Define
  $\mathcal{A}$ to be the graph whose vertices correspond to homotopy classes of arcs $a\subset Y$ joining $p_-$
  to $p_+$, so that $a$ defines a meridian on $\partial V$. We join two vertices
  with an edge, if the corresponding arcs are disjoint except at their endpoints. 
  Note that $\mathcal{H}(V)\cap\Mcg(Y)$ acts on $\mathcal{A}$ as isometries.

  Observe that there is an arc $a$ as above, so that $Y\setminus a$ is homotopy
  equivalent to the splitting surface $S$. Hence, the stabiliser of this arc $a$ in 
  $\mathcal{H}(V)\cap\Mcg(Y)$ is equal to (the image of) $\mathcal{H}(S)$.

  To prove the lemma, it therefore suffices to show that $\Gamma_0$ acts transitively on the
  vertex set of $\mathcal{A}$. To this end, first consider two arcs $a, a'$ corresponding to adjacent
  vertices of $\mathcal{A}$.
  Then the concatenation $l=a^{-1}*a'$ is a loop joining $p_-$ to itself, and furthermore it defines
  an embedded meridian in $Y$. Hence, the push $P(l)$ about $l$ is an element of the handlebody group
  (it is an annular twist). Furthermore, we have
  \[ P(l)(a') = a \] 
  Hence, to prove the claim, it suffices to show
  that $\mathcal{A}$ is connected.  This follows from a standard
  surgery argument: suppose $a, a'$ are any two arcs representing
  vertices that are not disjoint. Since they both define meridians on
  $\partial V$, there is a wave $w\subset a'$.  A suitable surgery
  $a_w$ then intersects $Y$ in an arc still connecting $p_-$ to $p_+$,
  which is otherwise disjoint from $a$, and intersects $a'$ in strictly fewer points. 
\end{proof}

\subsection{Upper distortion bounds}
The upper distortion bounds in Theorem~\ref{thm:primstab-distorted},~\ref{thm:annulus-distorted} and Proposition~\ref{prop:cyclic-splitting-distorted} follow from a surgery construction. We begin by describing the
case of an annulus $A$ in a handlebody in detail; the case of a primitive element is very similar.
Then we discuss the case of a primitive cyclic splitting in the free group.

\medskip
We are again using the two complexes $\mathcal{G}$ and $\mathcal{R}$
which appeared in Section~\ref{sec:good-stabs}. In fact, we consider
the following sub-complex:
\begin{definition}
	Let $\mathcal{G}(A)$ to be the full sub-complex of $\mathcal{G}$ of
	all those vertices whose cut system $C$ intersects each curve in
	$\partial A$ in exactly one point, and also so that $l$ intersects
	each curve in $A$ at most in one point.
\end{definition}
\begin{definition}
	Let $\mathcal{R}(A)$ be the full sub-complex of $\mathcal{G}$ of
	all those vertices whose cut system $C$ intersects each curve in
	$\partial A$ in exactly one point, and also so that $\partial A$
	embeds in the graph as an embedded subgraph.
\end{definition}
\begin{lemma}
	Suppose that $C, C'$ are cut systems both of which intersect each component of $A$
	in exactly one point. If $C$ and $C'$ are not disjoint, then there is a surgery $C_1$ 
	of $C$ in direction of $C'$ (i.e. defined by a wave of a component $c' \in C'$ with respect to $C$)
	which also intersects each component of $A$ in exactly one point. In addition, we may assume that
	this wave is disjoint from $A$.
\end{lemma}
\begin{proof}
	Let $\alpha$ be one of the boundary components of $A$. Since $C'$ intersects $C$,
	there is are two distinct waves $w_1, w_2$ of some component of $C'$ with respect
	to $C$. 
	
	Since $\alpha$ intersects $C'$ in a single point, we may assume without loss of 
	generality that $w_1$ does not intersect $\alpha$. 
	Let $C_1$ be the cut system surgery defined
	by that wave $w_1$. As the wave $w_1$ is disjoint from $\alpha$, the
	result $C_1$ intersects $\alpha$ in at most one point. As $\alpha$
	is nontrivial in $\pi_1(V)$, it cannot be disjoint from a cut system
	-- hence, $\alpha$ intersects $C_1$ in a single point.
	
	Consider now the second boundary component $\beta$ of $A$. Since
	$\beta$ intersects both $C$ and $C'$ in one point, it intersects
	$C_1$ in at most two points. Let $D_1$ be a collection of disjoint,
	properly embedded discs, bounded by $C_1$. Then $\alpha$ intersects
	$D_1$ in a single point. As $\alpha, \beta$ are freely homotopic,
	$\beta$ also intersects $D_1$ in an odd number of points (as the
	parity of intersection can be detected by the algebraic intersection
	pairing, and therefore it is an invariant of free homotopy classes).
	Hence, it is impossible that $\beta$ intersects $C_1$ in zero or two 
	points, and $\beta$ intersects $C_1$ in a single point as claimed.
\end{proof}
As a consequence, we get the following two results
\begin{corollary}\label{cor:surgery-length-bound}
	Given two cut systems $C, C'$, both of which intersect each component of $A$
	in a single point, there is a sequence
	\[ C = C_0, C_1, \ldots, C_n, C_{n+1} = C'\] of cut systems so that all $C_i$
	intersect each component of $A$ in a single point, $n \leq i(C, C')$, and consecutive $C_i, C_{i+1}$ are 
	obtained by surgery at a wave $w_i$ disjoint from $A$.
\end{corollary}
\begin{corollary}\label{cor:finite-generation}
	After possibly increasing the constants chosen in their definitions, the graphs $\mathcal{G}(A), \mathcal{R}(A)$ are connected. The stabiliser of $A$ acts on them
	properly discontinuously and cocompactly. Hence, this stabiliser is in particular finitely generated.
\end{corollary}
\begin{proof}
	By choosing the constants in the fefinition of $\mathcal{R}$ large enough, any two vertices whose underlying filling meridian system
	is the same can be joined by a path with the same meridian system (compare the proof of Lemma~\ref{lem:modelbuilding}). Hence, the same is 
	true in $\mathcal{R}(A)$ (as one can simply keep the edges corresponding to $A$).
	Now, Corollary~\ref{cor:surgery-length-bound} implies connectivity by arguing as in
	the proof of Lemma~\ref{lem:modelbuilding}.
	The fact that the stabiliser of $A$ acts cocompactly is also shown exactly as in
	Lemma~\ref{lem:modelbuilding}.
\end{proof}
We also note the following (compare e.g. \cite[Corollary~A.4]{HH1} for a similar result)
\begin{proposition}\label{prop:intersection-growth}
	There are numbers $a,b$ so that the following holds. Let $(C,l)\in\mathcal{G}$ be 
	any vertex, and $f\in\mathcal{H}(V)$ be a handlebody group element.
	Then
	\[ i(C\cup l, f(C\cup l)) \leq a\cdot b^{\|f\|_{\mathcal{H}(V)}}. \]
\end{proposition}
\begin{proof}
	Since $\mathcal{G}$ and $\mathcal{H}(V)$ are quasi-isometric, it suffices to show that
	$i(C\cup l, C'\cup l')$ can be bounded by an exponential function of $d((C,l),(C',l'))$.
	To show this, it in turn suffices to show that there is a constant $K$ so that
	\[ i(C\cup l, C'\cup l') \leq Ki(\hat{C}\cup \hat{l}, C'\cup l') \]
	whenever $(\hat{C}, \hat{l}), (C,l)$ are adjacent in $\mathcal{G}$. Denote by $D_1, \ldots, D_k$ the
	complementary components of $C\cup l$. Observe that $k$ can be bounded in terms of $i(C,l)$, which 
	in turn is bounded by a constant chosen in the definition of $\mathcal{G}$. Denote by $c'_1, \ldots, c'_N, l'_1, \ldots, l'_M$ the intersection arcs of $C', l'$ with the $D_j$. The total number $N+M$ of these arcs is bounded
	by $i(C\cup l, C'\cup l')$. 
	
	Now, since $(C,l), (\hat{C}, \hat{l})$ are adjacent we can similarly write $\hat{C}$ and $\hat{l}$ as concatenations of arcs $\hat{c}_1, \ldots, \hat{c}_r, \hat{l}_1, \ldots, \hat{l}_s$ in the $D_j$.
	Here, the total number $r+s$ is again uniformly bounded, dependent only on the constants defining $\mathcal{G}$.
	Since all $D_i$ are discs, up to homotopy fixing endpoints, any $\hat{c}_i$ or $\hat{l}_i$ and any $c'_j$ or $l'_j$ intersect in at most one point.
	Hence, we have
	\[ i(\hat{C}\cup\hat{l}, C'\cup l') \leq (r+s)(N+M) \leq Ki(C\cup l, C'\cup l'),\]
	for a suitable choice of $K$. This shows the proposition.
\end{proof}

The key to the upper distortion bound lies in the following two lemmas,
which we will use to inductively build a path.

For their formulation, suppose that $\Gamma$ is a graph representing a vertex of $\mathcal{R}$.
Recall that this means that, in particular, there is an embedded cut system $C \subset \Gamma$. 
We denote this system by $C(\Gamma)$, and we call any edge of $\Gamma$ which is not contained
in $C$ a \emph{rope edge}.

\begin{lemma}\label{lem:repair-ropes}
	Let $C$ be a cut system, so that $C$ intersects each component of $A$ in a single point.
	Suppose
	that $\Gamma_i$ is a vertex of $\mathcal{R}(A)$, so that each rope edge
	$e$ of $\Gamma_i$ intersects $C$ in at most $K$ points.
	Then there is a vertex $\Gamma_i'$
	of $\mathcal{R}(A)$ with the following properties:
	\begin{enumerate}[i)]
		\item $C(\Gamma_i) = C(\Gamma'_i)$.

		\item Each rope edge of $\Gamma'_i$ intersects $C$ in at most $K$ points.
		\item Each arc in $C\cap (\partial V - C(\Gamma'_i))$ which is disjoint from $A$, is disjoint
		from the rope edges of $\Gamma_i'$ up to homotopy.
		

		
		\item The distance between $\Gamma_i$ and $\Gamma'_i$ in $\mathcal{R}(A)$
		is at most $L_1K$. Here, $L_1$ is a constant depending only on the topological
		type of $S$, and the constants defining the graph $\mathcal{R}$.
	\end{enumerate}
\end{lemma}
\begin{proof}
	To obtain $\Gamma_i'$ from $\Gamma_i$, we will successively replace rope edges using surgery
	using segments in $C$.
	None of these moves change the underlying meridian system, guaranteeing property~i), and cannot
	introduce new intersections with $C$, guaranteeing property~ii).
	
	For any pair of a $C$--arc disjoint from $A$, and rope edge, at
	most $K$ surgeries are needed to make them disjoint (by assumption on the intersection number
	of rope edges and $C$--arcs), and each surgery step stays in
	$\mathcal{R}(A)$. Since the
	number of different $C$--arcs is uniformly bounded by the genus of
	$\partial V$, and the same is true for the number of rope edges of $\Gamma_i$,
	this shows that after at most $L_1K$ steps we arrive at the desired $\Gamma'_i$
	(where $L_1$ just depends on the number of possible topological types of rope edges and $C$--arcs).

\end{proof}
\begin{lemma}\label{lem:surgery-step}
	Suppose that $\Gamma'_i$ is a vertex of $\mathcal{R}(A)$, and
	 that $C_{i+1}$ is a cut system obtained from $C_i = C(\Gamma'_i)$ from
	a surgery move in the direction of $C$, defined by a wave $w$ disjoint from $A$.
	
	Further suppose that each rope edge of $\Gamma'_i$ is disjoint from $w$, and that
	each rope edge of $\Gamma'_i$ intersects $C$ in at most $K$ points.
	
	\smallskip Then there is a vertex $\Gamma_{i+1}$ of $\mathcal{R}(A)$ with the following properties:
	\begin{enumerate}[i)]
		\item $C(\Gamma_{i+1}) = C_{i+1}$.
		\item Every rope edge of $\Gamma_{i+1}$ intersects $C$ in at most $\max\{K, i(C, C_i)\}$ points.
		\item The distance between $\Gamma'_i, \Gamma_{i+1}$ in $\mathcal{R}(A)$ is at most $1$.
	\end{enumerate}
\end{lemma}
\begin{proof}
	First, by possibly adding $w$ as an additional rope edge, we may
	replace $\Gamma'_i$
	by an adjacent vertex $\Gamma''_i$ so that $w$ is a rope edge. 

	Then, $\Gamma''_i$ contains $C_{i+1}$ as a subgraph, and we
	define $\Gamma_{i+1}$ to be this vertex, guaranteeing i). In particular, observe that $w$
	has ceased to be a rope edge of $\Gamma_{i+1}$ (as it is now part of the meridian system),
	and $\Gamma_{i+1}$ instead has a rope edge $c$ which is a subarc of $C_i$. In particular,
	that rope edge intersects $C$ in at most $i(C, C_i)$ points. 	
	Any other rope edge of $\Gamma_{i+1}$ is also a rope edge of $\Gamma'_i$, so by assumption it  
	has at most $K$ intersections with $C$. Hence, property~ii) holds.
	
	Property~iii) is clear from the construction.
\end{proof}

\begin{proof}[Proof of the upper bound in Theorem~\ref{thm:annulus-distorted}]
	Fix a basepoint $(C,l)\in\mathcal{G}(A)$, so that the two $A$--arcs can be made disjoint from
	the $l$--arcs by homotopy. This exists, assuming that the constants defining $\mathcal{G}(A)$ are
	chosen large enough: e.g. by starting with the $A$--arcs,
	and adding enough additional arcs so that any boundary of $S-C$ is joined to any other. The resulting $l$
	intersects any curve in $C$, and every wave relative to $C$, and therefore is discbusting. The number of arcs
	necessary depends only on the number of boundary components of $S-C$ (hence, the genus of $S$), and thus if
	the constant defining $\mathcal{G}$ is chosen large enough, $(S,l)$ defines a vertex.
	
	\smallskip Now consider an element $f\in\mathrm{Stab}(A)$. We then
	know, from Proposition~\ref{prop:intersection-growth} that
	\[ i(C\cup l, f(C\cup l)) \leq a\cdot b^{\|f\|_{\mathcal{H}(V)}} = M. \]
	Let $\Gamma \subset C \cup l$ be the vertex of $\mathcal{R}$
	corresponding to $(C, l)$, and let $\Gamma_0 = f(\Gamma)$ be the image under $f$. 
	
	We begin by applying Corollary~\ref{cor:surgery-length-bound}
	to obtain a surgery sequence $C_1,\ldots, C_L$ from $f(C)$ to $C$, of length $L \leq M$. We have $i(C_j, C) \leq M$ for all $j$, and each surgery step in this sequence is done
	using a wave disjoint from $A$.
	
	Next, we apply Lemma~\ref{lem:repair-ropes} to $\Gamma_0$ and $M$ as the intersection bound.
	We obtain a vertex $\Gamma'_1$, with $d(\Gamma_0, \Gamma'_1) \leq L_1M$, and so that $\Gamma'_1$
	satsifies the prerequisites of Lemma~\ref{lem:surgery-step}. Applying the latter lemma yields
	a vertex $\Gamma_2$ of distance $\leq 1$, with $C(\Gamma_2) = C_2$, and where each rope still 
	has intersection at most $\max\{M, i(C_1, C)\}\leq M$ with $C$.
	
	Thus, we can inductively build a path of length $n(L_1M + 1)\leq M(L_1M+1)$ joining $f(\Gamma)$
	to a vertex $\Gamma_n$ with $C(\Gamma_n) = C$.
	Recall that the stabiliser of $C$ in the handlebody group of $V$ is equal to the stabiliser of $C$
	in the mapping class group of $\partial V$. Hence, the stabiliser of $C\cup A$ in the handlebody group
	is also equal to the stabiliser of $C\cup A$ in the mapping class group. In the mapping class group,
	stabilisers of curve systems, or arc systems, are undistorted \cite{MM2}. Hence,
	the stabiliser of $C\cup A$ is also undistorted in the handlebody group, and therefore there is a path
	of length coarsely bounded by $M(L_1M+L_2)$ joining $\Gamma_n$ to $\Gamma$ in $\mathcal{R}(A)$.
	
	As these length bounds are polynomial in $M$, the triangle inequality yields that the distance
	between $\Gamma$ and $f(\Gamma)$ in $\mathcal{R}(A)$ can be bounded by an exponential
	in $\|f\|_{\mathcal{H}(V)}$, showing that $\mathcal{R}(A)$ is at most exponentially distorted.
	This shows the upper bound in Theorem~\ref{thm:annulus-distorted}.
\end{proof}

To prove Proposition~\ref{prop:cyclic-splitting-distorted}, we
work in a doubled handlebody.  First, using surgeries of sphere
systems instead of meridian systems we show the following analogue of
Corollary~\ref{cor:surgery-length-bound} with essentially the same argument.
\begin{lemma}\label{lem:sphere-surgery-length-bound}
	Let $W = \#_gS^1\times S^2$ be a doubled handlebody, and suppose
	that $T\subset W$ is an embedded torus so that the image of
	$\pi_1(T)\to\pi_1(W)$ is a cyclic group generated by an primitive 
	element of the free group $\pi_1(W)$.
	
	Suppose that $\sigma, \sigma'$ are two sphere systems in minimal
	position, each of which intersects $T$ in a single circle. Then
	there is a sequence
	\[ \sigma = \sigma_0, \sigma_1, \ldots, \sigma_n, \sigma_{n+1} = \sigma'\] 
	so that each $\sigma_i$ intersects $T$ in a single circle, and $n$ 
	is at most the number of intersection circles in $\sigma\cap\sigma'$.
\end{lemma}
\begin{proof}
	Consider $\sigma\cap \sigma'$, which is a collection of disjoint circles. Consider
	any innermost circle $C$ of this collection, i.e. a circle which bounds a disc
	$D\subset \sigma'$ whose interior is disjoint from $\sigma$. As there are at least
	two such innermost circles, we can choose one where $D$ is disjoint from the torus $T$.
	
	We define $\sigma_1$ as the surgery of $\sigma$ using this disc $D$. As $T$ cannot become
	disjoint from a filling sphere system (otherwise the image of $\pi_1(T)$ in $\pi_1(W)$ would
	be trivial), $T$ will intersect $\sigma_1$ in a single circle as well.
	
	Further, $\sigma_1$ has at least one fewer intersection circle with $\sigma'$, and so the
	lemma follows by induction.
\end{proof}
We also have the following analog of Proposition~\ref{prop:intersection-growth}:
\begin{lemma}
	Let $W = \#_gS^1\times S^2$ be a doubled handlebody. Then there are
	numbers $a,b$ so that the following holds.  Let $\sigma$ be any
	filling sphere system, and $f\in\mathrm{Mcg}(W)$ be arbitrary.
	Then, in minimal position, the number of intersection circles in
	$\sigma\cap f(\sigma)$ is at most 
	\[ a\cdot b^{\|f\|_{\mathrm{Mcg}(W)}}. \]
\end{lemma}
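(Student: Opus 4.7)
The plan is to mimic the proof of Proposition~\ref{prop:intersection-growth}, replacing discs by spheres and the handlebody group by $\mathrm{Mcg}(W)$. First, fix a finite generating set $T = \{t_1, \ldots, t_k\}$ of $\mathrm{Mcg}(W)$; this exists because by Laudenbach's theorem there is a short exact sequence $1 \to K \to \mathrm{Mcg}(W) \to \mathrm{Out}(F_g) \to 1$ with $K$ finite and $\mathrm{Out}(F_g)$ finitely generated.

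The core step is a single-generator estimate: there exists $C>0$ such that for every $t_j \in T$ and every pair of sphere systems $\sigma', \sigma''$ in minimal position,
$$n(\sigma' \cap t_j(\sigma'')) \;\leq\; C\cdot n(\sigma'\cap\sigma'') \;+\; C,$$
where $n(\cdot)$ denotes the number of intersection circles after isotoping into minimal position. I would prove this by fixing a filling reference sphere system $\tau$ for which each $t_j$ is realised as a composition of a uniformly bounded number of elementary moves (sphere twists along spheres of $\tau$, handle slides localised near $\tau$), and then invoking Hatcher's normal form for sphere systems in $\#_g S^1\times S^2$: normal position localises the intersection picture, and each elementary move changes the number of intersection circles with any other system by at most a bounded multiplicative and additive amount.

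Given the single-generator estimate, the lemma follows by induction on word length. Write $f = t_{i_1}\cdots t_{i_n}$ with $n=\|f\|_{\mathrm{Mcg}(W)}$ and apply the estimate iteratively with $\sigma'=\sigma$ and $\sigma'' = t_{i_{j+1}}\cdots t_{i_n}(\sigma)$ to obtain
$$n(\sigma\cap f(\sigma)) \;\leq\; C^n \cdot n(\sigma\cap\sigma) \;+\; \sum_{j=1}^{n} C^j \;\leq\; a\cdot b^n$$
for suitable $a$ and $b=C+1$, using that $n(\sigma\cap\sigma)=0$ in minimal position.

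The main obstacle is the per-generator bound. For generators lying in the image of the doubling map $\mathcal{H}(V)\to\mathrm{Mcg}(W)$, one can bootstrap directly from \cite[Corollary~A.4]{HH1} via the correspondence between disc systems in $V$ and their doubles in $W$ (minimal position of discs doubles to minimal position of spheres, and intersection numbers behave multiplicatively by a factor of two). The remaining generators, namely the swap symmetry exchanging the two halves of $W$ together with finitely many sphere twists that survive in $K$, are supported in a uniformly bounded region once $\tau$ is chosen appropriately, so an elementary surgery argument handles them. The difficulty is therefore not conceptual but confined to checking that Hatcher normal form delivers the stated bound uniformly across the finite generating set; once this is in hand, the induction is formal.
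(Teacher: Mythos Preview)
Your induction-on-word-length skeleton is exactly right, and it is also what the paper does. The difference lies entirely in how the linear per-step bound is obtained, and here the paper takes a much shorter route than you propose.

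Rather than analysing specific generators, the paper proves the following purely topological estimate: there is a constant $C$ so that whenever $\sigma''$ is a sphere system \emph{disjoint} from a sphere system $\sigma'$, one has
\[ i(\sigma,\sigma'') \;\leq\; C\,i(\sigma,\sigma') + C. \]
The justification is one sentence: each component of $\sigma\cap(W\setminus\sigma')$ is a planar piece of a sphere, and a sphere of $\sigma''$ (which lives entirely in $W\setminus\sigma'$) meets such a planar piece in at most one circle in minimal position. Since the number of such pieces is $i(\sigma,\sigma')$ plus the bounded number of spheres in $\sigma$, the estimate follows. One then iterates along a disjointness path from $\sigma$ to $f(\sigma)$ of length linear in $\|f\|$, and the exponential bound drops out exactly as in your final paragraph.

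Your route via doubling and elementary moves is more laborious and also has a scope issue. The bootstrap from \cite[Corollary~A.4]{HH1} only controls intersections between sphere systems that \emph{are} doubles of disc systems; an arbitrary sphere system in $W$ need not arise this way, so your per-generator bound for general $\sigma',\sigma''$ does not follow. You can salvage the specific induction you run by choosing $\sigma$ to be a double and observing that $K$ acts trivially on isotopy classes of spheres, so every intermediate $\sigma''$ is again a double; but this only yields the lemma for doubled $\sigma$, not for an arbitrary filling system as stated. The paper's disjointness estimate sidesteps all of this: it is uniform over all sphere systems and makes no reference to the handlebody, to a choice of generators, or to Hatcher normal form beyond ordinary minimal position.
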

\begin{proof}
	To show the lemma, it suffices to show that there is a number $C$ so
	that if $\sigma, \sigma'$ are any two sphere systems, and $\sigma''$
	is disjoint from $\sigma'$, then
	\[ i(\sigma, \sigma'') \leq Ci(\sigma, \sigma') + C \] This follows
	since $\sigma''$ intersects each component of
	$\sigma\cap(W-\sigma')$ in at most one circle.
\end{proof}
Together with Lemma~\ref{lem:sphere-surgery-length-bound}, this lemma proves
the upper bound in Proposition~\ref{prop:cyclic-splitting-distorted} by induction.

\bigskip
Finally, we prove the undistortion statement in genus $2$ for primitive stabilisers.
\begin{proof}[Proof of Proposition~\ref{prop:primstab-genus-2}]
	As in Section~\ref{sec:good-stabs}, we aim to project paths in $\mathcal{R}$ to $\mathcal{R}(\alpha)$.
	First, we observe the following preliminary step. Suppose that $\Delta=\{\delta_1, \delta_2\}$ is
	any cut system. Then, since $\alpha$ is primitive, at least one of $\iota(\alpha, \delta_1),\iota(\alpha,\delta_2)$ is odd. In particular, there is a subarc $d \subset \delta_1\cup\delta_2$ connecting the two different sides of $\alpha$. One component of the boundary of 
	a regular neighbourhood of $d\cup\alpha$ is a separating meridian $\delta(d)$. As $V_2$ has genus $2$, there is a unique cut system
	$\Delta(d)$ disjoint from $\delta(d)$. Since $\alpha$ is disjoint from $\delta(d)$, it intersects this cut system in a single point.
	Observe
	that if $d'$ is any other possible choice of arc, the meridians $\delta(d), \delta(d')$
	intersect in at most four points, and thus $\Delta(d), \Delta(d')$ also intersect in uniformly few points. 
	
	The same argument shows that if $\Delta'$ is a disjoint cut system, and $d'$ is an admissible arc, then $\Delta(d), \Delta'(d')$ intersect in uniformly few points. Hence,
	we can define a Lipschitz projection of $\mathcal{H}_2$ to the stabiliser of $\alpha$.
\end{proof}

\subsection{Lower distortion bounds}
The proofs of the lower distortion bounds for all three results mentioned at the beginning of this section are
very similar, and rely on two main ingredients.  On the one hand, we
use the following theorem, which is shown by Handel-Mosher (\cite[Section~4.3, Case~1]{HM}):
\begin{theorem}\label{thm:hm-distortion}
  Let $n\geq 3$ be given, and $F_n$ is a free group with free basis $e_1,\ldots,e_n$. Suppose that $\Theta:\langle e_1,e_2\rangle\to \langle e_1,e_2\rangle$ is
  an irreducible automorphism of exponential growth. Define an automorphism $f_k\in\mathrm{Out}(F_n)$ by the rule
  \[ e_i \mapsto e_i, \quad i<n \]
  \[ e_n \mapsto e_n\Theta^k(e_1). \] Then the norm of $f_k$ in the stabiliser of the conjugacy class $[e_1]$ grows exponentially in
  $k$.
\end{theorem}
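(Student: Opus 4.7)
Plan. The theorem states a lower bound on the word length of $f_k$ in $\Stab([e_1])$; to see this as exponential distortion, I would first verify the upper bound $\|f_k\|_{\mathrm{Out}(F_n)} = O(k)$. Letting $\hat{\Theta}$ denote the extension of $\Theta$ to $F_n$ by the identity on $\langle e_3,\dots,e_n\rangle$, and $\tau$ the Nielsen transvection $e_n\mapsto e_n e_1$ (identity on the remaining generators), a direct check gives the identity $f_k = \hat{\Theta}^k\tau\hat{\Theta}^{-k}$ in $\mathrm{Out}(F_n)$, so $\|f_k\|_{\mathrm{Out}(F_n)} \leq 2k + O(1)$. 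This is an ``axis shortcut'' along $\hat{\Theta}$; the content of the theorem is that this shortcut is unavailable inside $\Stab([e_1])$, since $\hat{\Theta}$ does not preserve $[e_1]$.

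For the lower bound, the plan is to exploit train track theory. Because $\Theta$ is fully irreducible with exponential growth, it admits a train track representative on the rose with petals $e_1, e_2$ and Perron--Frobenius stretch factor $\lambda > 1$; in particular the cyclically reduced length of $\Theta^k(e_1)$ is asymptotic to $\lambda^k$. I would then look for a geometric model on which $\Stab([e_1])$ acts with bounded generator displacement, but where $f_k$ translates a basepoint by at least $C\lambda^k$.

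A natural candidate is the constrained outer space $CV_n^{[e_1]}$ of marked metric graphs for $F_n$ with the length of the circuit representing $[e_1]$ normalised to $1$, equipped with the asymmetric Lipschitz metric $d_L$. Elements of $\Stab([e_1])$ preserve $CV_n^{[e_1]}$ and act by $d_L$--isometries, so for any basepoint $T_0$ one has $d_L(T_0, f_k T_0) \leq C'\|f_k\|_{\Stab([e_1])}$. The theorem thus reduces to establishing
\[
d_L(T_0, f_k T_0) \geq C\lambda^k.
\]
Heuristically, any equivariant Lipschitz comparison $T_0\to f_k T_0$ which preserves the length of $[e_1]$ must map the petal labelled $e_n$ to a path of reduced length comparable to $|\Theta^k(e_1)|\asymp \lambda^k$, forcing the Lipschitz constant to be of that order.

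The main obstacle is to make this displacement lower bound precise. The crux is that the axis of $\hat{\Theta}$ -- which provides the short factorisation in $\mathrm{Out}(F_n)$ -- exits the constrained subspace because $\hat{\Theta}$ stretches $[e_1]$, and one must rule out any alternative shortcut inside the stabiliser. Formally this requires a train-track analysis of admissible folding sequences that keep the circuit $[e_1]$ of fixed length, showing that no such sequence can attach the word $\Theta^k(e_1)$ to $e_n$ with sub-exponential combinatorial complexity. This is the technical heart of the Handel--Mosher argument.
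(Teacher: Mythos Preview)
The paper does not prove this theorem. It is quoted verbatim as a result of Handel--Mosher \cite[Section~4.3, Case~1]{HM} and used as a black box; there is no argument in the paper to compare your proposal against.

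That said, a few remarks on your outline. Your upper bound via $f_k=\hat\Theta^k\tau\hat\Theta^{-k}$ is exactly the mechanism the paper exploits (in the handlebody group, with $\psi$ playing the role of $\hat\Theta$ and the handle slide $P$ the role of $\tau$), so that part is both correct and in line with how the paper uses the theorem. For the lower bound, your idea of a constrained outer space $CV_n^{[e_1]}$ with normalised $[e_1]$--length and the Lipschitz metric is a reasonable heuristic framing, and you correctly identify the crux: one must rule out alternative short paths inside $\Stab([e_1])$, not merely observe that the obvious $\hat\Theta$--axis leaves the constrained space. You do not actually carry this out --- you defer to ``the technical heart of the Handel--Mosher argument'' --- so your proposal is a plan rather than a proof. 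The genuine content (controlling all folding paths that keep $[e_1]$ short) is exactly what Handel--Mosher supply via their train-track machinery, and nothing in your sketch substitutes for it.
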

The second ingredient is a construction similar to the one employed in
Section~3 of \cite{HH2}.

Namely, let $X$ be a surface of genus $1$ with one boundary
component. Consider the $3$--manifold $W = X \times [0,1]$, which is a
handlebody of genus $2$. The boundary 
\[ \partial W = X^0 \cup A \cup X^1, \quad X^i = \{i\}\times X, A = \partial X \times [0,1] \] consists of two copies of $X$ and an annulus $A$. There is a map
\[ \iota:\mathrm{Mcg}(X) \to \mathcal{H}(W), \] 
which maps a homeomorphism $f$ of $X$ to the homeomorphism $f \times \mathrm{id}$ of $W$.

Choose $2(g-2)$ disjoint discs $D^-_i, D^+_i \subset \mathrm{int}(A)$, 
for $i=3,\ldots, g$ and for each $i$ attach a three-dimensional 
$1$-handle $h_i$ to $D_i^+, D_i^-$ to obtain a handlebody $V$ of genus $g$. 
Observe that homeomorphisms of $X$ which have the form $f\times\mathrm{id}$ 
restrict to the identity on the annulus $A$, and therefore the map $\iota$ yields
a map
\[ \iota':\mathrm{Mcg}(X) \to \mathcal{H}(V) \] 
by sending $f$ to the homeomorphism which restricts to $f\times\mathrm{id}$ on $W$,
and to the identity on all handles $h_i$.

\medskip Let $\alpha \subset X$ be a non-separating simple closed curve, and denote
by $\alpha^j = \alpha\times\{j\}$ for $j=0,1$. Observe that $\alpha^0, \alpha^1$ are homotopic
in $V$, and are primitive. Let $\beta^0 \subset(A\setminus \cup_i(D_i^+\cup D_i^+))\cup X^0$ be a simple closed curve which bounds a pair of pants together with $\partial D_g^-$ and
$\alpha^0$ (compare Figure~\ref{fig:pushing-setup}).  
\begin{figure}
  \centering
  \includegraphics[width=\textwidth]{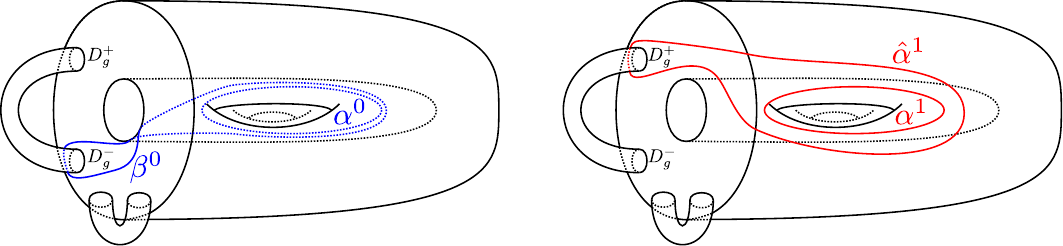}
  \caption{Left: The setup to construct distorted curve
    stabilisers. Right: A non-separating annulus fixed by the elements
    $f_k$. A separating annulus with the same property could be
    constructed by making $\hat{\alpha}^1$ surround both discs $D_g^-, D_g^+$ 
    where the lower handle is attached.}
  \label{fig:pushing-setup}
\end{figure} 
Consider the mapping class
\[ P = T_{\alpha^0}T_{\beta^0}^{-1} \in \mathcal{H}(V), \]
which is a handle slide.

Choose a basis $e_1, \ldots, e_g$ for $\pi_1(V)$, so that $e_1, e_2$
correspond to loops in $X^0$, the loop $\alpha^0$ defines the
conjugacy class of $e_1$, and the loops $e_3, \ldots, e_g$ are dual to
the handles $h_i$, not entering $X^0\cup
X^1$. 
We summarise some important properties of $P$ in the following lemma.
\begin{lemma}\label{lem:action-P}
  The element $P$ induces the following automorphism on $\pi_1(V)$ with respect to
  the basis chosen above:
  \[ e_i \mapsto e_i, i < g \]
  \[ e_g \mapsto e_g e_1. \] Furthermore, $P$ fixes the curve
  $\alpha^0$ and restricts to the identity on $X^1$.
\end{lemma}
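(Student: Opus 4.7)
The plan is to identify $P$ as a handle slide of the handle $h_g$ over the loop $\alpha^0$, and verify the claimed formulas by direct computation using the standard formula that a Dehn twist $T_\gamma$ acts on a loop $l$ meeting $\gamma$ transversely in one point by $l\mapsto l\cdot[\gamma]^{\pm 1}$ in $\pi_1$.

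The two topological claims follow immediately from the setup. Since $\alpha^0\subset X^0$ and $\beta^0\subset X^0\cup A$, the supports of both $T_{\alpha^0}$ and $T_{\beta^0}$ are disjoint from $X^1$, so $P|_{X^1}=\id$. Since $\alpha^0$ and $\beta^0$ are disjoint boundary components of the pair of pants, both twists preserve $\alpha^0$ as an isotopy class, and hence so does $P$.

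For the action on $\pi_1(V)$, the key algebraic input is that $[\beta^0]=e_1$ in $\pi_1(V)$: the pair of pants expresses $[\beta^0]=[\alpha^0]\cdot[\partial D_g^-]^{\pm 1}$ in $\pi_1(\partial V)$, and $[\partial D_g^-]=1$ in $\pi_1(V)$ because $\partial D_g^-$ bounds the meridian disc $D_g^-$. I would then choose representatives so that intersections with $\alpha^0\cup\beta^0$ are geometrically minimal. The generators $e_3,\ldots,e_{g-1}$ can be isotoped off the pair of pants (which avoids $D_i^\pm$ for $i\neq g$), hence are fixed by $P$. The generator $e_1=\alpha^0$ is clearly fixed. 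For $e_2$, the structure of the pair of pants forces the arc $\beta^0\cap X^0$ to be parallel inside $X^0$ to a portion of $\alpha^0$, so $e_2$ meets $\beta^0$ once in the same direction as it meets $\alpha^0$; this gives $T_{\alpha^0}(e_2)=e_2\, e_1=T_{\beta^0}(e_2)$, and the two contributions cancel in $P=T_{\alpha^0}T_{\beta^0}^{-1}$, yielding $P(e_2)=e_2$.

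The essential computation is for $e_g$. Since $e_g$ avoids $X^0$ by choice of basis, it is disjoint from $\alpha^0$, and $T_{\alpha^0}$ acts trivially on it. However, $e_g$ enters the handle through $D_g^-$, so its arc on $A$ must cross the pair of pants exactly once, and it exits through $\beta^0$ (it cannot exit through $\alpha^0\subset X^0$, which lies on the opposite side). This yields $T_{\beta^0}^{-1}(e_g)=e_g\cdot[\beta^0]^{\pm 1}=e_g\, e_1^{\pm 1}$, and with orientations chosen consistently $P(e_g)=e_g\, e_1$. The main bookkeeping challenge is fixing the orientations of $\alpha^0$, $\beta^0$ and the basis so that all signs come out as stated; conceptually nothing deep happens beyond reading off the handle slide directly from Figure~\ref{fig:pushing-setup}.
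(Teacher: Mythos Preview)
Your argument is correct. The paper does not actually prove this lemma: it simply identifies $P=T_{\alpha^0}T_{\beta^0}^{-1}$ as a handle slide and then states the lemma as a ``summary of properties'' of $P$, leaving the verification to the reader and the figure. Your write-up is precisely that verification --- checking supports to get $P|_{X^1}=\id$ and $P(\alpha^0)=\alpha^0$, using the pair-of-pants relation to see $[\beta^0]=[\alpha^0]=e_1$ in $\pi_1(V)$, and then reading off the effect on each basis element from its geometric intersection with $\alpha^0$ and $\beta^0$. The only points worth tightening are cosmetic: the relation among the three boundary curves of the pair of pants holds for based representatives (hence up to conjugacy in $\pi_1(V)$), and the claim that $e_3,\ldots,e_{g-1}$ miss the pair of pants is really a statement about the freedom in choosing these basis loops inside $A$ and the handles $h_i$, $i<g$. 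Neither affects the validity of the proof.
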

Let $\psi$ be a pseudo-Anosov element of $X$ which induces an
irreducible, exponentially growing automorphism $\Theta$ of $\pi_1(X) =
F_2$. Recall that $\Psi = \iota'(\psi)$ is then an element of the handlebody group of
$V$, which restricts to $\psi$ on $X$. Define the elements 
\[ f_k = \Psi^kP\Psi^{-k} \]
Each $f_k$ lies in the handlebody group, and we have the following
\begin{lemma}\label{lem:action-fk}
  The element $f_k$ induces the following automorphism on $\pi_1(V)$ with respect to
  the basis chosen above:
  \[ e_i \mapsto e_i, i < g \]
  \[ e_g \mapsto e_g\Theta^k(e_1). \] 
  Furthermore, each $f_k$ fixes
  \begin{enumerate}[i)]
  \item the curve $\alpha^1$,
  \item a non-separating annulus $A$, one boundary component of which is $\alpha^1$,
  \item for each $h=1, \ldots, g-2$, a
    separating annulus $A_h$, one boundary component of which is
    $\alpha^1$, and so that one complementary component of $A_h$ has genus $h$.
  \end{enumerate}
  Here, the annuli $A, A_h$ do not depend on $k$.
\end{lemma}
\begin{proof}
  The claim on the action on fundamental group is clear from Lemma~\ref{lem:action-P}.
  Also observe that since $P$ acts as the identity on $X^1$, the same is true for $f_k$.
  This immediately implies that the mapping class $f_k$ preserves the loop $\alpha^1 \subset
  X^1$.
  Furthermore, we can choose a curve $\hat{\alpha}^1$ which is disjoint
  from $\alpha^0, \beta^0$ and bounds an annulus $A$ together with
  $\alpha^1$ (compare Figure~\ref{fig:pushing-setup}). By choosing the $\partial D_i^\pm$ to lie on the correct
  side of $\hat{\alpha}^1, \alpha^1$, we can ensure that
  the annulus $A$ can be non-separating or separating, and in the latter
  case, we can choose the genus separated off freely between $1$ and
  $g-2$. 
\end{proof}
Now we are ready to prove the lower distortion parts of the theorems
mentioned at the beginning of this section.
\begin{proof}[Proof of the lower bound in Theorem~\ref{thm:primstab-distorted}]
The stabiliser of any primitive curve $\alpha$ is conjugate, in the
handlebody group, to the stabiliser of $\alpha^1$. Hence, it suffices to show
that the stabiliser of $\alpha^1$ is at least exponentially distorted. We
use the elements $f_k$ as above. 

Suppose that $f\in\mathrm{Stab}_{\mathcal{H}(V)}(\alpha^1)$ is given. Then,
since $\alpha^1$ (and $\alpha^0$) define the conjugacy class $[e_1]$ in 
$\pi_1(V)$, the induced automorphism $f_*\in \mathrm{Out}(\pi_1(V))$ fixes
the conjugacy class $[e_1]$. In other words, there is a group homomorphism
\[ \pi:\mathrm{Stab}_{\mathcal{H}(V)}(\alpha^1) \to
\mathrm{Stab}_{\mathrm{Out}(F_g)}([e_1]). \]
Recall that for any choice of word norms, group homomorphisms are Lipschitz 
maps. By Theorem~\ref{thm:hm-distortion}, the elements $\pi(f_k)$ have
norm growing exponentially in $k$. Hence, the sequence $f_k$
has norms growing at least 
exponentially in $\mathrm{Stab}_{\mathcal{H}(V)}(\alpha^1)$. On the other hand, as $f_k =
\Psi^kP\Psi^{k}$, the norm of $f_k$ in $\mathcal{H}(V)$ is clearly
growing linearly in $k$. This shows that
$\mathrm{Stab}_{\mathcal{H}(V)}(\alpha^1)$ is at least exponentially
distorted in $\mathcal{H}(V)$.
\end{proof}
\begin{proof}[Proof of the lower bound in Theorem~\ref{thm:annulus-distorted}]
  The stabiliser of any annulus as in that theorem is conjugate to an
  annulus $A$ or $A_h$ as in Lemma~\ref{lem:action-fk}. Now, we can
  finish the proof just like the previous argument. Namely, the
  elements $f_k$ as above fix $A, A_h$, and also the stabilisers of
  these annuli are contained in the stabiliser of $\alpha^1$.
\end{proof}
As mentionend in the introduction, the lower distortion bound in
Proposition~\ref{prop:cyclic-splitting-distorted} follows directly from \cite{HM}.
For completeness, we include a proof (from a topological perspective).
\begin{proof}[Proof of the lower bound in Proposition~\ref{prop:cyclic-splitting-distorted}]
  We use  the connection of $\mathrm{Out}(F_n)$ to the mapping class group
of a the double of a handlebody. Let $W$ be the closed $3$--manifold obtained by doubling
$V$ about its boundary. Recall the short exact sequence \cite[{Th\'{e}or\`{e}me 4.3, Remarque 1)}]{L74}
\[ 1 \to K \to \mathrm{Mcg}(W) \to \mathrm{Out}(F_n) \to 1 \] where
$K$ is finite, and the right map is induced by the action on the
fundamental group.  We also have a natural map $\mathcal{H}\to
\mathrm{Mcg}(W)$ obtained by doubling, so that the composition
$\mathcal{H}\to \mathrm{Mcg}(W)\to \mathrm{Out}(F_n)$ agrees with the
action on the fundamental group of the handlebody.

Under the doubling map $\mathcal{H}\to \mathrm{Mcg}(W)$, the
stabiliser of an annulus $A$ as above in $\mathcal{H}$ maps to the
stabiliser of a torus $T_A$, so that the image of $\pi_1(T_A)$ in
$\pi_1(W)$ is generated by $[e_1]$.  Under the map $\mathrm{Mcg}(W)
\to \mathrm{Out}(F_n)$ the stabiliser of $T_A$ maps to the stabiliser
of a primitive cyclic splitting $Z$, where the amalgamating group is generated
by $[e_1]$.  The same argument as above then shows that the image of
the sequence $f_k$ has length growing exponentially in $k$ in the
stabiliser of $Z$. As all primitive cyclic splittings differ by an element of
$\mathrm{Out}(F_n)$, this shows that all stabilisers of primitive cyclic splittings
are at least exponentially distorted.
\end{proof}

\newpage
\appendix
\section{The proof of Lemma~\ref{lem:stablisers-non-normal}}
We give the proof in the case of a genus $3$ handlebody, but the method extends
to any genus $\geq 3$.
\begin{figure}
	\centering
	\includegraphics[width=0.49\textwidth]{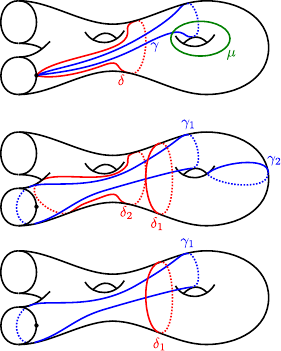}
	\includegraphics[width=0.49\textwidth]{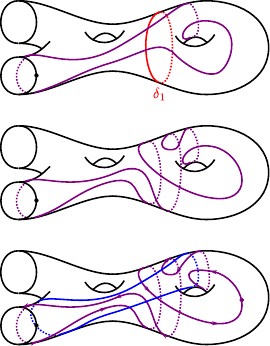}
        
        \vspace{1 em}
	\includegraphics[width=0.49\textwidth]{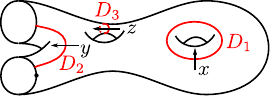}
	\caption{The left three pictures show the relevant curves in
		the proof of Lemma~\ref{lem:stablisers-non-normal}. For ease of depiction, in all of these pictures the
		handlebody structure is the ``outside'' handlebody in the standard
		Heegaard splitting of $S^3$.
		The three pictures on the right depict the action of the twist
		product on $\mu$: the top shows $T_{\gamma_1}\mu =
		T_{\gamma_1}T_{\delta_1}\mu$, the middle one shows
		$T_{\delta_1}^{-1}T_{\gamma_1}T_{\delta_1}\mu$ and in the bottom one
		$\gamma_1$ is shown superimposed.
              Below is the basis used to compute the element after applying all twists.}
	\label{fig:setup-example} 
      \end{figure} 

	Consider two disjoint loops $\gamma, \delta$ as in
	Figure~\ref{fig:setup-example} on the left, based at the curve
	$\alpha_-$, i.e. loops so that under the identification of $\hat{Y}$ with the
	splitting surface $S$, the loop $\delta$ is a separating meridian, while
	$\gamma$ is a primitive curve.  We will show that the boundary push
	about any element in the fibre of $\pi_1(U\hat{Y}) \to \pi_1(S)$
	over the commutator $[\delta,\gamma]$ is not contained in the handlebody
	group. This is enough to prove the lemma.
	
	
	\smallskip
	A push along $\delta$ will be of the form
	\[ P_\delta = T^{-1}_{\delta_1}T_{\delta_2}T_\alpha^k \] 
	for some $k$, where $\delta_1,
	\delta_2$ are disjoint from $\delta$ and bound a pair of pants together with $\alpha$ (see Figure~\ref{fig:setup-example} for this setup). Here and below, bounding
	a pair of pants, and twists are meant as objects on $S$, before cutting at $\alpha$. 
	Similarly, a push along
	$\gamma$ will be of the form 
	\[T^{-1}_{\gamma_1}T_{\gamma_2}T_\alpha^l\]
	for some $l$, and $\gamma_1, \gamma_2$ disjoint and bounding a pair of pants with $\alpha$. 
	
	Since $\delta_2, \gamma_2$ are disjoint from all
	other involved curves, and the corresponding Dehn twists therefore commute with all others
	involved in the definition of $P_\delta, P_\gamma$, we can compute the commutator of the pushes as
	\[ \Psi = [P_\gamma, P_\delta] = T_{\gamma_1}^{-1} T_{\delta_1}^{-1} T_{\gamma_1}
	T_{\delta_1}. \] 
	To prove the lemma, we therefore need to show that $T_\alpha^n\Psi$ is not in the handlebody group for any $n$. To show this claim, we will study the
	effect of $T_\alpha^n\Psi$ on a meridian $\mu$, which is disjoint from $\delta_1$ 
	and intersects $\gamma_2$ in a single point. First
	note that $\mu$ itself, as well as $\delta_1, \gamma_1$ are disjoint from
	$\alpha$, and so the image $T_\alpha^n\Psi(\mu)$ will not depend on $n$. Thus, we may assume
	$n=0$.
	
	The action of the first three twists is shown in
	Figure~\ref{fig:setup-example} on the right. Twists are executed right-to-left, $T_x$ is
	a left-handed twist about the curve $x$.
	
	Instead of actually performing the final twist, we can now
        determine the resulting word in $\pi_1(V)$ by recording
        intersections with a cut system as follows. We choose a cut
        system consisting of three discs $D_1, D_2, D_3$. Here, $D_1$
        is freely homotopic to $\mu$, $D_2$ intersects $\alpha$ in a
        single point, and $D_3$ is disjoint from all curves involved.
        Compare the bottom picture in Figure~\ref{fig:setup-example}.
	We also choose transverse orientations, so that the cut system
        defines an oriented basis $x,y,z$ of $\pi_1(V)$.

	To find the element which
        $T_{\gamma_1}^{-1}T_{\delta_1}^{-1}T_{\gamma_1}T_{\delta_1}\mu$
        defines in $\pi_1(V)$, we now follow along the purple curve,
        starting at the solidly drawn basepoint, turn right and follow
        $\gamma_1$ whenever we encounter $\gamma_1$, and keep track of
        intersections with $D_1, D_2$. With the transverse
        orientations as in Figure~\ref{fig:setup-example}, this yields the
        following word (for readability, intersections due to $\gamma_1$
	are bracketed):

        \[ x(x^{-1}y^{-1})(xy)y(y^{-1}x^{-1})(yx)(y^{-1}x^{-1}) =
        y^{-1}xyx^{-1}yxy^{-1}x^{-1} \] This is a nontrivial element
        in $\pi_1(V)$, and therefore
        $T_{\gamma_1}^{-1}T_{\delta_1}^{-1}T_{\gamma_1}T_{\delta_1}\mu$
        is not a meridian.

\bibliographystyle{math}
\bibliography{stabs}

\bigskip
\noindent Sebastian Hensel\\
Mathematisches Institut der Universität München\\
Theresienstraße 39, 80333 München\\
Email: hensel@math.lmu.de

\end{document}